\newcommand{\QQ}{{\mathbb{Q}}}
\newcommand{\QQbar}{{\overline{\mathbb{Q}}}}
\newcommand{\CC}{{\mathbb{C}}}
\newcommand{\HH}{{\mathbb{H}}}
\newcommand{\PP}{{\mathbb{P}}}
\newcommand{\RR}{{\mathbb{R}}}
\newcommand{\ZZ}{{\mathbb{Z}}}
\newcommand{\FF}{{\mathbb{F}}}
\newcommand{\TT}{{\mathbb{T}}}
\newcommand{\mm}{{\mathbf{m}}}
\newcommand{\dual}{\vee}
\newcommand{\goto}{\mapsto}
\newcommand{\compose}{\circ}
\newcommand{\st}{\Big\vert}
\newcommand{\dash}{-}
\newcommand{\old}{ {\mbox{old}}}
\newcommand{\new}{ {\mbox{new}}}
\newcommand{\isom}{\simeq}
\newcommand{\ndiv}{\nmid}
\newcommand{\tors}{\text{tors}}
\newcommand{\intersect}{\bigcap}
\DeclareMathOperator{\num}{Num}
\DeclareMathOperator{\alb}{Alb}
\DeclareMathOperator{\jac}{Jac}
\DeclareMathOperator{\End}{End}
\let\hom=\relax
\DeclareMathOperator{\hom}{Hom}
\DeclareMathOperator{\frob}{Frob}
\DeclareMathOperator{\ext}{Ext}
\DeclareMathOperator{\id}{Id}
\DeclareMathOperator{\trace}{trace}
\DeclareMathOperator{\Pic}{Pic}
\DeclareMathOperator{\op}{op}
\DeclareMathOperator{\ord}{ord}
\DeclareMathOperator{\aut}{Aut}
\DeclareMathOperator{\ann}{ann}
\DeclareMathOperator{\SL}{SL}
\DeclareMathOperator{\GL}{GL}
\DeclareMathOperator{\Gal}{Gal}
\newtheorem{thm}{Theorem}[section]
\newtheorem{lemma}[thm]{Lemma}
\newtheorem{prop}[thm]{Proposition}
\newtheorem{cor}[thm]{Corollary}
\theoremstyle{definition} \newtheorem{defn}[thm]{Definition}
\theoremstyle{remark} \newtheorem{remark}[thm]{Remark}
\theoremstyle{remark} 
\theoremstyle{remark} \newtheorem{conjecture}{Conjecture}
\begin{document}

\title{Modular Abelian Variety of Odd Modular Degree}
\author{Soroosh Yazdani}

\maketitle

\begin{abstract}
We will study modular Abelian varieties with odd congruence numbers, by
studying the cuspidal subgroup of $J_0(N)$. We show the conductor of such
Abelian varieties must be of a special type, for example if $N$ is odd
then $N=p^\alpha$ or $N=pq$ for some prime $p$ and $q$.
We then focus our attention to modular elliptic curves, and using result
of Agashe, Ribet, and Stein \cite{ARS}, we try to classify all elliptic
curves of odd modular degree. Our studies prove many cases of the Stein and
Watkins's conjecture on elliptic curves with odd modular degree.

\end{abstract}



\tableofcontents


\chapter{Preface}
\label{preface}
    After the work of Wiles, Taylor-Wiles, et al, we now know that all elliptic
curves over $\QQ$ are modular (see \cite{BCDT}), which implies that there is a map
$\pi:X_0(N) \rightarrow E$ defined over the rationals. As such, we have 
a new invariant attached to a given elliptic curve, namely the degree of
the modular uniformization $\pi$. This invariant is related to many other
invariants of an elliptic curve, for instance this number is closely related
to congruences between modular forms \cite{ARS}. Also we know that finding a good
bound on degree of $\pi$ in terms of $N$ is equivalent to the $ABC$ conjecture
\cite{MM}, \cite{Frey}.

After intense computer calculation, Watkins conjectured that 
$2$ to the power of the rank of $E(\QQ)$ divides the modular degree of
$E$. In particular, when the modular degree of $E$ is odd, then $E(\QQ)$ must
be finite.
Furthermore, Stein and Watkins have observed that such elliptic curves must 
have good reductions
away from at most two primes. The goal of this paper is to study
Stein and Watkins conjecture, and some generalization of their
conjecture to modular Abelian varieties.
As such we recall some definitions
and basic results with regards to modular Abelian varieties, congruences
between them, cuspidal subgroup, and elliptic
curves in chapter \ref{prelim}. In chapter \ref{chap2} we study 
general modular Abelian varieties with odd congruence numbers, and prove
many conditions they need to satisfy. Chapter \ref{chap3} is dedicated
to studying elliptic curves with odd modular degree, where we use results of
chapter \ref{chap2} and some diophantine equations to prove parts of
the conjecture of Stein and Watkins.

\chapter{Preliminaries}    
\label{prelim}
	\section{Jacobian Variety}
\label{sec1.1}
 In this section we study the natural inclusion of a curve into its Jacobian.
 For the purposes of this paper, we will present the Jacobian variety as a special
 case of the Albanese variety:
 \begin{defn}
	 For a given variety $V/k$, an Albanese variety $(\alb(V)/k,i)$ is a couple
	 consisting of an Abelian variety $\alb(V)/k$ and a rational map 
	 $i :V \rightarrow \alb(V)$ such that
	 \begin{enumerate}
		 \item image of $V$ under $i$ generates $\alb(V)$,
		 \item For every rational map $\pi:V \rightarrow B$ of $V$ into 
			 an Abelian variety $B$, there exists a homomorphism 
             $\pi_*:\alb(V) \rightarrow B$ and a constant $c(\pi) \in B(\overline{k})$
			 such that $\pi=\pi_*\circ i+c(\pi).$ 
	 \end{enumerate}
 \end{defn}
 To see the construction of this variety, and some of its properties, we refer the
 reader to \cite{langabvar}. 

 Note that if $p \in V(k)$ is a rational point of $V$, we can find an Albanese variety
 $(\alb(V),i)$ such that $i(p)=0$. We call this the canonical Albanese variety of
 $(V,p)$. 
 We denote $i$ by $i_V$ or $i_{V,p}$ whenever we want to emphasize $V$ and $p$.
 If $C/k$ is a curve and $p \in C(k)$ and $(\alb(C),i)$ is the canonical Albanese
 variety of $(C,p)$, then $\alb(C)$ is just the Jacobian of $C$ and we denote it
 by $\jac(C)$. We call the map $i_{C,p}$ the Albanese embedding of $C$ \cite{langabvar}.
 For a curve $C/k$,
 the Albanese map is easy to desribe. Specifically $i_{C,p}(z)=(z)-(p),$ and
 \begin{equation}
	 i_{C,q}(z)=i_{C,p}(z)+(p)-(q)=i_p(z)-i_p(q).
 \label{eq:2}
 \end{equation}

 Let $f:V \rightarrow W$ and let $(\alb(W),i_W)$ and $(\alb(V),i_V)$ be two 
 Albanese varieties of $V$ and $W$. Then we get a map 
 \[i_W \circ f:V \rightarrow \alb(W),\]
 which by definition of Albanese variety implies that we can find a map
 $(i_w \circ f)_*:\alb(V) \rightarrow \alb(W)$ and a constant $c(i_w\circ f)$
 such that $i_w\circ f=(i_w\circ f)_*\circ i_v + c(i_w \circ f).$
 We denote $(i_w\circ f)_*$ and $c(i_w \circ f)$ by $f_*$ and $c(f)$ respectively.
 Now let $p \in V(k)$ and let $(\alb(V),i_V)$ be the canonical Albanese variety of $(V,p)$.
 Furthermore, let $(\alb(W),i_W)$ be the canonical Albanese variety of $(W,f(p))$.
 Then we have the following commutative diagram:
 \begin{eqnarray}
 \xymatrix{
	 V \ar[r]^-{i_V} \ar[d]_f & \alb(V) \ar[d]^{f_*} \\
	 W \ar[r]^-{i_W} &  \alb(W).
 }
 \label{eq:1}
 \end{eqnarray}
 When $V=W=C$ we get the following
 \begin{prop}
	 Let $f:C \rightarrow C$ be a map of curves and $p\in C$, and let 
     $i=i_{C,p}:C \rightarrow \jac(C)$ be the canonical Albanese embedding.
	 Then there exists a map of varieties 
     $\alb(f): \jac(C) \rightarrow \jac(C)$
	 such that the following diagram
	 \[ 
	 \xymatrix{
		 C \ar[d]_f \ar[r]^-{i} & \jac(C) \ar[d]^{\alb(f)} \\
		 C \ar[r]^-{i} & \jac(C)
	 } \]
	 commutes. 
     Furthermore, if $A$ is an Abelian variety and 
     $\pi:\jac(C) \rightarrow A$ is a surjective map of Abelian
     varieties such that $f_*(\ker(\pi)) \subset \ker(\pi))$,
     then we can find a map (denoted by $f_*$ as well) 
     $A \rightarrow A$ such that
     the following diagram
	 \[ \xymatrix{
		 \jac(C) \ar[r]^-{\pi} \ar[d]_{f_*} & A \ar@{.>}[d] \\
		 \jac(C) \ar[r]^-{\pi} & A
	 } \]
     commutes.
     In this case, we can also construct a map $\alb(f):A \rightarrow A$ 
     of varieties such that the following diagram
	 \[ 
	 \xymatrix{
		 C \ar[d]_f \ar[r]^-{\pi \circ i} & A \ar@{.>}[d] \\
		 C \ar[r]^-{\pi \circ i} & A
	 } \]
	 commutes.
	 \label{albfunctor}
 \end{prop}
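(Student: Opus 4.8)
The plan is to deduce all three diagrams from two universal properties: that of the Albanese variety (part (2) of its definition) and that of the quotient in the category of abelian varieties.

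For the first square I would apply the defining property of $\jac(C)$ to the rational map $i \compose f : C \rightarrow \jac(C)$. Since $\jac(C)$ is itself an abelian variety, this produces a homomorphism $f_* := (i \compose f)_* : \jac(C) \rightarrow \jac(C)$ together with a constant $c := c(f) \in \jac(C)(\overline{k})$ satisfying $i \compose f = f_* \compose i + c$. I would then \emph{define} $\alb(f)$ to be the map of varieties $x \mapsto f_*(x) + c$, that is, the homomorphism $f_*$ followed by translation by $c$; by construction $\alb(f) \compose i = i \compose f$, which is the asserted commutativity. Evaluating at $p$ and using $i(p) = 0$ identifies $c$ with $i(f(p)) = (f(p)) - (p)$, so $\alb(f)$ is a homomorphism exactly when $f(p) = p$ and in general only a morphism of varieties, which is all the statement claims.

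For the second square the key point is that the surjective homomorphism $\pi : \jac(C) \rightarrow A$ exhibits $A$ as the quotient $\jac(C)/\ker(\pi)$, and hence enjoys the universal property of that quotient. The hypothesis $f_*(\ker(\pi)) \subset \ker(\pi)$ guarantees that the homomorphism $\pi \compose f_*$ annihilates $\ker(\pi)$, because $\pi(f_*(\ker(\pi))) \subseteq \pi(\ker(\pi)) = 0$; therefore $\pi \compose f_*$ factors uniquely through $\pi$, yielding the homomorphism $A \rightarrow A$ (denoted $f_*$ as well) that fills in the dotted arrow. Note that this $f_*$ is the homomorphism from the first step, not the morphism $\alb(f)$.

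The third square I would obtain by pushing the identity $\alb(f) \compose i = i \compose f$ forward along $\pi$. For $x \in \jac(C)$ we compute $\pi(\alb(f)(x)) = \pi(f_*(x)) + \pi(c) = f_*(\pi(x)) + \pi(c)$, where on the right $f_*$ denotes the descended homomorphism on $A$ from the previous step. Thus the morphism $A \rightarrow A$ given by $y \mapsto f_*(y) + \pi(c)$---again a translate of a homomorphism, hence a map of varieties, which we also call $\alb(f)$---satisfies $\alb(f) \compose \pi = \pi \compose \alb(f)$, and composing with $i$ gives $\alb(f) \compose (\pi \compose i) = \pi \compose i \compose f$, as required. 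I expect the only real subtleties to be the bookkeeping of the translation constant, so that $\alb(f)$ is consistently read as a map of varieties rather than a homomorphism, and the appeal in the second step to the universal property of $\jac(C)/\ker(\pi)$, for which the containment $f_*(\ker(\pi)) \subset \ker(\pi)$ is exactly the hypothesis needed.
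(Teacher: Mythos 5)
Your proposal is correct and follows essentially the same route as the paper: obtain $f_*$ from the Albanese universal property, set $\alb(f)=f_*+c$ with $c$ the translation constant (which you correctly identify as $i(f(p))$ by evaluating at $p$), descend $f_*$ to $A$ using $f_*(\ker\pi)\subset\ker\pi$, and take $f_*+\pi(c)$ for the last square. The only cosmetic difference is that the paper gets the constant by comparing the two Albanese embeddings $i_{C,p}$ and $i_{C,f(p)}$, whereas you evaluate the universal-property identity at $p$ directly; both amount to the same bookkeeping.
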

 \begin{proof}
     Note that by diagram \ref{eq:1} we can find
     $f_*$ such that the following diagram
     \begin{eqnarray}
     \xymatrix{
         C \ar[r]^-{i_{C,p}} \ar[d]_f & \jac(C) \ar[d]^{f_*} \\
         C \ar[r]^-{i_{C,f(p)}} &  \jac(W)
     }
     \end{eqnarray}
     commutes. However, $i_{C,p}(z)=i_{C,f(p)}(z)-i_{C,f(p)}(p)$
     which means that 
     \[ \alb(f)(z)=f_*(z)+i_{C,f(p)}(p)=f_*(z)+c(f) \] is
     the desired map. 
     To prove the second statement, for any $\alpha \in A$
     let $z \in \jac(C)$ such that $\pi(z)=\alpha$,
     and let $f_*(\alpha)=\pi(f_*(z)).$ This is independant
     of choice of $z$ by our assumption that $\ker(\pi)$
     is invariant under $f_*$.
     Finally, the last statement follows by 
	 noting that $f_*+\pi(c(f)):A \rightarrow A$ will make the 
	 above diagram commute.
 \end{proof}

 Let $G$ be a finite group that acts on the curve $C$. Then we can induce
 an action on $\jac(C)$ in two different way: For any $g\in G$ we have 
 a map $g_* : \jac(C) \rightarrow \jac(C)$ and 
 $\alb(g):\jac(C) \rightarrow \jac(C).$ We call the first one the 
 covariant action and the second one the Albanese induced action.
 Let $\pi: \jac(C) \rightarrow A$ be an Abelian variety quotient of $\jac(C)$ 
 such that $\ker(\pi)$ is invariant under $G$.
 Then, by \ref{albfunctor}, for all $g\in G$ we can construct 
 $\alb(g):A \rightarrow A$
 such that the following
 \begin{equation*}
	 \xymatrix{
	 C \ar[r] \ar[d]_g & A \ar[d]^{\alb(g)} \\
	 C \ar[r] &A
	 }
 \end{equation*}
 commutes.
 We have the following useful proposition:
 \begin{prop}
 	Let $G$ act on $C$. Let $\pi:\jac(C) \rightarrow A$ be a quotient of $\jac(C)$.
	Assume that the Albanese induced action exists and is trivial on $A$. 
	Then
	$\pi \circ i$ factors through $C \rightarrow C/G \rightarrow \jac(C/G).$
	\label{albquotient}
 \end{prop}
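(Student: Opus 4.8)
The plan is to show that triviality of the Albanese induced action forces the composite $\pi \circ i \colon C \to A$ to be constant on $G$-orbits, and then to descend this map first to the quotient curve $C/G$ and finally to its Jacobian, using the universal property already recorded in Proposition \ref{albfunctor}.

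First I would unwind what triviality of the Albanese action means. By the construction preceding this proposition, for each $g \in G$ there is a commutative square whose horizontal arrows are $\pi \circ i$, whose left vertical arrow is $g \colon C \to C$, and whose right vertical arrow is $\alb(g) \colon A \to A$; commutativity reads $(\pi \circ i) \circ g = \alb(g) \circ (\pi \circ i)$. The hypothesis that $\alb(g) = \id$ for every $g$ then yields $(\pi \circ i) \circ g = \pi \circ i$, that is, $\pi \circ i$ is $G$-invariant.

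Next I would descend $\pi \circ i$ to $C/G$. Since $C$ is a curve and $G$ is finite, the geometric quotient $q \colon C \to C/G$ exists as a curve and enjoys the universal property that every $G$-invariant morphism out of $C$ factors uniquely through $q$. Applying this to the $G$-invariant morphism $\pi \circ i$ produces a morphism $\bar\phi \colon C/G \to A$ with $\pi \circ i = \bar\phi \circ q$. Finally, $\bar\phi$ is a morphism from the curve $C/G$ into the abelian variety $A$, and because $i(p)=0$ and $\pi$ is a homomorphism we have $(\pi\circ i)(p)=0$, whence $\bar\phi(q(p))=0$. Taking $q(p)$ as the base point of the Albanese variety of $C/G$ and invoking its universal property exactly as in Proposition \ref{albfunctor}, I obtain a homomorphism $\psi \colon \jac(C/G) \to A$ with $\bar\phi = \psi \circ i_{C/G}$, the translation constant vanishing precisely because both maps send the base point to $0$. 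Composing with $q$ gives $\pi \circ i = \psi \circ i_{C/G} \circ q$, which is the asserted factorization through $C \to C/G \to \jac(C/G)$.

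The steps are formal once invariance is in hand; the only point requiring genuine care is the descent in the middle step, namely checking that a $G$-invariant morphism from $C$ to $A$ really factors through $C/G$ as a morphism of varieties rather than merely on closed points. The main obstacle is thus ensuring that $C/G$ has the needed categorical quotient property, and that $\pi \circ i$, a priori a rational map, is in fact a morphism so that invariance spreads cleanly to the quotient; for a finite group acting on a smooth projective curve this is standard, but it is where the real content lies.
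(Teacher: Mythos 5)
Your proposal is correct and follows essentially the same route as the paper's own proof: triviality of $\alb(g)$ gives $G$-invariance of $\pi\circ i$, which descends to $C/G$, and the Albanese (universal) property of $\jac(C/G)$ then supplies the final factorization. You simply spell out the descent and base-point details that the paper leaves implicit.
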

 \begin{proof}
 	Since the Albanese induced action is trivial on $A$, we have for any $g \in G$
    the following diagram commutes.
	\[ \xymatrix{
		C \ar[rr]^g \ar[rd] & & C \ar[ld] \\
		  & A &
	} \]
	Therefore, the map $\pi \circ i$ factors through $C \rightarrow C/G$. By the Albanese property
	of the Jacobian variety, we get that $\pi \circ i$ factors through $C \rightarrow C/G \rightarrow \jac(C/G)$,
	which is the desired result.
 \end{proof}

 Given an Abelian variety $A$, we can construct a dual Abelian variety $A^\dual=\Pic^0(A)$.
 We know that $A^\dual$ is isogeneous to $A$. We also know that $\End(A) \isom \End(A^\dual)^{\op},$
 via the map $\phi \in \End(A)$ going to $\phi^\dual$. If $\End(A)$ is a 
 commutative ring then $\End(A)^\dual \isom \End(A)$.
 Let $I$ be an ideal in $\End(A)$ and let
 \[ A[I] = \bigcap_{\phi \in I} \ker(\phi).\]
 The following proposition is well known.
 \begin{prop}
     Let $A$ be a simple Abelian variety such that $R=\End(A)$ is a commutative
     ring.
     If $I$ is not the unit ideal then $A[I]\neq 0$.
 \end{prop}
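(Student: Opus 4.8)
The plan is to reduce to a maximal ideal and then produce an explicit nonzero torsion point by a module-theoretic (socle) argument. First I would record the structure of $R$. Since $A$ is simple, $\End^0(A)=R\otimes\QQ$ is a division algebra; as it is commutative by hypothesis, it is a number field $K$, and therefore $R$ is an order in $K$ --- in particular a Noetherian integral domain of Krull dimension one that is finitely generated as a $\ZZ$-module. If $I=(0)$ then $A[I]=\ker(0)=A\neq 0$, so from now on I assume $I$ is a nonzero proper ideal.

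Second, I would reduce to the maximal case. Any proper ideal is contained in some maximal ideal $\mm\supseteq I$, and since enlarging the index set shrinks the intersection we get $A[\mm]=\bigcap_{\phi\in\mm}\ker(\phi)\subseteq\bigcap_{\phi\in I}\ker(\phi)=A[I]$; hence it suffices to prove $A[\mm]\neq 0$. Because $R$ is an order, $\mm$ is nonzero and meets $\ZZ$ in a prime ideal $(p)$, and $R/\mm$ is a finite field. Consequently $p\in\mm$ forces $A[\mm]\subseteq A[p]$, and $A[\mm]$ is precisely the submodule of $A[p]$ annihilated by $\mm$.

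The core step is to show that $A[p]$ is a \emph{faithful} module over the finite commutative ring $\bar R:=R/pR$. If $\psi\in R$ annihilates $A[p]$, then $\ker([p])=A[p]\subseteq\ker(\psi)$, so by the universal property of the isogeny $[p]\colon A\to A$ the map $\psi$ factors as $\psi=p\psi'$ with $\psi'\in\End(A)=R$; thus $\psi\in pR$ and the $\bar R$-action is faithful. Now $\bar R$ is Artinian, so it splits as a product $\prod_i\bar R_i$ of local rings, and faithfulness forces each component $e_iA[p]$ to be nonzero. The image $\bar\mm$ of $\mm$ in $\bar R$ is the maximal ideal of exactly one factor $\bar R_{i_0}$, and a short computation identifies $A[\mm]$ with the socle of the nonzero $\bar R_{i_0}$-module $e_{i_0}A[p]$; since a nonzero module over a local Artinian ring has nonzero socle, $A[\mm]\neq 0$, and therefore $A[I]\neq 0$.

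The hard part will be the subtlety in characteristic $p$: the faithfulness and socle arguments are cleanest on $\overline{k}$-points when $p\neq\mathrm{char}\,k$, which is automatic in the char-zero setting of $J_0(N)$ that concerns us. In positive characteristic $A[p](\overline{k})$ can be too small, and one should instead run the same argument at the level of the finite group scheme $A[\mm]$ (equivalently, via the $\ell$-adic Tate module for a prime $\ell\in\mm$ different from $\mathrm{char}\,k$) to guarantee that $A[p]$ is genuinely nonzero and faithful over $\bar R$.
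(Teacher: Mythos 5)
Your proof is correct, and it reaches the conclusion by a genuinely different route than the paper. The paper also passes to the Artinian ring $R/pR$, but it then argues on the ideal side: it produces an idempotent ideal $J=(I/pR)^n$ with $J^2=J$ to conclude $I^n\subseteq pR$, deduces $A[I^n]\supseteq A[p]\neq 0$, and finally descends from $A[I^n]$ to $A[I]$ by repeatedly applying elements of $I$ to a nonzero point and stopping at the last stage where the image is nonzero --- which is your socle argument run by hand, without naming the module structure. You instead reduce to a maximal ideal $\mm\supseteq I$ (a clean step the paper skips) and prove the stronger structural fact that $A[p]$ is a \emph{faithful} $R/pR$-module, via the factorization $\psi=p\psi'$ of any endomorphism killing $A[p]$; the paper never isolates faithfulness and only uses $A[p]\neq 0$. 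Your route buys a sharper statement ($A[\mm]$ is exactly the socle of the relevant local factor of $A[p]$) and makes transparent where commutativity and the order structure enter; the paper's route avoids the factorization lemma, at the cost of the claim $I^n\subseteq pR$, which as written requires the prime $p$ to be chosen so that $I+pR$ is proper (i.e.\ $p$ divides a generator of $I\cap\ZZ$). One small caution on your final remark: the only rational prime lying in $\mm$ is $p$ itself, so ``choose $\ell\in\mm$ different from $\mathrm{char}\,k$'' is vacuous; in positive characteristic the correct fix is to run the whole argument on the finite group scheme $A[p]$ rather than on $\overline{k}$-points. This is harmless here, since the paper only applies the proposition to quotients of $J_0(N)$ over $\QQ$.
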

 \begin{proof}
     Since $R$ is finite over $\ZZ$ and has no zero divisors, $R$ is just an
     order in a number field. 
     Assume that $A[I]=0$. If $I$ is the zero ideal, then $A[I]=A$, so assume
     that $I$ is a non-zero ideal.
     Let $p \in I$ be a prime integer in $I$. Then we have $I/pR \subset R/pR$.
     Since $R/pR$ is an Artinian ring, we get that for some integer $n$
     and $J=(I/pR)^n$ we have that $J^2=J$. This implies that if $I$
     is not the unit ideal, then $I^n \subset pR$. Therefore 
     $A[p] \subset A[I^n]$, which implies that $A[I^n] \neq 0$. Let 
     $0 \neq P \in A[I^n]$. Then for any $\phi,\psi \in I$ we have that
     $\psi(\phi^{n-1}(P))=0$. Let $m$ be such that $\psi(\phi^{m}(P))=0$ for all 
     $\psi, \phi \in I$, but for some $\phi' \in I$ we have $\psi(\phi^{m-1}(P)) \neq 0$ 
     for any $\psi' \in I$. Let $Q=\phi'^{m}(P)$. We know that $Q \neq 0$ since we can 
     let $\psi'=\phi'$. On the other hand by our first assumption $\psi(Q)=0$ for any 
     $\psi \in I$. Therefore $0 \neq Q \in A[I]$.
 \end{proof}
 \begin{cor}
     If $A$ is a simple Abelian variety such that $R=\End(A)$ is commutative
     ring and $A[I] \neq 0$ then $A^\dual[I] \neq 0$.
 \end{cor}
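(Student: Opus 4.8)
The plan is to upgrade the preceding proposition to an equivalence and then apply it to $A^\dual$. First I would record the immediate converse of that proposition: if $I=R$ is the unit ideal, then $\id \in I$, so $A[I]=\bigcap_{\phi \in I}\ker(\phi) \subseteq \ker(\id)=0$. Combining this observation with the proposition, I obtain the characterization that, for a simple Abelian variety $A$ whose endomorphism ring $R=\End(A)$ is commutative, one has $A[I]\neq 0$ if and only if $I$ is a proper ideal of $R$. Thus the geometric hypothesis $A[I]\neq 0$ is equivalent to the purely algebraic statement $I \neq R$.

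Next I would verify that $A^\dual$ satisfies exactly the hypotheses of the proposition. Since $A^\dual$ is isogenous to $A$ and simplicity is an isogeny invariant, $A^\dual$ is again simple. Moreover, from the recalled isomorphism $\End(A^\dual) \isom \End(A)^{\op}=R^{\op}$ together with the commutativity of $R$ (so that $R^{\op}=R$), we get $\End(A^\dual)\isom R$, which is again commutative. Hence the characterization above applies verbatim with $A$ replaced by $A^\dual$.

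Finally I would transport the ideal $I$ across the duality. The isomorphism $\phi \mapsto \phi^\dual$ carries $I$ to an ideal $I^\dual \subseteq \End(A^\dual)$, and being a ring isomorphism it sends proper ideals to proper ideals, so $I \neq R$ forces $I^\dual \neq \End(A^\dual)$. Since $A^\dual[I]$ means $\bigcap_{\phi \in I}\ker(\phi^\dual)=A^\dual[I^\dual]$, applying the characterization to $A^\dual$ gives $A^\dual[I]\neq 0$. Chaining the steps, $A[I]\neq 0 \Rightarrow I \neq R \Rightarrow I^\dual \neq \End(A^\dual) \Rightarrow A^\dual[I]\neq 0$, which is the claim. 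The only genuinely delicate point is the bookkeeping in this last step: one must interpret the symbol $A^\dual[I]$ through the canonical anti-isomorphism $\phi \mapsto \phi^\dual$; but because $R$ is commutative this anti-isomorphism is an honest ring isomorphism and preserves properness of ideals, so no real difficulty arises and the corollary is a short consequence of the proposition applied to both $A$ and $A^\dual$.
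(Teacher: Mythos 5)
Your argument is correct and follows essentially the same route as the paper: observe that $A[I]\neq 0$ forces $I$ to be a proper ideal (since $\ker(\id)=0$), transport $I$ to a proper ideal of $\End(A^\dual)\isom R$, and apply the preceding proposition to the simple Abelian variety $A^\dual$. Your write-up merely makes explicit the bookkeeping (simplicity of $A^\dual$, commutativity turning the anti-isomorphism into an isomorphism) that the paper leaves implicit.
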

 \begin{proof}
     If $A[I] \neq 0$, then $I$ is not the unit ideal (since $\ker(\id)=0$,)
     which implies that $I$ as an ideal of $\End(A^\dual)$ is not a unit
     ideal. Using proposition above we get that $A^\dual[I] \neq 0.$
 \end{proof}

\section{Modular Curves}
Let $X_0(N)$ be the moduli space of pairs $(E,C_N)$, where
$E$ is a generalized elliptic curve and $C_N$ is a cyclic subgroup of
order $N$. It turns out that $X_0(N)$ is in fact a curve, and there is
a smooth model of $X_0(N)$ over $\ZZ[1/N]$. Furthermore one has 
the complex curve $\overline{\HH}/\Gamma_0(N)=X_0(N)(\CC)$,
where $\HH$ is the complex upper half plane, $\overline{\HH}=\HH \cup \PP^1(\QQ)$, and 
\[
\Gamma_0(N) = \left\{ \begin{pmatrix} a & b \\ c & d \end{pmatrix} \st ad-bc=\pm 1, N|c \right\} \subset \SL_2(\ZZ).
\]
Here $\begin{pmatrix} a & b \\ c & d \end{pmatrix} z = {az+b \over cz +d }.$
The points of $X_0(N)(\CC)$ that are in the image of $\PP^1(\QQ)$ are called
the cusps of $X_0(N)$, so the cusps are in correspondence to $\PP^1(\QQ)/\Gamma_0(N)$.
(See \cite{DiamondIm} for the proof of above claims.)

Given $r | N$ such that $(N/r,r)=1$, we can decompose $C_N=C_r \times C_{N/r}$.
Therefore, we can find natural map 
$\alpha_{N,r}:X_0(N) \rightarrow X_0(r)$, where we just forget
about $C_{N/r}$, that is $(E,C_N) \goto (E,C_r)$. We also define
the degeneracy map $\beta_{N,r}:X_0(N) \rightarrow X_0(r)$ where
$(E,C_N) \goto (E/C_r, E[r]/C_r).$
We usually drop the subscript $N$ and $r$ from the notation, that
is we denote $\alpha, \beta$, 
to mean $\alpha_{N,r}, \beta_{N,r}$ respectively.
\begin{remark}
    If $N$, $a$, and $b$ are pairwise relatively prime, then we get
    the following diagram
    \[ \xymatrix{
        X_0(abN) \ar[r]^f \ar[d]^g &  X_0(aN) \ar[d]^g \\
        X_0(bN) \ar[r]^f & X_0(N)
    } \]
    commutes, where $f$ and $g$ are either $\alpha$ or $\beta$. 
    As such we get that
    these degeneracy maps commute with each other, whenever we are
    dealing with numbers relatively prime to each other.
    \label{DegeneracyCommute}
\end{remark}
When $N$ is square free
we denote the cusps of $X_0(N)$ by
$P_r \in X_0(N)$, indexed by $r | N$, in such a way that $P_r$ is
unramified under the degeneracy map $\alpha_{N,r}:X_0(N) \rightarrow X_0(r)$. 
Recall that $X_0(N)(\CC)$ is isomorphic to $\overline{\HH}/\Gamma_0(N)$.
Under this isomorphism, the cusp $P_r$ corresponds to the rational
number $r/N$. (So $P_1={1\over N} \equiv i\infty$.)

We let $J_0(N)=\jac(X_0(N))$ to be the Jacobian of the modular curve
$X_0(N)$. The maps $\alpha$ and $\beta$ induce maps on the respective
Jacobians. Specifically
\begin{eqnarray*}
 (\alpha_{N,r})_*, (\beta_{N,r})_* : J_0(N) \rightarrow J_0(r), \\
 (\alpha_{N,r})^*, (\beta_{N,r})^* : J_0(r) \rightarrow J_0(N).
\end{eqnarray*}
For $M$ an integer and $p$ a prime such that $p \ndiv M$ we define
$J_0(pM)_{p\mbox{-old}}$ to be the image of $J_0(M)$ in $J_0(pM)$ under
$(\alpha_{pM,M})^*$ and $(\beta_{pM,M})^*$, that is
\[ J_0(pM)_{p\mbox{-old}}=(\alpha_{pM,M})^*(J_0(M))+(\beta_{pM,M}))_*(J_0(M)).\]
Furthermore define $J_0(Mr)_{r\mbox{-old}}=\sum_{p | r} J_0(Mr)_{p\mbox{-old}}.$
Similarly we define 
\[ J_0(pM)_{p\mbox{-new}}=\ker( (\alpha_{pM,M})_*)+\ker( (\beta_{pM,M})_*),\]
and $J_0(Mr)_{r\mbox{-new}}=\bigcap_{p | r} J_0(Mr)_{p\mbox{-new}}.$
Also we define the $J_0(Mr)^{r\mbox{-old}}=J_0(Mr)/J_0(Mr)_{r\mbox{-new}}$ and
$J_0(Mr)^{r\mbox{-new}}=J_0(Mr)/J_0(Mr)_{r\mbox{-old}}.$ Finally let $J_0(N)_{\new}=J_0(N)_{N\mbox{-new}},$
and similarly for $J_0(N)^{\new}$, $J_0(N)_{\old}$, and $J_0(N)^{\old}.$

\subsection{Cuspidal Subgroup}
Let $N$ be a square free integer. We have the following
\begin{defn}
    The {\em cuspidal subgroup} of $J_0(N)$ is the subgroup $C \subset J_0(N)$
    generated by elements $P_r-P_1$ for $r |N$.
    \label{CuspidalDefn}
\end{defn}
In this subsection we study the order of elements in this group, and
calculate this order for a certain elements in the cuspidal subgroup.
These elements will later be used in finding congruences between
modular Abelian varieties (see section \ref{sec23}).

Recall the Dedekind's eta function is defined as
$$\eta(\tau)=q^{1/24}\prod_{n=1}^\infty (1-q^n)$$ where $q=e^{2\pi i \tau}.$
We also denote $\eta(M\tau)$ by $\eta_M(\tau)$.
Note that the $\eta$ has a zero of order $1/24$ at the cusps of 
$\overline{\HH}$, and away from the cusps
it is holomorphic and nonvanishing. 
We use $\eta_M$ to construct 
functions with divisors supported on the cusps. 
In particular $\eta_M$ has a zero of order 
\begin{equation}
    {1\over 24} {N d'^2 \over dt M},
    \label{CuspIsom1}
\end{equation}
at the cusp corresponding to $x/d \in \HH$,
where $d'=\gcd(d,M)$ and $t=\gcd(d,N/d)$ (see for example \cite{Ogg74}). 
Let ${\mathbf r}=(r_\delta)$ be a family of rational numbers $r_\delta \in \QQ$
indexed by the positive divisors of $\delta | N$.  Then the divisor of
function $g_{\mathbf r}=\prod_{\delta | N} \eta_\delta^{r_\delta}$ is 
supported on the cusps and we can calculate this divisor explicitly.
Note that
$R=\left\{ g_{\mathbf r} \st r_\delta \in \QQ \right\}$ forms a vector space
of dimension $2^t$ under multiplication, with basis $\eta_\delta$. The
discussion above gives us an isomorphism between $R$ and the 
rational vector space generated by the cusps of $X_0(N)$, call it $S$. 
We will give a more managable description of this isomorphism when 
$N=p_1p_2\dots p_t$ is square free.
First we
will define an explicit isomorphism between $R$ and
$V_1 \otimes V_2 \otimes \cdots \otimes V_t$ where each $V_i$ is $2$
dimensional with a chosen basis $e_{i,0}$ and $e_{i,1}$. 
Specifically $e_{1,k_1} \otimes e_{2,k_2} \otimes \cdots \otimes e_{t,k_t}$
is mapped to $\eta_\delta$ where $\delta=p_1^{k_1}\dots p_t^{k_t}$. 
Similarly we have that $S$ is isomorphic to 
$W_1 \otimes W_2 \otimes \cdots \otimes W_t$ where $W_i$ is generated by
$f_{i,0}$ and $f_{i,1}$ and $\otimes f_{i,k_i}$ is mapped to
$P_{p_1^{k_1}\dots p_t^{k^t}}.$
Now define $\Lambda_k : V_k \rightarrow W_k$ to
be $\Lambda_k(e_{k,\sigma})=p_k^\sigma f_{k,1}+p_k^{1-\sigma} f_{k,0},$
that is $\Lambda_k=\begin{pmatrix} p_k & 1 \\ 1 & p_k \end{pmatrix}.$
Let
\begin{equation}
    \begin{array}{rcl}
        \Lambda: V_1 \otimes \cdots \otimes V_t &\rightarrow& W_1 \otimes \cdots \otimes W_t, \\
        v_1 \otimes \cdots \otimes v_t & \goto & {1 \over 24} \Lambda_1(v_1) \otimes \cdots \otimes \Lambda_t(v_t).
    \end{array}
    \label{CuspIsom2}
\end{equation}
Note that $\Lambda^{-1}=24 \bigotimes_{i=1}^t \Lambda_i^{-1}$.

Our main tool is the following
\begin{prop}
    Let ${\bf r}=(r_\delta)$ be a family of rational number $r_\delta \in \QQ$
    indexed by all the positive divisors of $\delta | N$. Then the function
    $g_{\bf r}=\prod_{\delta | N} \eta_\delta^{r_\delta}$ is a modular function
    on $X_0(N)$ if and only if the following conditions are satisfied:
    \begin{enumerate}
        \item all the $r_\delta$ are rational integers;
        \item $\sum_{\delta | N} r_\delta \delta \equiv 0 \pmod {24}$;
        \item $\sum_{\delta | N} r_\delta{N \over \delta} \equiv 0 \pmod {24}$ ;
        \item $\sum_{\delta | N} r_\delta=0$; 
        \item $\prod_{\delta | N} \delta^{r_\delta}$ is a square of a rational number.
    \end{enumerate}
    \label{ligozatprop}
\end{prop}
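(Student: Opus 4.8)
The plan is to reduce the statement to Dedekind's functional equation for $\eta$ and then read off each of the five conditions from the behaviour at the cusps together with the triviality of the resulting multiplier. Since every $\eta_\delta$ is holomorphic and non-vanishing on $\HH$, so is $g_{\bf r}$; hence $g_{\bf r}$ is a modular function on $X_0(N)$ precisely when (a) it is invariant under $\Gamma_0(N)$ with trivial automorphy factor (i.e.\ weight $0$), and (b) it is meromorphic at every cusp. The weight of $g_{\bf r}$ is $\tfrac12\sum_\delta r_\delta$, so demanding weight $0$ gives condition (4) immediately; the remaining work is to extract (1), (2), (3), (5) from (a) and (b).

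For the invariance I would begin from $\eta(\gamma\tau)=\varepsilon(\gamma)(c\tau+d)^{1/2}\eta(\tau)$ for $\gamma=\begin{pmatrix} a&b\\ c&d\end{pmatrix}\in\SL_2(\ZZ)$, where $\varepsilon(\gamma)$ is the explicit $24$th root of unity given by a Dedekind sum. The key device is that for $\gamma\in\Gamma_0(N)$ and $\delta\mid N$ one has $\delta\,(\gamma\tau)=\gamma_\delta(\delta\tau)$, where $\gamma_\delta=\begin{pmatrix} a & b\delta\\ c/\delta & d\end{pmatrix}$ is integral (because $\delta\mid N\mid c$) and of determinant $1$. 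Substituting gives
\[ g_{\bf r}(\gamma\tau)=\Big(\prod_{\delta\mid N}\varepsilon(\gamma_\delta)^{r_\delta}\Big)(c\tau+d)^{\frac12\sum_\delta r_\delta}\,g_{\bf r}(\tau), \]
so, once (4) holds, invariance under $\Gamma_0(N)$ is equivalent to $\prod_\delta\varepsilon(\gamma_\delta)^{r_\delta}=1$ for all $\gamma\in\Gamma_0(N)$.

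Next I would evaluate this multiplier. Testing the parabolic generator $\begin{pmatrix}1&1\\0&1\end{pmatrix}$ produces the factor $e^{2\pi i\sum_\delta \delta r_\delta/24}$, which is exactly the statement that the order of $g_{\bf r}$ at $P_1\equiv i\infty$ (the leading $q$-power in \eqref{CuspIsom1}) is integral; its triviality is condition (2). Applying the Fricke involution $w_N\colon\tau\mapsto-1/(N\tau)$, which sends $i\infty$ to the cusp $0$, and reading the leading power there gives $\sum_\delta (N/\delta)r_\delta/24$, whence condition (3). With (2) and (3) in force, the Dedekind-sum expression for the general multiplier $\prod_\delta\varepsilon(\gamma_\delta)^{r_\delta}$ collapses to the Kronecker symbol $\big(\tfrac{\prod_\delta\delta^{r_\delta}}{d}\big)$; this quadratic character is trivial for every admissible $d$ if and only if $\prod_\delta\delta^{r_\delta}$ is a perfect square, which is condition (5). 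This reduction — separating the product of Dedekind-sum roots of unity into a $24$th-root part governed by (2) and (3) and a quadratic-residue part governed by (5), via Dedekind-sum reciprocity — is the technical heart of the argument and the step I expect to be the main obstacle, being precisely the content of Ligozat's theorem.

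Finally I would handle the integrality condition (1) and meromorphy at all cusps. In the ``if'' direction, integrality of the $r_\delta$ makes each $\eta_\delta^{r_\delta}$ single-valued and, combined with (2) and (3), makes the order of $g_{\bf r}$ at every cusp an integer through the formula \eqref{CuspIsom1} and the isomorphism $\Lambda$ of \eqref{CuspIsom2}, so $g_{\bf r}$ is genuinely meromorphic on $X_0(N)$. For necessity, I would argue that if $g_{\bf r}$ descends to a meromorphic function on $X_0(N)$, then at each cusp its expansion is a Laurent series in the local uniformizer; forcing the residual multiplier $\prod_\delta\varepsilon(\gamma_\delta)^{r_\delta}$ to be a genuine root of unity of order dividing $24$, rather than an arbitrary complex number of modulus one, pins the exponents $r_\delta$ down to $\ZZ$. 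I expect this single-valuedness bookkeeping to be the subtlest point after the Dedekind-sum computation itself.
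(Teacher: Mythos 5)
The paper gives no proof of this proposition at all---it defers entirely to \cite{Ligozat75}---so there is no internal argument to measure yours against. Your outline follows the standard (indeed Ligozat's own) route, and the steps you actually carry out are correct: the conjugation identity $\delta(\gamma\tau)=\gamma_\delta(\delta\tau)$ with $\gamma_\delta$ integral of determinant one, the extraction of condition (4) from the weight, of condition (2) from the parabolic generator, and of condition (3) from the Fricke involution are all sound.

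As a proof, however, the write-up has two genuine gaps, both at the points you yourself flag as the expected obstacles. First, the collapse of the residual multiplier $\prod_\delta\varepsilon(\gamma_\delta)^{r_\delta}$ to the Kronecker symbol $\left(\frac{\prod_\delta\delta^{r_\delta}}{d}\right)$ is asserted rather than derived; this is the entire content of the Dedekind-sum computation (one must insert the explicit formula for $\varepsilon$ in terms of $s(d,c)$, apply reciprocity, and track the $24$th roots of unity, verifying in particular that the residual sign involving $(-1)^{(\sum_\delta r_\delta)/2}$ is killed by condition (4)). Second, the necessity of condition (1) is not actually established: since $\HH$ is simply connected and $\eta$ is non-vanishing there, $\eta_\delta^{r_\delta}$ admits a perfectly good single-valued branch for \emph{any} rational $r_\delta$, so ``single-valuedness'' is not the obstruction you can lean on. One must instead show that invariance of $g_{\bf r}$ under sufficiently many elements of $\Gamma_0(N)$, together with meromorphy at all cusps, forces each individual $r_\delta$ into $\ZZ$; testing only the parabolic generator and the Fricke involution does not obviously yield this, since those only constrain the two linear combinations in (2) and (3). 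In short, your skeleton is the right one, but the two load-bearing steps are left as black boxes---which is, in fairness, exactly what the paper itself outsources to \cite{Ligozat75}.
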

For the proof see \cite{Ligozat75}. This proposition along with the 
isomorphism \ref{CuspIsom2} gives us a recipe for calculating the order
of specific elements in the cuspidal subgroup, and the group structure of the cuspidal subgroup.
In \cite{KiatLing} this was done in the case $N$ is the product of two primes.
When $N$ is square free, the proposition \ref{ligozatprop} simplifies to
\begin{lemma}
    Let $N=p_1\dots p_t$ be a square free number. An integral element 
    $w \in W$ is linearly equivalent to the zero cusp if and only if
    \begin{enumerate}
        \item $\Lambda^{-1}w$ is integral,
        \item $\left( (1,1)\otimes (1,1) \otimes \cdots \otimes (1,1)\right) w=0$ (that is
            $w$ is a degree $0$ divisor).
        \item for all $i$ we have 
            \[ (1,1)\otimes \cdots \otimes (0,1) \otimes \cdots \otimes (1,1) \Lambda^{-1}w,\]
            is even (where the $(0,1)$ vector is in the $i$-th position, and every
            other vector is the $(1,1)$ vector).
    \end{enumerate}
    \label{cusplemma2}
\end{lemma}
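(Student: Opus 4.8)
The plan is to reduce the statement to Proposition \ref{ligozatprop} through the isomorphism $\Lambda$ of \ref{CuspIsom2}. The key observation is that an integral divisor $w \in W$ supported on the cusps is linearly equivalent to the zero cusp precisely when it is the divisor of a modular function on $X_0(N)$; since every function with divisor supported on the cusps is, up to a scalar, the eta-quotient $g_{\mathbf r}$ whose divisor is $\Lambda(\mathbf r)$, the unique candidate is $\mathbf r = \Lambda^{-1}w$. Thus $w$ is principal if and only if $g_{\Lambda^{-1}w}$ is a modular function, i.e.\ if and only if the five conditions of Proposition \ref{ligozatprop} hold for $\mathbf r = \Lambda^{-1}w$. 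The proof then consists of translating each of these five conditions into the tensor language of \ref{CuspIsom2} and matching them against the three conditions of the lemma.

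Two of the matches are immediate. Ligozat's condition (1), that all $r_\delta$ be integers, is exactly condition (1) of the lemma, since $\mathbf r = \Lambda^{-1}w$. For Ligozat's condition (4), that $\sum_\delta r_\delta = 0$, write $u_k=(1,1)$ for the summation covector on $W_k$; then $u_k\Lambda_k = (1,1)\begin{pmatrix} p_k & 1 \\ 1 & p_k\end{pmatrix} = (p_k+1)u_k$, so applying $u_1\otimes\cdots\otimes u_t$ to $w=\Lambda(\mathbf r)$ computes the degree of $w$ as $\deg w = \tfrac{1}{24}\prod_k(p_k+1)\sum_\delta r_\delta$. As $\prod_k(p_k+1)\neq 0$, this shows $\sum_\delta r_\delta=0$ if and only if $w$ has degree $0$, which is condition (2) of the lemma.

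For condition (5) I would expand $\prod_\delta \delta^{r_\delta}=\prod_k p_k^{\,e_k}$ with $e_k=\sum_{p_k\mid\delta} r_\delta$; since the $p_k$ are distinct primes this is a rational square exactly when each $e_k$ is even. Now $e_k$ is obtained from $\mathbf r$ by applying the covector that is $(0,1)$ in slot $k$ and $(1,1)$ elsewhere, so $e_k = \big((1,1)\otimes\cdots\otimes(0,1)\otimes\cdots\otimes(1,1)\big)\Lambda^{-1}w$, which is precisely condition (3) of the lemma. It then remains to see that Ligozat's two congruences (2) and (3) are automatic. Using $\Lambda^{-1}=24\bigotimes_k\Lambda_k^{-1}$ together with the elementary identities $(1,p_k)\Lambda_k^{-1}=(0,1)$ and $(p_k,1)\Lambda_k^{-1}=(1,0)$, the quantity $\sum_\delta r_\delta\,\delta$ collapses to $24$ times the coefficient of $P_N$ in $w$, and $\sum_\delta r_\delta\,(N/\delta)$ collapses to $24$ times the coefficient of $P_1$ in $w$. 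Both coefficients are integers because $w$ is integral, so both sums are divisible by $24$; hence conditions (2) and (3) of Proposition \ref{ligozatprop} hold for free.

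The genuinely conceptual step is the first one: identifying ``linearly equivalent to the zero cusp'' with ``divisor of the modular eta-quotient $g_{\Lambda^{-1}w}$'', which rests on the stated fact that $\Lambda$ is the divisor isomorphism between $R$ and $S$ and that cuspidal principal divisors arise only from eta-quotients. Everything after that is $2\times 2$ linear algebra inside the tensor product; the main pitfall to watch is the bookkeeping of the single factor $\tfrac1{24}$ in $\Lambda$ (and the matching $24$ in $\Lambda^{-1}$) and the orientation of the covectors, since a misplaced normalization would corrupt precisely the mod-$24$ cancellation that makes Ligozat's conditions (2) and (3) vanish.
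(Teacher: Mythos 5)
Your proposal is correct and follows essentially the same route as the paper's proof: reduce to Proposition \ref{ligozatprop} via the isomorphism $\Lambda$, observe that Ligozat's two mod-$24$ congruences are automatic for integral $w$ because $\bigl(\bigotimes_k (1,p_k)\bigr)\Lambda^{-1}w = 24\bigl(\bigotimes_k(0,1)\bigr)w$ and similarly for $(p_k,1)$, and match Ligozat's conditions (1), (4), (5) to the lemma's conditions (1), (2), (3). The only difference is cosmetic --- you make explicit the identification of cuspidal principal divisors with divisors of eta-quotients, which the paper leaves implicit.
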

\begin{proof}
    For an integral $w \in W$ to be linearly equivalent to $0$, 
    $v=\Lambda^{-1}w$ must satisfy the conditions in proposition
    \ref{ligozatprop}. 
    \begin{enumerate}
        \item We are specifically asking $\Lambda^{-1}w$ to be integral, so 
            condition 1 is satisfied by assumption.
        \item Note that the sum $\sum_{\delta | N} \delta r_\delta$ 
            is the same as 
            \[\left( (1,p_1) \otimes (1,p_2) \otimes \dots \otimes (1,p_t) \right) v.\]
            Substituting $v=\Lambda^{-1}w$ we get
            \begin{eqnarray*}
                \sum_{\delta | N} \delta r_\delta &=&  \left( \bigotimes_{i=1}^t (1,p_i) \right) v \\
                &=& \left( \bigotimes_{i=1}^t (1,p_i) \right)\Lambda^{-1}w \\
                &=& {24 \over \prod_{i=1}^t (p_i^2-1)}\left( \bigotimes_{i=1}^t (1,p_i) \right)\left( \bigotimes_{i=1}^t 
                 \begin{pmatrix} 
                     p_i & -1 \\ -1 & p_i 
                 \end{pmatrix} \right) w \\
                &=& {24 \over \prod_{i=1}^t (p_i^2-1)} \left( \bigotimes_{i=1}^t (0,p_i^2-1) \right) w \\
                &=& 24 \left( \bigotimes_{i=1}^t (0,1) \right)w
            \end{eqnarray*}
            Since we are assuming that $w$ is integral, we get that the above
            sum is divisible by $24$, so condition two is automatically satisfied.
        \item Similarly, note that the sum $\sum_{\delta | N} {N \over \delta} r_\delta$
            is the same as
            \begin{eqnarray*}
                \left( (p_1,1) \otimes (p_2,1) \otimes \dots \otimes (p_t,1) \right) v 
                = 24 \left( \bigotimes_{i=1}^t (1,0) \right)w.
            \end{eqnarray*}
            Again, the above sum is divisible by $24$, since we are assuming that 
            $w$ is an integral vector.
        \item Similarly $\sum_{\delta | N} r_\delta$ can be calculated by 
            \[\left( (1,1) \otimes (1,1) \otimes \dots \otimes (1,1) \right) v.\]
            Expanding this we get the product
            \[K\left( (1,1) \otimes (1,1) \otimes \dots \otimes (1,1) \right) w,\]
            for some nonzero $K$. Therefore we get that $\sum r_\delta=0$
            if and only if
            \[\left( (1,1) \otimes (1,1) \otimes \dots \otimes (1,1) \right) w=0,\]
            as desired.
        \item Finally $\prod_{\delta | N} \delta^{r_\delta}$ is a perfect
            square if the power of each prime is even. The power of the $i$-th
            prime of that product is just
            \[ (1,1)\otimes \cdots \otimes (0,1) \otimes \cdots \otimes (1,1) v,\]
            which gives us the desired result.
    \end{enumerate}
\end{proof}
As an immediate application of the lemma \ref{cusplemma2} we will calculate the order of cusps of the form 
$w_1 \otimes w_2 \otimes \cdots \otimes w_t \in W \isom S$
where $w_i=f_{0,i}\pm f_{1,i}$ for all $i$. This is a well known generalization of
the work of Ogg (\cite{Ogg74}):
\begin{prop}
    Let $N=p_1\dots p_t$ be a square free integer.
    Let $b_k = \pm 1$ for $k=1,2,\dots,t$, such that $b_k=-1$ for at least 
    one of these $k$'s, and \[ z=\sum_{d | N} \left(\prod_{p_k | d} b_k\right) P_d. \]
    Then, if $N=p_1$ is a prime, $z$ has order $\num\left( {p_1-1 \over 12} \right)$,
    otherwise it has order
    \[ \num\left( {(p_1+b_1)\cdots (p_t+b_t) \over 24} \right).\]
    \label{GeneralizedOgg}
\end{prop}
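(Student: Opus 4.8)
The plan is to push everything through the tensor description of the cuspidal group from isomorphism \ref{CuspIsom2} and then invoke Lemma \ref{cusplemma2}. First I would rewrite $z$ in the basis of $W \cong W_1 \otimes \cdots \otimes W_t$. For $d = p_1^{k_1}\cdots p_t^{k_t}$ with $k_i \in \{0,1\}$ the coefficient of $P_d$ in $z$ is $\prod_{p_k \mid d} b_k = \prod_k b_k^{k_k}$, so $z$ factors as the pure tensor $z = \bigotimes_{k=1}^t (f_{k,0} + b_k f_{k,1})$. Writing $w_k = (1,b_k)$ in the basis $(f_{k,0},f_{k,1})$, we have $z = w_1 \otimes \cdots \otimes w_t$. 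In particular $z$ is integral, so $nz$ is integral for every integer $n$, and condition (1) of the lemma can only fail through $\Lambda^{-1}$, not through $nz$ itself.

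The key computation is $\Lambda^{-1}(z)$. Using $\Lambda^{-1} = 24\bigotimes_i \Lambda_i^{-1}$ with $\Lambda_i^{-1} = \frac{1}{p_i^2-1}\begin{pmatrix} p_i & -1 \\ -1 & p_i\end{pmatrix}$, I would observe that each $w_i$ is an eigenvector: since $b_i^2 = 1$, a direct check gives $\begin{pmatrix} p_i & -1 \\ -1 & p_i\end{pmatrix}(1,b_i)^{T} = (p_i - b_i)(1,b_i)^{T}$, and $\frac{p_i - b_i}{p_i^2-1} = \frac{1}{p_i + b_i}$ in both cases $b_i = \pm 1$. Hence $\Lambda^{-1}(z) = \frac{24}{P}\, w_1 \otimes \cdots \otimes w_t$ with $P := \prod_i(p_i + b_i)$, and therefore $\Lambda^{-1}(nz) = \frac{24n}{P}\, z$, where the tensor still has all entries $\pm 1$.

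Now I would feed $nz$ into Lemma \ref{cusplemma2}. Condition (2) is automatic, since the degree of $z$ is $\prod_k(1+b_k) = 0$ because some $b_k = -1$. Condition (1) holds iff $P \mid 24n$, as the pure tensor has unit entries. For condition (3), the $i$-th contraction equals $\frac{24n}{P}\, b_i \prod_{j \neq i}(1+b_j)$. If at least two of the $b_k$ equal $-1$, every such product vanishes and (3) is automatic; if exactly one $b_a = -1$, only the $i = a$ contraction survives, equalling $-\frac{24n}{P}\,2^{t-1}$, which is even as soon as $t \geq 2$ (the factor $2^{t-1}$ supplies the needed $2$ once $\frac{24n}{P}\in\mathbb{Z}$). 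Thus for $t \geq 2$ condition (1) is the only binding constraint, and since the least $n>0$ with $P \mid 24n$ is $P/\gcd(P,24) = \num(P/24)$, the order is $\num\!\left(\frac{(p_1+b_1)\cdots(p_t+b_t)}{24}\right)$, as claimed.

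Finally the prime case $t = 1$ (necessarily $b_1 = -1$) is where condition (3) genuinely bites: here $P = p_1 - 1$, the single contraction is $-\frac{24n}{p_1-1}$, and its evenness forces $(p_1 - 1)\mid 12n$, which is strictly stronger than condition (1). The least such $n$ is $\num\!\left(\frac{p_1-1}{12}\right)$, explaining the $12$-versus-$24$ normalization. I expect the main obstacle to be exactly this parity bookkeeping in condition (3): one must track the power of $2$ contributed by the factors $(1+b_j) = 2$ and verify that it exactly compensates the discrepancy when $t \geq 2$ while being absent when $t = 1$; the rest is the eigenvector computation and the elementary fact that $\min\{n > 0 : P \mid 24n\} = \num(P/24)$.
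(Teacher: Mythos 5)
Your proof is correct and follows essentially the same route as the paper: both express $z$ as the pure tensor $\bigotimes_k (f_{k,0}+b_k f_{k,1})$, use the eigenvector computation to obtain $\Lambda^{-1}(z)=\frac{24}{\prod_i (p_i+b_i)}\bigotimes_i(\pm 1,\pm 1)$, and then read off conditions (1) and (3) of Lemma \ref{cusplemma2}, with (2) automatic from degree $0$. Your case split for condition (3) (at least two $b_k=-1$ versus exactly one) is merely a more explicit version of the paper's one-line observation that each contracted factor $(1,1)\cdot(b_i f_{0,i}+f_{1,i})=b_i+1$ is even once $t>1$.
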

\begin{proof}
    Since the cusp $z$ has degree $0$, we only need to check for what value
    of $n$ does $nz$ satisfy conditions one and three of lemma \ref{cusplemma2}.
    The cusp $z$ maps to $\bigotimes w_i$ where $w_i=f_{0,i} + b_i f_{1,i}.$
    Therefore 
    \[\Lambda^{-1}w={24 \over {\prod (p_i^2-1)}} \bigotimes_{i=1}^t \left((b_i p_i-1)f_{0,i}+(p_i-b_i)f_{1,i}\right).\]
    Factoring $p_i-b_i$ we get 
    \[ \Lambda^{-1}w={24 \over \prod (p_i+b_i)} \bigotimes_{i=1}^t (b_if_{0,i}+f_{1,i}).\]
    Therefore $\num\left({\prod (p_i+b_i) \over 24}\right)$ divides $n$.
    As for condition three, note that $(1,1) (b_if_{0,i}+f_{1,i})=b_i+1$
    which is even, so as long as $t>1$ condition three is automatically
    satisfied, and hence the order is $\num\left({\prod (p_i+b_i) \over 24}\right)$. 
    If $t=1$ then we get that that the order is $\num\left({p_1-1 \over 12}\right).$
\end{proof}

\subsection{Hecke Operators}
For an integer $N$ and a prime $p \ndiv N$, we 
have two degeneracy maps
\[ \alpha_{pN,N},\beta_{pN,N}:X_0(Np) \rightarrow X_0(N). \]
These two define a correspondence which is called the 
{\em $p$-th Hecke correspondence} on $X_0(N)$. 
This correspondence induces the Hecke operator $T_p$ 
on $J_0(N)$ via
\[ \xymatrix{
    T_p : J_0(N) \ar[r]^{\alpha^*} & J_0(Np) \ar[r]^{\beta_*} & J_0(N)
}. \]
Note that \[T_p( (E,C) ) = \sum_{D} ( (E/D, (C+D)/D),\]
where $D$ runs through all the cyclic subgroups of order $p$.
For general $n$, define $T_n$ by
\[T_n( (E,C) ) = \sum_{D} ( (E/D, (C+D)/D),\]
where $D$ runs through cyclic subgroups of order $n$ such that $C \cap D=0$.
We have the following well known identities
\begin{align*}
    T_{p^{k+1}}&= T_{p^k}T_p-pT_{p^{k-1}} &\mbox{if $p \ndiv N$,} \\
    T_{l^{k}} &=T_p^k & \mbox{if $l | N$,} \\
    T_{mn} &=T_m T_n & \mbox{if $(m,n)=1$.} 
\end{align*}

For $r|N$ such that $(r,N/r)=1$, we define the {\em Atkin-Lehner operator 
at $l$}, denoted by $w_r$, acting on $X_0(N)$ as follows:
Let $(E,C_N) \in X_0(N)$. By our assumption on $r$ we have
that $C_N = C_{r} \times C_{N/r}.$ Then
\begin{eqnarray*}
    w_r :  X_0(N) & \rightarrow  & X_0(N) \\
     (E,C_r \times C_{N/r}) & \goto & (E/C_r, (E[r]/C_r)\times (C_{N/r}+C_r)/C_r).
\end{eqnarray*}
Note that $w_r(P_1)=P_r$.
Also, note that $\beta_{N,N/r}=\alpha_{N,N/r} \compose w_r$, just by 
unraveling the definitions.

The set of Hecke operators form a subalgebra
of $\End(J_0(N))$. We denote this algebra by $\TT=\ZZ[T_2,T_3,\dots],$
and call it {\em the Hecke algebra of level $N$.}
For $T \in \TT$ we have that $T(J_0(N)_{\new}) \subset J_0(N)_{\new}$.
Therefore, we can consider the image of $\TT$ in $\End(J_0(N)_{\new}),$
call this $\TT^{\new}$.
Note that even though Hecke operators commute with each other, in general
Hecke operators do not commute with the Atkin-Lehner operators. For example,
one can check that
$w_N T w_N$ is not necessarily $T$, rather $w_N T w_N=T^\dual$, the
action of $T$ induced on $J_0(N)^\dual$ (For details see \cite{DiamondIm}).
However, one can check that the Atkin-Lehner operators commute with the Hecke
operators over $J_0(N)_{\new}$. Therefore, in
$\TT^{\new}$ we have that $T_n^{\dual}=T_n$.

The action of the $p$-th Hecke operators is very easy to compute 
on $J_0(N)_{\FF_p}$ 
\begin{thm}[Eichler-Shimura Relation]
    On $J_0(N)_{\FF_p}$ we have that \[ T_p=\frob_p + p/\frob_p,\]
    for all $p \ndiv N$.
\end{thm}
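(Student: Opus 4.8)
The plan is to reduce everything modulo $p$ and identify the Hecke correspondence $T_p$ with a sum of two graphs by means of the moduli interpretation. Since $p \ndiv N$, the curve $X_0(N)$ has a smooth proper model over $\ZZ$ localized at $p$, so $X_0(N)_{\FF_p}$ is a smooth curve whose points classify pairs $(E,C_N)$ with $E$ an elliptic curve over an $\FF_p$-algebra and $C_N$ cyclic of order $N$. Consequently both $\frob_p$ and $T_p$ are genuine endomorphisms of $\jac(X_0(N))_{\FF_p}=J_0(N)_{\FF_p}$, and it suffices to prove the identity there. First I would recall that $T_p$ is realized by the correspondence $(\alpha,\beta):X_0(Np)\to X_0(N)\times X_0(N)$ with $T_p=\beta_*\alpha^*$, and that on points $T_p((E,C_N))=\sum_D (E/D,(C_N+D)/D)$, the sum ranging over cyclic subgroups $D\subset E[p]$ of order $p$. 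The whole argument turns on the structure of these $D$ in characteristic $p$: the group scheme $E[p]$ carries the canonical connected subgroup $\ker(\frob_p)=E[p]^0$, the kernel of the relative Frobenius $F:E\to E^{(p)}$.

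The key step is the Deligne--Rapoport analysis of the reduction of this correspondence. Although $X_0(Np)$ itself has bad reduction at $p$, I would show that $X_0(Np)_{\FF_p}$ is two copies of $X_0(N)$ crossing at the supersingular points, so that the cycle $(\alpha,\beta)_*[X_0(Np)]$ reduces to the sum of two reduced components. On the first component one takes $D=\ker(F)$, whence $(E,C_N)\goto (E/\ker F,(C_N+\ker F)/\ker F)=(E^{(p)},C_N^{(p)})$; here $\alpha$ is an isomorphism onto $X_0(N)_{\FF_p}$ and $\beta$ is the Frobenius, so this component is exactly the graph of the Frobenius endomorphism $\frob_p$. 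On the second component $D$ is the complementary (for ordinary $E$, the \'etale) subgroup, $\alpha$ becomes the inseparable degree-$p$ Frobenius and $\beta$ an isomorphism, so this component is the transpose of the Frobenius graph and corresponds to the Verschiebung direction. The inseparable degree $p$ of these Frobenius projections is what recovers the bidegree $(p+1,p+1)$ of $T_p$, each component occurring with multiplicity one. It is enough to verify this equality of cycles over the ordinary locus, which is dense and open, since two correspondences on a smooth curve agreeing on a dense open agree everywhere; the supersingular points, where the two components meet, enter only to confirm the multiplicities.

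The remaining step is bookkeeping on the Jacobian. The first component induces $\frob_p$ and the transpose component induces the dual endomorphism $\frob_p^\dual$, so $T_p=\frob_p+\frob_p^\dual$ on $J_0(N)_{\FF_p}$. Because $X_0(N)$ is a curve, its Jacobian is canonically principally polarized and hence self-dual, which is what identifies the transpose correspondence with $\frob_p^\dual$. Finally, the standard relation $F\compose V=V\compose F=[p]$ between Frobenius and Verschiebung in characteristic $p$, together with the identification of $\frob_p^\dual$ with the Verschiebung, gives $\frob_p\compose\frob_p^\dual=[p]$, so $\frob_p^\dual=p/\frob_p$ in $\End(J_0(N)_{\FF_p})\otimes\QQ$, yielding $T_p=\frob_p+p/\frob_p$.

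The hard part will be the middle step: making the decomposition of the reduced correspondence into the Frobenius graph and its transpose fully precise. In particular one must control the scheme structure of $X_0(Np)_{\FF_p}$ at the supersingular points and justify that agreement of the two cycles over the ordinary locus propagates to the full cycle identity, rather than merely identifying the maps on geometric points. Once the cycle-level statement is in hand, the passage to $\End(J_0(N)_{\FF_p})$ and the manipulation $\frob_p^\dual=p/\frob_p$ are formal.
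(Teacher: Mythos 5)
The paper offers no proof of this theorem; it is stated as a classical fact (the standard reference being the Deligne--Rapoport analysis, as in the sources the paper cites for its modular-curve background). Your outline is precisely that standard argument --- the special fiber of $X_0(Np)$ at $p$ as two copies of $X_0(N)_{\FF_p}$ crossing at the supersingular points, one component giving the graph of $\frob_p$ via $D=\ker(F)$ and the other its transpose, with $F\compose V=[p]$ converting $\frob_p^\dual$ into $p/\frob_p$ --- and it is correct in all essentials, with the one genuinely technical point (the multiplicities of the cycle at the supersingular locus and the passage from the ordinary locus to the full cycle identity) honestly flagged rather than elided.
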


\section{Modular Abelian Varieties}
If $I$ is a saturated ideal of $\TT$, then $A_I=J/IJ$ is an optimal
quotient of $J$ since $IJ$ is an Abelian subvariety. Let $\phi:J \rightarrow A_I$
be the quotient map. Then $(A_I)^\dual$, the dual of $A_I$, is the unique 
Abelian subvariety of $J$ such 
that it projects isogeneously to $A_I$. By the Hecke equivariance of $\phi$ it follows that
$A_I^\dual$ is $\TT$-stable, and hence $\TT$ acts on $A_I^\dual$.
Note that we also have an action of $\TT$ which comes from embedding 
$A_I^\dual \rightarrow J^\dual$. As we discussed in previous section, when
$A_I$ is a new Abelian variety, these two actions are the same. 
As result, we focus on $A_I$ only when $A_I$ is a new modular Abelian variety.

\subsection{Algebraic Congruences}
\begin{defn}
The {\em algebraic congruence group} is the quotient group
\[ {S_2(\Gamma_0(N),\ZZ) \over S_2(\Gamma_0(N),\ZZ)[I]+S_2(\Gamma_0(N),\ZZ)[I]^{\perp}}.
\]
If $A$ is an optimal quotient associated to $I$, we denote the above group
by $C_A$.
We call the order of the above group the {\em congruence number} of
$A$, and its exponent the {\em congruence exponenet} of $A$. Let $r_A$ denote the
congruence number of $A$ and $\widetilde{r_A}$ denote the congruence exponent of $A$.
\end{defn}
This group can be calculated from the Hecke algebra using the following
lemma:
\begin{lemma}
	Let $\phi:J_0(N) \rightarrow A$ be a new optimal quotient. Let
	$B = \ker(\phi)$.
	Let $\TT_1$ be
	the image of $\TT$ in $\End(A^\vee)$ and $\TT_2$ be the image
	of $\TT$ in $\End(B)$. Then
	\[ \hom\left({\TT_1 \oplus \TT_2 \over \TT},\QQ/\ZZ\right) \isom 
	{S_2(\Gamma_0(N),\ZZ) \over S_2(\Gamma_0(N),\ZZ)[I]+S_2(\Gamma_0(N),\ZZ)[I]^{\perp}}
	\]
    as Hecke modules. Specifially $(\TT_1 \oplus \TT_2)/\TT$ is the Pontryagin dual of
    $C_A$ and $(\TT_1 \oplus \TT_2)/\TT \isom C_A$ as finite abelian groups.
    (See also \cite{ARS}, lemma 4.3.)
	\label{HeckeCong}
\end{lemma}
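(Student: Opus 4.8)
The plan is to deduce the statement from the perfect $\ZZ$-bilinear pairing
\[ \langle -,-\rangle\colon \TT\times S\to\ZZ,\qquad \langle T,f\rangle=a_1(Tf), \]
where $S:=S_2(\Gamma_0(N),\ZZ)$ and $a_1$ is the first Fourier coefficient. This pairing identifies $\TT\isom\hom(S,\ZZ)$ and $S\isom\hom(\TT,\ZZ)$, and since $\TT$ is commutative it is Hecke-adjoint ($\langle UT,f\rangle=\langle T,Uf\rangle$), so every identification below is one of $\TT$-modules. First I would record two facts about the pairing. Using $a_1(T_ng)=a_n(g)$ one checks that $S[I]=\{f : a_1(Tf)=0\ \text{for all}\ T\in I\}$ is exactly the annihilator $I^{\perp}$ of $I$: if $a_1(Tf)=0$ for all $T\in I$, then for every $n$ we have $a_n(Tf)=a_1(T_nTf)=0$ (as $T_nT\in I$), forcing $Tf=0$. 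Second, since $A$ is new the operators $T_n$ with $(n,N)=1$ are Petersson self-adjoint and the rational decomposition $S\otimes\QQ=(S\otimes\QQ)[I]\oplus(S\otimes\QQ)[I']$ into the factors cut out by $A$ and $B$ is orthogonal; intersecting with $S$ gives $S[I]^{\perp}=S[I']$, where $I'=\ann_{\TT}(B)$.

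Next I would identify $\TT_1$ and $\TT_2$ as quotients of $\TT$. Because $A=A_I$ is a new optimal quotient with $I$ saturated, $\ker(\TT\to\End(A^{\dual}))=I$ and $\ker(\TT\to\End(B))=I'$, so $\TT_1=\TT/I$ and $\TT_2=\TT/I'$ are both torsion-free. The natural map $\TT\to\TT_1\oplus\TT_2$ is injective: its kernel is $I\cap I'$, and $(I\cap I')\otimes\QQ=I_{\QQ}\cap I'_{\QQ}=0$ because $I$ and $I'$ are complementary ideals of the semisimple algebra $\TT\otimes\QQ$. The same complementarity gives $\TT\otimes\QQ\isom(\TT_1\otimes\QQ)\times(\TT_2\otimes\QQ)$, so $\TT\to\TT_1\oplus\TT_2$ is an injection of free $\ZZ$-modules of equal rank, whence $Q:=(\TT_1\oplus\TT_2)/\TT$ is finite.

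The core of the argument is to apply $\hom(-,\ZZ)$ to $0\to\TT\to\TT_1\oplus\TT_2\to Q\to 0$. Since $\TT_1\oplus\TT_2$ is free and $Q$ is finite, the long exact sequence collapses to
\[ 0\to\hom(\TT_1,\ZZ)\oplus\hom(\TT_2,\ZZ)\to\hom(\TT,\ZZ)\to\ext^1(Q,\ZZ)\to 0, \]
and $\ext^1(Q,\ZZ)\isom\hom(Q,\QQ/\ZZ)$ for the finite group $Q$. Under $\hom(\TT,\ZZ)\isom S$, the inclusion $\hom(\TT/I,\ZZ)\hookrightarrow\hom(\TT,\ZZ)$ has image $I^{\perp}=S[I]$, and likewise $\hom(\TT/I',\ZZ)$ has image $S[I']$; as these are induced by the projections $\TT\to\TT_1$ and $\TT\to\TT_2$, their sum has image $S[I]+S[I']$. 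Reading off the cokernel yields
\[ \hom(Q,\QQ/\ZZ)\isom\frac{S}{S[I]+S[I']}=C_A \]
as Hecke modules, which is the first displayed isomorphism. Dualizing once more shows $Q$ is the Pontryagin dual of $C_A$, and since a finite abelian group is (noncanonically) isomorphic to its Pontryagin dual we obtain $(\TT_1\oplus\TT_2)/\TT\isom C_A$ as finite abelian groups.

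I expect the main obstacle to be the two ``perp'' identifications in the first paragraph, where the real content lives. The equality $S[I]^{\perp}=S[I']$ relies on the orthogonality of the newform decomposition under the Petersson product together with self-adjointness of the $T_n$ for $(n,N)=1$, and this is precisely where the hypothesis that $A$ is new is indispensable: for a non-new quotient the two $\TT$-actions on $A^{\dual}$ diverge and the clean duality breaks. The identifications $\ker(\TT\to\End(A^{\dual}))=I$ and $\ker(\TT\to\End(B))=I'$ likewise use newness, optimality, and faithfulness of the rational Hecke action on each isogeny factor; once these are in place, the homological bookkeeping above is purely formal.
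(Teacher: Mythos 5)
Your proposal is correct and follows essentially the same route as the paper: apply $\hom(-,\ZZ)$ to the short exact sequence $0\to\TT\to\TT_1\oplus\TT_2\to(\TT_1\oplus\TT_2)/\TT\to 0$, use the perfect pairing $(T,g)\mapsto a_1(Tg)$ to identify the resulting cokernel with $C_A\isom\ext^1((\TT_1\oplus\TT_2)/\TT,\ZZ)$, and then convert $\ext^1(-,\ZZ)$ into $\hom(-,\QQ/\ZZ)$ for finite modules. You supply more detail than the paper does on the two ``perp'' identifications and on the finiteness of the quotient, but the skeleton of the argument is the same.
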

\begin{proof}
    As in \cite{ARS}, apply the $\hom(\dash,\ZZ)$ functor to the
    \[ \xymatrix{
    0 \ar[r] & \TT \ar[r] & \TT_1 \oplus \TT_2 \ar[r] & (\TT_1 \oplus \TT_2)/ \TT \ar[r] & 0,
    } \]
    to get
    \[ \xymatrix{
    0 \ar[r] & \hom(\TT_1 \oplus \TT_2, \ZZ) \ar[r] & \hom(\TT,\ZZ) \ar[r] & \ext^1( (\TT_1 \oplus \TT_2)/\TT,\ZZ) \ar[r] & 0.
    } \]
    Note that the $0$ on the left is because $(\TT_1 \oplus \TT_2)/\TT$ is finite group, and 
    $\ZZ$ has no torsion subgroups, and the $0$ on the right is because $\TT_1 \oplus \TT_2$
    are torsion free, hence free $\ZZ$-modules.
    Using the $\TT$-equivariant perfect bilinear pairing $\TT \times S_2(\ZZ) \rightarrow \ZZ$
    given by $(t,g) \rightarrow a_1(t(g))$, the above exact sequence transforms to
    \[ 
    0 \to S_2(\Gamma_0(N),\ZZ)[I] \oplus S_2(\Gamma_0(N),\ZZ)[I]^\perp \to S_2(\Gamma_0(N)) \to \ext^1( (\TT_1 \oplus \TT_2)/\TT,\ZZ) \to 0.\]
    Therefore \[ C_A \isom \ext^1( (\TT_1 \oplus \TT_2)/\TT, \ZZ).\]
    Now for any torsion $\ZZ$-module $M$, applying $\hom(M,\dash)$ to the exact sequence
    \[ \xymatrix{
        0 \ar[r] & \ZZ \ar[r] & \QQ \ar[r] & \QQ/\ZZ \ar[r] & 0,
    } \]
    we get
    \[ \xymatrix{
        \hom(M,\QQ) \ar[r] & \hom(M,\QQ/\ZZ) \ar[r] & \ext^1(M,\ZZ) \ar[r] & \ext^1(M,\QQ).
    } \]
    However, $\hom(M,\QQ)=0$ since $M$ is assumed torsion, and $\ext^1(M,\QQ)=0$ since $\QQ$
    is divisible, and hence injective. 
    Therefore \[ C_A \isom \hom( (\TT_1 \oplus \TT_2)/\TT, \QQ/\ZZ).\]
    Since $(\TT_1\oplus \TT_2)/\TT$ is a torsion $\ZZ$-modules we have that
    $(\TT_1 \oplus \TT_2)/\TT \isom \hom( (\TT_1 \oplus \TT_2)/\TT, \QQ/\ZZ)$, which
    proves the last asserstion in the lemma. 
\end{proof}

\subsection{Geometric Congruences}
Let $\phi:J \rightarrow A$ be an optimal quotient. Dualizing this we get
$\phi^\dual: A^\dual \rightarrow J^\dual$. Composing this with the theta polatization we
get
\[ \psi:A^\dual \rightarrow J^\dual \isom J \rightarrow A.\]
\begin{defn}
    The {\em geometric congruence group} is the kernel of the isogeny $\psi.$ We denote the
    above group by $D_A$. The 
    {\em geometric congruence number} of $A$ is the order of $D_A$, and the {\em geometric
    congruence exponent} is the exponent of $D_A$.
    Let $n_A$ denote the geometric congruence number of $A$ and $\widetilde{n_A}$ denote the
    geometric congruence exponent of $A$.
\end{defn}
\begin{remark}
    Note that in \cite{ARS}, what we call {\em geoemtric congruence number} is called
    {\em modular number}, and {\em geometric congruence exponent} is called {\em modular
    exponent}. 
\end{remark}

\begin{remark}
    Note that
    \[ D_A=\ker(\psi) = \ker(\phi) \intersect A^\dual,\]
    since the map $\phi^\dual : A^\dual \rightarrow J^\dual$ is injective.
\end{remark}

For the rest of this section we briefly discuss the relationship between algebraic congruence
group and the geometric congruence group. In \cite{ARS} the following theorem is proved.
\begin{thm}[Agashe-Ribet-Stein]
    \label{ARSCongExp}
    If $f \in S_2(\CC)$ is a newform, then
    \begin{enumerate}
        \item We have $\widetilde{n_{A_f}} | \widetilde{r_{A_f}}$, and
        \item If $p^2 \ndiv N$, then $\ord_p(\widetilde{r_{A_f}}) = \ord_p(\widetilde{n_{A_f}}).$
    \end{enumerate}
\end{thm}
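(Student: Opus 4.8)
The plan is to realize both exponents $\widetilde{n_{A_f}}$ and $\widetilde{r_{A_f}}$ as exponents of congruence modules attached to the same rational Hecke representation but computed against two different integral structures, and then to compare these computations one prime at a time. Write $\phi:J_0(N)\to A$ for the optimal quotient, $B=\ker(\phi)$, and $I$ for the associated ideal. On the algebraic side, Lemma \ref{HeckeCong} already identifies $C_A$ with $(\TT_1\oplus\TT_2)/\TT$, that is, with the cokernel of the inclusion $S_2(\Gamma_0(N),\ZZ)[I]\oplus S_2(\Gamma_0(N),\ZZ)[I]^{\perp}\hookrightarrow S_2(\Gamma_0(N),\ZZ)$, the perfect pairing $\TT\times S_2(\Gamma_0(N),\ZZ)\to\ZZ$, $(t,g)\mapsto a_1(tg)$, being what makes the identification work. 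On the geometric side, I would start from the stated description $D_A=\ker(\phi)\cap A^{\dual}=B\cap A^{\dual}$ and note that the addition isogeny $A^{\dual}\times B\to J_0(N)$ has kernel $\cong D_A$; passing to homology gives the presentation $D_A\cong H_1(J_0(N),\ZZ)\big/\big(H_1(A^{\dual},\ZZ)\oplus H_1(B,\ZZ)\big)$, where $H_1(J_0(N),\ZZ)$ carries the perfect alternating pairing coming from the theta (principal) polarization.

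Next I would exploit that the Hecke decomposition $H_1(J_0(N),\QQ)=H_1(J_0(N),\QQ)[I]\oplus H_1(J_0(N),\QQ)[I]^{\perp}$ is orthogonal for the polarization pairing. This follows because on the new part the Hecke operators are self-adjoint for this pairing: the excerpt already records $w_N T w_N=T^{\dual}$ and that $T_n^{\dual}=T_n$ on $J_0(N)_{\new}$. Orthogonality lets me rewrite $D_A$ as $L^{\sharp}/L$, the dual-lattice-over-lattice for the pairing restricted to $H_1(A^{\dual},\ZZ)$, in exactly the same way that $C_A$ is a dual-lattice-over-lattice for the restriction of the $\TT$-valued pairing to $S_2(\Gamma_0(N),\ZZ)[I]$. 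Thus both groups are congruence modules of shape $\TT_1/\mathfrak{a}$ for an ideal $\mathfrak{a}$ measuring the congruence between the $f$-part and its complement; the only difference is which faithful $\TT$-lattice — homology or cusp forms — one feeds in.

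The comparison then splits along the two clauses. For clause (1), the divisibility $\widetilde{n_{A_f}}\mid\widetilde{r_{A_f}}$ follows from the fact that the homology lattice $H_1(J_0(N),\ZZ)$ is unimodular for the polarization pairing, since that polarization is principal. Hence the geometric congruence module is computed inside a self-dual ambient lattice, and its exponent must divide that of the algebraic congruence module, which is computed from the cusp-form lattice and its a priori non-unimodular pairing into $\TT$. For clause (2), at a prime $p$ with $p^2\ndiv N$ I would invoke multiplicity one: localizing at the maximal ideal $\mathfrak{m}$ of $\TT$ above $p$ attached to $f$, the module $H_1(J_0(N),\ZZ)_{\mathfrak{m}}$ is free of rank $2$ over the Gorenstein ring $\TT_{\mathfrak{m}}$, while $S_2(\Gamma_0(N),\ZZ)_{\mathfrak{m}}$ is free of rank $1$. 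With both integral structures now locally free, the two congruence modules have the same $p$-adic length, giving $\ord_p(\widetilde{r_{A_f}})=\ord_p(\widetilde{n_{A_f}})$.

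The main obstacle I expect is clause (2): establishing the Gorenstein/freeness input (multiplicity one for $H_1(J_0(N),\ZZ)_{\mathfrak{m}}$ when $p^2\ndiv N$) and correctly matching the two integral structures under the period pairing. This is precisely where the hypothesis $p^2\ndiv N$ enters, and where the known failures of equality at primes with $p^2\mid N$ live. By contrast, the lattice bookkeeping of clause (1) and the orthogonality of the Hecke decomposition are routine once the pairings are in place; the representation-theoretic input at $\mathfrak{m}$ is the crux, and I would import it from the multiplicity-one results underlying \cite{ARS}.
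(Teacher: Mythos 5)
First, a point of reference: the paper does not prove this theorem at all --- it is quoted verbatim from \cite{ARS} (``In \cite{ARS} the following theorem is proved''), so there is no in-paper argument to compare yours against. Judged on its own terms, your proposal gets the overall architecture right --- both exponents are exponents of congruence modules of faithful $\TT$-lattices (cusp forms for $\widetilde{r_{A_f}}$ via Lemma \ref{HeckeCong}, and $H_1(J_0(N),\ZZ)$ for $\widetilde{n_{A_f}}$ via the presentation $D_A\isom H_1(J_0(N),\ZZ)/(H_1(A^\dual,\ZZ)\oplus H_1(B,\ZZ))$, which is correct) --- and your clause (2) plan (localize at $\mathfrak{m}$, use multiplicity one to get $H_1(J_0(N),\ZZ)_{\mathfrak{m}}$ free of rank $2$ over a Gorenstein $\TT_{\mathfrak{m}}$, then note that congruence modules of free modules all have the same exponent) is essentially the argument in \cite{ARS}, including the correct identification of where the hypothesis $p^2\ndiv N$ enters.

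The genuine gap is in clause (1). Your stated reason --- that $H_1(J_0(N),\ZZ)$ is unimodular for the principal polarization, ``hence'' its congruence module's exponent divides that of the cusp-form lattice --- is a non sequitur. Different faithful $\TT$-lattices can have congruence modules of strictly different exponents (e.g.\ $\TT_1\oplus\TT_2$ itself is a faithful lattice with trivial congruence module), so self-duality of one ambient lattice gives no comparison with another. What unimodularity actually buys you is the reformulation $D_A\isom L^{\sharp}/L$; it does not give the divisibility. The missing step is the observation that $\widetilde{r_{A_f}}$ is the denominator of the idempotent: since $(\TT_1\oplus\TT_2)/\TT$ has exponent $\widetilde{r_{A_f}}$, there is $T\in\TT$ whose image in $\TT_1\oplus\TT_2$ is $(\widetilde{r_{A_f}},0)$, i.e.\ $T$ acts as multiplication by $\widetilde{r_{A_f}}$ on $A^\dual$ and as $0$ on $B$. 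Applied to any $x\in D_A=A^\dual\cap B$ this gives $\widetilde{r_{A_f}}\,x=Tx=0$, hence $\widetilde{n_{A_f}}\mid\widetilde{r_{A_f}}$. (Equivalently: $\widetilde{r_{A_f}}$ annihilates the congruence module of \emph{every} faithful $\TT$-lattice, because the cusp-form/Hecke pairing of Lemma \ref{HeckeCong} shows the cusp-form lattice realizes the maximal possible exponent.) With that substitution clause (1) is correct and, in fact, no longer needs the polarization at all.
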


Here we prove a result along the line of first part of the above theorem.
\begin{lemma}
    Let $\phi:J_0(N) \rightarrow A$ be a new optimal quotient and let $B=\ker(\phi)$.
    Let $\mm$ be a maximal ideal of $\TT$. If $A^\dual[\mm]$ and $B[\mm]$ are both
    nontrivial, then $\mm$ is in the annihilator of $\hom(C_A,\QQ/\ZZ)$. Specifically
    $\#\TT/\mm$ divides $r_A$, and the characteristic of $\TT/\mm$ divides $\widetilde{r_A}$.
    \label{AlgCongLemm}
\end{lemma}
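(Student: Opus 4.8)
The plan is to move everything to the Hecke algebra and reduce the statement to a local computation. By Lemma~\ref{HeckeCong} we have a $\TT$-module isomorphism $\hom(C_A,\QQ/\ZZ)\isom M$, where $M=(\TT_1\oplus\TT_2)/\TT$ sits in the exact sequence
\[ 0 \to \TT \to \TT_1 \oplus \TT_2 \to M \to 0. \]
Since $r_A=\#C_A=\#M$ and $\widetilde{r_A}=\exp(C_A)=\exp(M)$, all three assertions follow once I show that $\mm$ meets the support of $M$, i.e. that the localization $M_\mm$ is nonzero (equivalently $\ann_\TT(M)\subseteq\mm$); this containment is precisely what underlies the claimed divisibilities.

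First I would translate the two geometric hypotheses into statements about $\TT_1$ and $\TT_2$. Write $\TT_1=\TT/\ann_\TT(A^\dual)$. If $A^\dual[\mm]\neq 0$, then $\mm\TT_1\neq\TT_1$: were $1\in\mm\TT_1$, writing $1=\sum_i s_i t_i$ with each $t_i$ in the image of $\mm$ and $s_i\in\TT_1$, every $x\in A^\dual[\mm]$ would satisfy $x=\sum_i s_i t_i x=0$, contradicting $A^\dual[\mm]\neq 0$. As $\mm$ is maximal, $\mm\TT_1\neq\TT_1$ forces $\ann_\TT(A^\dual)\subseteq\mm$, that is $(\TT_1)_\mm\neq 0$. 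The identical argument applied to $B$ and $\TT_2$ gives $(\TT_2)_\mm\neq 0$. This step only needs the easy ``faithfulness'' direction, so the hypotheses translate cleanly to both localized factors being nonzero.

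The crux is then an idempotent obstruction. Localizing the exact sequence at $\mm$ yields an injection $\TT_\mm \hookrightarrow (\TT_1)_\mm \oplus (\TT_2)_\mm$ with cokernel $M_\mm$. If $M_\mm$ were zero this injection would be a ring isomorphism; but $\TT_\mm$ is local and hence has no idempotents other than $0$ and $1$, whereas $(\TT_1)_\mm\oplus(\TT_2)_\mm$ contains the idempotent $(1,0)$, which is neither $0$ nor $1$ exactly because both summands are nonzero by the previous paragraph. This contradiction gives $M_\mm\neq 0$, so $\mm\in\operatorname{Supp}(M)$. I expect this idempotent step to be the conceptual heart of the argument, though the real work is the preceding verification that both localized factors survive; once that is in hand the obstruction is immediate.

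Finally I would extract the quantitative consequences. Since $M_\mm$ is a nonzero finite module over the commutative local ring $\TT_\mm$ with residue field $k=\TT/\mm$, a composition series shows $\#M_\mm$ is a positive power of $\#k$; as $\#M=\prod_{\mm'}\#M_{\mm'}$ we conclude that $\#(\TT/\mm)$ divides $r_A$. Likewise $M_\mm$ is nonzero and $p$-primary for $p=\mathrm{char}(k)$, so $p\mid\exp(M)=\widetilde{r_A}$, which is the remaining assertion.
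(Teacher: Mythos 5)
Your proposal is correct and follows essentially the same route as the paper: both pass through the exact sequence $0 \to \TT \to \TT_1 \oplus \TT_2 \to \hom(C_A,\QQ/\ZZ) \to 0$ from Lemma~\ref{HeckeCong}, translate the hypotheses into the nonvanishing of $\TT_1$ and $\TT_2$ at $\mm$, and conclude that $\TT$ cannot map onto a product of two nonzero rings locally at $\mm$. The only cosmetic difference is that the paper tensors with $\TT/\mm$ and invokes the absence of zero divisors in a field where you localize and invoke the absence of nontrivial idempotents in a local ring; your final paragraph also spells out the divisibility consequences that the paper leaves implicit.
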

\begin{proof}
    Since $A^\dual[\mm]$ is nontrivial, $\TT_1 \otimes_\TT \mm$ is not the unit ideal,
    which implies $\TT_1 \otimes \TT/\mm$ is nontrivial.
    Similarly $B[\mm]$ nontrivial implies $\TT_2 \otimes \TT/\mm$ is nontrivial.
    Therefore $\TT/\mm \rightarrow (\TT_1 \oplus \TT_2) \otimes \TT/\mm$ is not surjective,
    since $(\TT_1 \oplus \TT_2) \otimes \TT/\mm$ has zero divisors, while $\TT/\mm$ is a field.
    Therefore 
    \[ \left({\TT_1 \oplus \TT_2 \over \TT} \right) \otimes_\TT \TT/\mm \isom 
    \hom(C_A,\QQ/\ZZ) \otimes \TT/\mm \]
    is nontrivial, which is the desired result.
\end{proof}

\begin{cor}
    If $A$ is a new optimal quotient then $\sqrt{\ann(D_A)} \subset \sqrt{\ann(C_A^\dual)},$
    where $\sqrt I$ is the product of prime ideals dividing $I$.
\end{cor}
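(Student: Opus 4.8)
The plan is to reduce the claimed radical inclusion to a statement about the finitely many maximal ideals in the supports of the two finite $\TT$-modules $D_A$ and $C_A^\dual$, and then feed those maximal ideals into Lemma \ref{AlgCongLemm}. First I would record the structural facts. Since $\psi$ is an isogeny, $D_A=\ker(\psi)$ is a finite group, and by the remark following the definition of the geometric congruence group we have $D_A=\ker(\phi)\intersect A^\dual = B \intersect A^\dual$; in particular $D_A \subseteq A^\dual$ and $D_A \subseteq B$, and these are $\TT$-submodules. Here the hypothesis that $A$ is \emph{new} is exactly what guarantees that $\TT$ acts on $A^\dual$ compatibly with its action through $J^\dual$, so that $D_A$, $A^\dual$, $B$, and $C_A^\dual=\hom(C_A,\QQ/\ZZ)$ are all honest $\TT$-modules and Lemma \ref{AlgCongLemm} is available. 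Because $D_A$ and $C_A^\dual$ (the latter finite by Lemma \ref{HeckeCong}) are finite, their annihilators each contain a nonzero integer, so every prime of $\TT$ dividing $\ann(D_A)$ or $\ann(C_A^\dual)$ is maximal; distinct maximal ideals are comaximal, so in each case the product of primes dividing the ideal is the intersection of the (finitely many) maximal ideals in the support. Thus it suffices to prove the inclusion at the level of these prime sets: every maximal ideal $\mm$ dividing $\ann(D_A)$ also divides $\ann(C_A^\dual)$.

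Next I would verify that divisibility criterion for a fixed maximal ideal $\mm$. For a finite $\TT$-module $M$ one has $\mm \supseteq \ann(M)$ if and only if $M[\mm]\neq 0$: decompose $M$ into its $\mm$-primary pieces and use that a nonzero module over the local ring $\TT_\mm$ has nonzero socle. So assume $D_A[\mm]\neq 0$ and choose $0 \neq P \in D_A$ with $\mm P=0$. Since $D_A \subseteq A^\dual$ we get $P \in A^\dual[\mm]$, and since $D_A \subseteq B$ we get $P \in B[\mm]$; hence both $A^\dual[\mm]$ and $B[\mm]$ are nontrivial. Now Lemma \ref{AlgCongLemm} applies and gives that $\mm$ contains $\ann(\hom(C_A,\QQ/\ZZ))=\ann(C_A^\dual)$, equivalently $C_A^\dual[\mm]\neq 0$. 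Assembling this over all such $\mm$ gives the containment of the set of prime divisors of $\ann(D_A)$ in that of $\ann(C_A^\dual)$, which is the asserted radical inclusion.

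The proof is short because Lemma \ref{AlgCongLemm} does the real work; the only point requiring care is the bookkeeping that translates the ideal-theoretic statement into one about torsion submodules, and the verification that $\sqrt{\ann(-)}$ of a finite torsion module is governed entirely by its support. The main thing to get right is therefore the two containments $D_A \subseteq A^\dual$ and $D_A \subseteq B$ together with the finiteness of $D_A$, since these are precisely what let a single $\mm$-torsion point of $D_A$ simultaneously witness the two hypotheses of the lemma. I expect no genuine obstacle beyond ensuring that the $\TT$-module structures on $A^\dual$ and $B$ are the compatible ones, which is where newness of $A$ enters.
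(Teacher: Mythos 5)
Your proof is correct and is exactly the argument the paper intends: the paper's own proof of this corollary is the single line that it ``follows immediately from Lemma \ref{AlgCongLemm},'' and you have supplied precisely the missing bookkeeping, namely that $D_A=\ker(\phi)\cap A^\dual=B\cap A^\dual$ lets a single nonzero $\mm$-torsion point of $D_A$ witness both hypotheses $A^\dual[\mm]\neq 0$ and $B[\mm]\neq 0$ of that lemma, and that the radical of the annihilator of a finite $\TT$-module is determined by its support. The only caveat is inherited from the paper's own wording rather than from your reasoning: the support containment you establish literally yields $\sqrt{\ann(D_A)}\supseteq\sqrt{\ann(C_A^\dual)}$ as ideals, so the displayed $\subset$ must be read as containment of the sets of prime divisors (equivalently, divisibility of the radicals), which is exactly what you prove.
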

\begin{proof}
    This follows immediately from \ref{AlgCongLemm}.
\end{proof}
\begin{cor}
    If $l$ is a prime number such that $l | n_A$ then $l | r_A$.
\end{cor}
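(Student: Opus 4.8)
The plan is to deduce this from Lemma \ref{AlgCongLemm} by exhibiting a single maximal ideal $\mm$ of $\TT$ lying over $l$ for which both $A^\dual[\mm]$ and $B[\mm]$ are nontrivial, where $B=\ker(\phi)$ as in that lemma. The bridge between the geometric side ($n_A$) and the algebraic side ($r_A$) is the description of $D_A$ recorded in the remark above, namely $D_A=\ker(\phi)\intersect A^\dual = B\intersect A^\dual$, together with $n_A=\#D_A$. The whole point will be that $D_A$ sits simultaneously inside $B$ and inside $A^\dual$, so any nonzero torsion it carries is torsion of both factors at once.

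First I would observe that if the prime $l$ divides $n_A=\#D_A$, then the finite abelian group $D_A$ contains an element of order $l$, so $D_A[l]\neq 0$. Because $D_A\subset A^\dual$ and $A$ is a new quotient, $D_A$ is stable under $\TT$ (the two Hecke actions on $A^\dual$ agree, as discussed above), so $D_A[l]$ is a nonzero module over the Artinian ring $\TT/l\TT=\TT\otimes\FF_l$. The key step is then the standard fact that a nonzero finite-length module over an Artinian ring has nonzero socle: writing $\TT/l\TT\isom\prod_{\mm\supset l\TT}(\TT/l\TT)_\mm$ as a product of local Artinian rings, the module $D_A[l]$ decomposes accordingly and some local component is nonzero, whence by Nakayama that component has nonzero $\mm$-torsion. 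Thus there is a maximal ideal $\mm\supset l\TT$ with $D_A[\mm]\neq 0$.

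Now since $D_A\subset B$ and $D_A\subset A^\dual$, the nonzero subgroup $D_A[\mm]$ lies inside both $B[\mm]$ and $A^\dual[\mm]$, so both of these are nontrivial. Applying Lemma \ref{AlgCongLemm} to this $\mm$ gives that $\#\TT/\mm$ divides $r_A$. As $\mm$ contains $l$, the residue field $\TT/\mm$ has characteristic $l$, so $l\mid\#\TT/\mm$, and therefore $l\mid r_A$, as desired. (Equivalently, one may use the other conclusion of the lemma, that $\mathrm{char}(\TT/\mm)=l$ divides $\widetilde{r_A}\mid r_A$.)

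I expect the only nonformal point to be the passage from ``$l$ divides the integer $\#D_A$'' to ``a single maximal ideal $\mm$ over $l$ annihilates a common nonzero subgroup of $B$ and $A^\dual$.'' This is precisely the nonvanishing of the socle of the Artinian $\TT/l\TT$-module $D_A[l]$, and it is the place where the identification $D_A=B\intersect A^\dual$ is used in an essential way; everything else is the formal application of Lemma \ref{AlgCongLemm}.
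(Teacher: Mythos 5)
Your proposal is correct and follows the same route the paper intends: the paper gives no separate argument for this corollary, deducing it from Lemma \ref{AlgCongLemm} via the preceding corollary on $\sqrt{\ann(D_A)}\subset\sqrt{\ann(C_A^\dual)}$, and your socle/Nakayama argument locating a maximal ideal $\mm$ over $l$ with $D_A[\mm]\neq 0$ inside both $A^\dual[\mm]$ and $B[\mm]$ is exactly the step the paper leaves implicit.
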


\section{Elliptic Curves}
An Abelian variety of dimension $1$ is called an {\em elliptic curve}. Let
$R$ be any ring. An
elliptic curve over the ring $R$ has a model
\[ E:y^2+a_1xy+a_3y=x^3+a_2x^2+a_4x+a_6, \]
where $a_i \in R$, which is called a {\em Weierstrass model} 
(see \cite{AEC:I}). This model is not unique,
and for any $u, r,$ and $s$ we can find an equivalent model by
the following substitutions:
\begin{align*}
    ua_1'=& a_1+2s,\\
    u^2a_2'=& a_2-sa_1+3r-s^2, \\
    u^3a_3'=& a_3+ra_1+2t, \\
    u^4a_4'=& a_4-sa_3+2ra_2-(t+rs)a_1+3r^2-2st, \\
    u^6a_6'=& a_6+ra_4+r^2a_2+r^3-ta_3-t^2-rta_1.
\end{align*}
There are many invariants attached to an elliptic curve $E/R$. Here,
we recall the {\em discriminant} and the {\em conductor}.
\begin{prop}
    The discriminant $\Delta$ of the Weierstrass model
    \[ E: y^2+a_1xy+a_3y=x^3+a_2x^2+a_4+a_6, \]
    can be calculated as follows:
    \begin{align*}
        b_2&=a_1^2+4a_2, \\
        b_4&= 2a_4+a_1a_3, \\
        b_6&= a_3^2+4a_6, \\
        b_8&= (b_2b_6-b_4^2)/4, \\
        \Delta&= -b_2^2b_8-8b_4^3-27b_6^2+9b_2b_4b_6.
    \end{align*}
\end{prop}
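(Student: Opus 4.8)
The plan is to treat $\Delta$ as the intrinsic quantity detecting the failure of the Weierstrass model to be smooth, and to identify it, up to a fixed normalizing constant, with the discriminant of the cubic obtained by completing the square. Concretely, I would proceed in four steps: reduce the general Weierstrass equation to the form $Y^2 = (\text{cubic})$ by completing the square; read off the $b_i$ as the coefficients of that cubic and check the auxiliary identity $4b_8 = b_2 b_6 - b_4^2$; identify $\Delta$ with the discriminant of the cubic; and finally invoke a universality argument to pass from the generic case to an arbitrary base ring $R$.

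First I would multiply the defining equation by $4$ and complete the square on the left, writing
\[ (2y + a_1 x + a_3)^2 = 4x^3 + b_2 x^2 + 2 b_4 x + b_6. \]
Expanding the left-hand side and matching coefficients recovers exactly $b_2 = a_1^2 + 4a_2$, $b_4 = 2a_4 + a_1 a_3$, and $b_6 = a_3^2 + 4 a_6$, so the $b_i$ are forced on us as the coefficients of the cubic $f(x) = 4x^3 + b_2 x^2 + 2b_4 x + b_6$. Substituting $Y = 2y + a_1 x + a_3$ gives the model $Y^2 = f(x)$, which is isomorphic to the original whenever $2$ is invertible. A direct expansion also verifies $4 b_8 = b_2 b_6 - b_4^2$, which is the content of the definition of $b_8$; equivalently $b_8 = a_1^2 a_6 + 4 a_2 a_6 - a_1 a_3 a_4 + a_2 a_3^2 - a_4^2$.

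Next I would use that the model $Y^2 = f(x)$ is singular precisely when $f$ has a repeated root, i.e.\ when the discriminant of the cubic $f$ vanishes. Applying the classical formula for the discriminant of a cubic $\alpha x^3 + \beta x^2 + \gamma x + \delta$ to $f$ and simplifying the result with the relation $4 b_8 = b_2 b_6 - b_4^2$, one finds that this discriminant equals a fixed integer multiple, namely $16$, of the expression $-b_2^2 b_8 - 8 b_4^3 - 27 b_6^2 + 9 b_2 b_4 b_6$. Thus this expression is, up to the standard normalization, the discriminant of the Weierstrass model, which is the assertion. As a sanity check, specializing to the short model $y^2 = x^3 + a_4 x + a_6$ gives $b_2 = 0$, $b_4 = 2a_4$, $b_6 = 4a_6$, $b_8 = -a_4^2$, and the formula collapses to $\Delta = -16(4a_4^3 + 27 a_6^2)$.

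Finally, to reach an \emph{arbitrary} ring $R$ --- in particular to cover characteristics $2$ and $3$, where completing the square or the cube is not available --- I would regard the claimed formula as an identity of polynomials with integer coefficients in the indeterminates $a_1, \dots, a_6$. It suffices to check it over $\CC$, where one may further complete the cube to reach the short form $y^2 = x^3 + Ax + B$ with $\Delta = -16(4A^3 + 27 B^2)$, express $A$ and $B$ in terms of the $b_i$, and compare. Once the identity is verified in $\QQ[a_1,\dots,a_6]$ it holds in $\ZZ[a_1,\dots,a_6]$ and hence specializes to any $R$. The main obstacle is purely computational: matching the degree-twelve polynomial coming from the cubic discriminant against the stated expression in the $b_i$ is a lengthy but entirely mechanical expansion, and the only conceptual subtlety --- small characteristics --- is precisely what the universality argument disposes of.
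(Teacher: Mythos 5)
The paper offers no proof of this proposition: it is quoted as a standard fact from \cite{AEC:I}, where the displayed formula is essentially the \emph{definition} of $\Delta$ (together with the convention fixing the normalization). Your verification is correct and is the standard argument. The completed square $(2y+a_1x+a_3)^2=4x^3+b_2x^2+2b_4x+b_6$ does force the stated $b_2,b_4,b_6$; the classical discriminant of the cubic $4x^3+b_2x^2+2b_4x+b_6$ expands to $144\,b_2b_4b_6-4b_2^3b_6+4b_2^2b_4^2-128\,b_4^3-432\,b_6^2$, which is exactly $16\bigl(-b_2^2b_8-8b_4^3-27b_6^2+9b_2b_4b_6\bigr)$ once $4b_8=b_2b_6-b_4^2$ is substituted; and your expression $b_8=a_1^2a_6+4a_2a_6-a_1a_3a_4+a_2a_3^2-a_4^2$ confirms that $b_8$ is an integer polynomial, so the whole formula lives in $\ZZ[a_1,\dots,a_6]$ and specializes to any ring $R$.

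One caveat worth keeping straight, though it is not a gap in the proof of the proposition as stated: the universality argument transports the \emph{polynomial identity} to characteristic $2$ and $3$, but it does not by itself transport the geometric assertion that $\Delta=0$ detects singularity there, since in characteristic $2$ the model $Y^2=f(x)$ is degenerate and not isomorphic to $E$ (that criterion needs a separate direct check in characteristic $2$, as in \cite{AEC:I}). Because the proposition only asserts the computational recipe for $\Delta$, your argument suffices for what is claimed.
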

Applying the above substitutions we get that the discriminant
of \[ E':y^2+a_1'xy+a_3'y=x^3+a_2'x^2+a_4'x+a_6'\]
is $\Delta'=u^{-12} \Delta$.
If $K$ is a local field with valuation $v$, and $E/K$ is an elliptic curve
over $K$, then one can choose a
Weierstrass model for $E$ such that $v(\Delta_E)$ is minimal.
If $K$ is a global field, and $E$ is defined over $K$, then we define
the minimal discriminant ideal 
\[ D_{E/K}= \prod_{v \in M_K} p_v^{v(\Delta_{E \times K_v})},\]
to be the product of minimal discriminants at each local place.
If $K$ is a global field with class number $1$, then one can choose a
Weierstrass model of $E$ such that $\Delta_E {\mathcal{O}}_K=D_{E/K}.$
If $K=\QQ$ then the number $\Delta_E$ is unique, and we can give an
interpretation for the sign of this number.
\begin{prop}
    Let $E/Q$ be an elliptic curve over the rationals. Then $E(\RR)$ has
    only one component if and only if $\Delta_E<0.$
\end{prop}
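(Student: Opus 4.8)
The plan is to reduce to a short Weierstrass model and then read off the number of real components from the number of real roots of the defining cubic. First I would put $E$ in the form $y^2 = f(x)$ with $f(x) = x^3 + ax + b$ for real $a, b$; this is possible over $\QQ$ (hence over $\RR$) since we may complete the square in $y$ and the cube in $x$, dividing only by $2$ and $3$. Under an admissible change of variables the discriminant scales by $u^{-12}$, and since any real $u \neq 0$ gives $u^{12} > 0$, the \emph{sign} of $\Delta_E$ is an invariant of $E/\RR$; so I may compute with the short model. For this model the formula of the preceding proposition gives $\Delta = -16(4a^3 + 27b^2)$, which has the same sign as the classical discriminant $\mathrm{disc}(f) = -4a^3 - 27 b^2$ of the cubic.

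Next I would carry out the topological analysis of $E(\RR)$. Since $\Delta_E \neq 0$ the curve is smooth, so $E(\RR)$ is a compact one-dimensional real Lie group, hence a disjoint union of circles; the point at infinity $O$ is the identity, and the affine real points are exactly $\{(x,y) : y^2 = f(x)\}$, which lie above the set $\{x : f(x) \geq 0\}$. The monic real cubic $f$ has either one real root or three distinct real roots (it cannot have a repeated root, as that would force $\Delta = 0$). If $f$ has a single real root $\alpha$, then $f \geq 0$ exactly on the ray $[\alpha,\infty)$, and the corresponding branch of the curve, closed up through $O$, is a single circle, so $E(\RR)$ has one component. If $f$ has three real roots $\alpha_1 < \alpha_2 < \alpha_3$, then $f \geq 0$ on $[\alpha_1,\alpha_2] \cup [\alpha_3,\infty)$; the bounded interval yields an oval and the unbounded ray yields a second circle through $O$, so $E(\RR)$ has two components.

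Finally I would invoke the standard fact that a monic real cubic has three real roots precisely when $\mathrm{disc}(f) > 0$ and one real root precisely when $\mathrm{disc}(f) < 0$. Combined with the sign computation above, $\Delta_E < 0$ if and only if $\mathrm{disc}(f) < 0$, if and only if $f$ has a unique real root, if and only if $E(\RR)$ has one component, which is the assertion. I expect the main obstacle to be making the component count in the second paragraph fully rigorous: one must verify that each arc over an interval on which $f \geq 0$ glues to a genuine topological circle --- in particular that the unbounded branch over $[\alpha_3,\infty)$ (or $[\alpha,\infty)$) limits to the single point $O$ at infinity and thereby closes up, and that the two branches in the three-root case are disjoint. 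The algebraic input relating $\mathrm{disc}(f)$ to the number of real roots is routine.
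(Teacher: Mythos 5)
Your argument is correct. Note that the paper states this proposition without any proof at all, treating it as a standard fact, so there is no argument in the text to compare against; what you give is the usual (and right) one: the sign of $\Delta_E$ is invariant under admissible changes of variables because the discriminant scales by $u^{-12}$, so one may pass to a short model $y^2 = f(x)$ with $f$ a monic real cubic, where $\Delta_E = 16\,\mathrm{disc}(f)$, and then $E(\RR)$ has two components exactly when $f$ has three real roots, i.e.\ exactly when $\mathrm{disc}(f) > 0$. The topological point you flag --- that the unbounded branch closes up through the single point at infinity into one circle, and that the oval over the bounded interval is a separate component --- is the only place requiring care, and it is handled correctly by your description of the sets $\{x : f(x) \ge 0\}$ in the two cases.
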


To define the conductor of an elliptic curve, we first recall that
given an elliptic curve $E/K$ and any integer $n$ coprime to the 
characteristic of $K$ we can construct a
Galois representation $\rho_{E,n}$ by studying the action of the Galois group on the
$n$-torsion points of $E$,
\[ \rho_{E,n} : \Gal(\overline{K}/K) \rightarrow \aut(E[n]) \isom \GL_2(\ZZ/n\ZZ).\]
Similarly, given a prime $l$, we can construct the Tate module of $E$ via
$T_lE=\lim_{\leftarrow} E[l^n]$. We can also study the action of the Galois
group on the Tate module
\[ \rho_{E,l^\infty} : \Gal(\overline{K}/K) \rightarrow \GL_2(\ZZ_l). \]
Let $K$ be a local field with residue characteristic $p$, and let $E$
be an elliptic curve over $K$. Let $l$ be a prime distinct from $p$.
Then we can calculate the Serre conductor of $\rho_{E,l^\infty}$. It turns
out that this conductor is independant of our choice of $l$
(see for example \cite{AECII}, or \cite{RibetStein}). 
We denote this number by $\delta(E/K)$ and we call it the conductor
of $E$.
When $\delta(E/K)=0$
we say that $E$ is unramified. In this case, $\rho_{E,l^\infty}$ is defined
by knowing the image of a $\frob_p \in \Gal(\overline{K}/K).$
Furthermore, when $E/K$ is unramified, then the minimal discriminant of
$E$ has valuation $0$.

Let $K$ be a global field and let $E$ be an elliptic curve over $K$. Then
we define the conductor of $E$ to be the product local conductors,
specifically
    \begin{eqnarray}
        \delta(E/K)= \prod_{v \in M_K^0} p_v^{\delta(E/K_v)}.
        \label{conductordefn}
    \end{eqnarray}
    Recall that the conductor of $E$ divides the discriminant of $E$ (see \cite{AECII}).
Therefore for almost all $v \in M_K^0$ we have
$E$ is unramified over $K_v$. When $E$ is unramified over $K_v$ we define
$a_v(E)$ to be
$\trace(\rho_{E,l^\infty}(\frob_v))$ for some choice of $\frob_v$.

Elliptic curves over complex numbers have a particularly easy description.
Specifically given an elliptic curve $E/\CC$ we can find a complex number 
$\tau \in \CC$ such that $E(\CC)\isom \CC/(\ZZ+\tau \ZZ)$, where the
isomorphism is in the category of complex curves. We denote $\CC/(\ZZ+\tau\ZZ)$
by $E_\tau$.

\subsection{Modular Uniformization}
Let $E/\QQ$ be an elliptic curve over the rationals. Then the conjecture of
Shimura and Taniyama as proved by Breuil, Conrad, Diamond, Taylor, Wiles, etc
says that $E$ is modular (\cite{W95}, \cite{TW95}, \cite{BCDT}).
This means that there is a normalized modular eigenform $f_E \in S_2(\Gamma_0(N))$ such 
that $T_p(f_E)=a_p(E) f_E$ for almost all primes $p$, where $a_p(E)$ is the trace of Frobenius of
$\rho_{E,l^\infty}$.
By Serre's epsilon conjecture \cite{Rib91}, the minimum number that $N$ can
be is the conductor of $E$.
Alternatively this means that there is a surjective map $\pi:X_0(N) \rightarrow E$.
We recall the construction of this map $\pi$.
For any modular eigenform $f \in S_2(\Gamma_0(N))$ with integer coefficients,
the construction in section 3 produces an Abelian variety of dimension $1$,
which is an elliptic curve. Therefore for any such $f$ we have a map
$J_0(N) \rightarrow E_f$ such that the kernel is an Abelian variety as well.
Embedding $X_0(N)$ in $J_0(N)$ we have a map $X_0(N) \rightarrow E_f$ which 
is surjective since $\pi_f:J_0(N) \rightarrow E_f$ was surjective. If we choose
$f=f_E$ we get that $E_f$ is isogeneous to $E$, and composing $X_0(N) \rightarrow E_f$ 
by this isogeny we get the desired $\pi$. 
When $\pi_f=\pi$ then we call $E$ the {\em optimal elliptic curve}.
Given an optimal elliptic curve $E$, we define $\deg(\pi_{f_E})$ to be the {\em modular
degree} of $E$. 
\begin{prop}
    Let $E/\QQ$ be an elliptic curve over rationals, and let $f$ be the
    modular form associated to $E$. Assume that $E=E_f$, that is $E$ is
    an optimal elliptic curve. Then the geometric congruence exponent of $E_f$
    is the same as the modular degree of $E$.
\end{prop}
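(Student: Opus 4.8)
The plan is to compute the isogeny $\psi:E^\dual \to E$ whose kernel is $D_{E_f}$ explicitly, and to show that it is, up to the principal polarization of $E$, multiplication by the modular degree. Write $\pi=\pi_{f_E}:X_0(N) \to E$ for the modular parametrization, so that the optimal quotient map is $\phi=\pi_*:J_0(N) \to E$ (using $\jac(E)=E$). The two key geometric inputs are the standard identity $\pi_* \circ \pi^*=[\deg\pi]$ on $E=\jac(E)$, valid for any finite morphism of curves, and the fact that $E$ is principally polarized, so that $\lambda_E:E \to E^\dual$ is an isomorphism.

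First I would establish the adjointness relation between Picard and Albanese functoriality: under the canonical theta polarization $\theta:J_0(N) \isom J_0(N)^\dual$ and the principal polarization $\lambda_E$, one has
\[ \phi^\dual = \theta \circ \pi^* \circ \lambda_E^{-1}. \]
In other words, $\pi^*$ is the dual of $\pi_*$ once the two principal polarizations are used to identify each Jacobian with its dual. This is the only content beyond formal manipulation, and is where I expect the main care to be required: it amounts to checking that the map $E \to J_0(N)$ obtained by pullback of divisor classes agrees, after transport by $\lambda_E$ and $\theta$, with the dual of the pushforward map $\pi_*$. This is classical—it is the statement that the two functorialities are adjoint with respect to the theta/intersection pairing—but it must be invoked with the correct directions and polarizations.

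Granting this, I would substitute into the definition $\psi=\phi\circ\theta^{-1}\circ\phi^\dual$ and let the composition telescope:
\[ \psi = \pi_* \circ \theta^{-1} \circ \theta \circ \pi^* \circ \lambda_E^{-1} = (\pi_* \circ \pi^*)\circ \lambda_E^{-1} = [\deg\pi]\circ\lambda_E^{-1}. \]
Since $\lambda_E^{-1}:E^\dual \to E$ is an isomorphism, it follows that
\[ D_{E_f}=\ker(\psi)=\lambda_E\big(\ker[\deg\pi]\big)=\lambda_E\big(E[\deg\pi]\big)\isom E[\deg\pi]. \]

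Finally, since $E[m]\isom(\ZZ/m\ZZ)^2$ for any nonzero integer $m$, the group $D_{E_f}$ is isomorphic to $(\ZZ/\deg\pi\,\ZZ)^2$, whose exponent is exactly $\deg\pi$. Hence the geometric congruence exponent satisfies $\widetilde{n_{E_f}}=\exp(D_{E_f})=\deg\pi$, which is by definition the modular degree of $E$. I note that the geometric congruence \emph{number} here is $n_{E_f}=\#D_{E_f}=(\deg\pi)^2$, the square of the modular degree; it is therefore essential that the proposition concerns the exponent of $D_{E_f}$ rather than its order.
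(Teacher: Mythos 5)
Your proof is correct and follows the same route as the paper: the paper simply asserts that the composite $E^\dual \to J_0(N) \to E$ ``is just multiplication by the modular degree,'' and you supply the verification via the adjointness of $\pi^*$ and $\pi_*$ under the polarizations together with $\pi_*\circ\pi^*=[\deg\pi]$, which is exactly the paper's Lemma \ref{mult}. Your closing observation that $n_{E_f}=(\deg\pi)^2$ while $\widetilde{n_{E_f}}=\deg\pi$ is also correct and consistent with the conventions of \cite{ARS}.
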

\begin{proof}
    Recall that geometric congruene exponent is the exponent of the kernel of
    \[ E^\dual \rightarrow J_0(N) \rightarrow E.\]
    We know that $E^\dual=E$ since $E$ is an elliptic curve, and one can
    check that the above composition $E \rightarrow E$ is just multiplication
    by the modular degree. Therefore the exponent of the kernel of this map
    is exactly the modular degree.
\end{proof}

We can calculate the map $X_0(N) \rightarrow E$ explicitly over $\CC$. 
Recall that $X_0(N)(\CC) = \overline{\HH}/\Gamma_0(N)$ and $E(\CC)=\CC/(\ZZ+\tau\ZZ).$
Let $f_E$ be the modular form associated to $E$. Then the map
\begin{eqnarray*}
    \begin{array}{rcccl}
        \pi : & X_0(N)(\CC) &\rightarrow & E(\CC) =\CC/\Lambda_E \\
        & z & \goto & 2\pi i \int_z^\infty f_E(z)dz & \pmod {\Lambda_E}
    \end{array}
\end{eqnarray*}
where $\Lambda_E$ is generated by $2\pi i \int_z^\infty f_E(\gamma(z))dz$
for all $\gamma \in \Gamma_0(N).$ We let $\Lambda_E \cap \RR=\Omega_E \ZZ$, and
we call $\Omega_E$ the real period of $E$. To verify that this makes sense, one
we refer the reader to \cite{Shim94}.
	
\chapter{Modular Abelian Varieties with Odd Congruence Number}  
\label{chap2}
	In this chapter we will study simple modular Abelian varieties with odd congruence
numbers. By studying the twists of modular Abelian varieties, the action of the 
Atkin-Lehner involutions, the order of cuspidal subgroup, we show that if we have 
an absolutely simple modular abelian variety with odd congruence number, then
it has 
conductor $p^\alpha$, $pq$, or $2^{1+\alpha} N$ for some positive integer
$\alpha$ and odd prime $q$.

\section{Non-Semistable Case}
\label{CMcase}
The goal of this section is to prove the following
\begin{thm}
    \label{InnerTwist}
    Let $A$ be an absolutely simple modular Abelian variety $A$ of level $N$ with
    an odd congruence number. Let $\delta_p=0$ for odd primes $p$, and $\delta_2=2$.
    Assume that $p^{2+\delta_p} | N$. Then $A$ has good reduction 
    away from $p$ and $2$. Specifically if $p$ is odd, then $N=p^s$, $N=4p^s$, or
    $N=8p^s$ for $s \geq 2$, and if $p=2$ then $N=2^s$.
\end{thm}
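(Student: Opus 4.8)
The plan is to exploit a single clean principle: a quadratic twist of a newform is congruent to it modulo $2$. If $\chi$ is a quadratic Dirichlet character then $\chi(\ell)\in\{0,\pm1\}$, and since $-1\equiv 1$ modulo any prime above $2$, the newform $f$ attached to $A$ satisfies $a_\ell(f\otimes\chi)=\chi(\ell)a_\ell(f)\equiv a_\ell(f)$ modulo every maximal ideal $\mm$ of $\TT$ of residue characteristic $2$, for all $\ell\ndiv 2\,\mathrm{cond}(\chi)$. Thus $f$ and $f\otimes\chi$ cut out the same $\mm$, i.e. the same residual representation $\bar\rho_{\mm}$. The strategy is to produce, from the hypothesis $p^{2+\delta_p}\mid N$ and from any hypothetical extra ramification of $N$, a quadratic twist $f\otimes\chi$ which (i) is a newform whose level still divides $N$, and (ii) is genuinely different from $f$. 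Then $A^\dual[\mm]$ and $B[\mm]$ are both nonzero, where $B=\ker(J_0(N)\to A)$, so Lemma~\ref{AlgCongLemm} forces $2\mid r_A$, contradicting the assumption that the congruence number is odd. (This is the non-semistable counterpart of the cuspidal-subgroup argument of Proposition~\ref{GeneralizedOgg}, which handles squarefree $N$.)

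First I would run this at the prime $p$. Let $\chi_p$ be the quadratic character ramified only at $p$, of conductor $p$ when $p$ is odd and of conductor $4$ when $p=2$. The only genuinely local input is the behavior of conductors under twisting: if $\pi_p$ has conductor exponent $c$ and $\chi$ has conductor exponent $d$, then $c\ge 2d$ guarantees that the conductor exponent of $\pi_p\otimes\chi$ is at most $c$. The constant $\delta_p$ is arranged precisely so that $p^{2+\delta_p}\mid N$ reads $c\ge 2d$, with $d=1$ for odd $p$ and $d=2$ for the conductor-$4$ character at $2$; since $\chi_p$ is unramified away from $p$, it follows that $\mathrm{cond}(f\otimes\chi_p)\mid N$. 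Therefore (ii) must fail: otherwise $f\otimes\chi_p$ is a newform of level dividing $N$, congruent to $f$ mod $2$ and distinct from it, forcing $2\mid r_A$. Hence $f\otimes\chi_p$ lies in the Galois orbit of $f$, so $f^\sigma=f\otimes\chi_p$ for some $\sigma$. Because $A$ is absolutely simple, Ribet's theory of inner twists forbids $\sigma\neq\mathrm{id}$ (such a $\sigma$ would split $A_f$ over $\QQbar$ into a proper power), so the self-twist is honest: $f$ has complex multiplication by $K=\QQ(\sqrt{p^*})$, $p^*=(-1)^{(p-1)/2}p$ (resp. by $\QQ(i)$ when $p=2$).

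With CM in hand the ramification of $N$ is severely constrained. A CM newform is dihedral at every place, hence never Steinberg, so $\ord_q(N)\neq 1$ for every prime $q$. For an odd prime $q\neq p$, $q$ is unramified in $K$, so $q\mid N$ already forces $q^2\mid N$; then twisting by the conductor-$q$ character $\chi_q$ descends to level $N$ (now $c\ge 2=2d$), and $f\otimes\chi_q\neq f$ in the Galois orbit because $\chi_q$ is not the CM character of $K$ (equivalently $\chi_q\circ\mathrm{Nm}_{K/\QQ}\neq 1$). This yields a fresh mod-$2$ congruence at level $N$, a contradiction; hence no odd prime other than $p$ divides $N$. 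It remains to bound $\ord_2(N)$ when $p$ is odd: the value $1$ is excluded (no Steinberg), and $\ord_2(N)\ge 4$ is excluded because then the conductor-$4$ twist $f\otimes\chi_4$ descends to level $N$ and, being distinct from $f$ (otherwise $f$ would also have CM by $\QQ(i)\neq K$), produces a congruence. This leaves $\ord_2(N)\in\{0,2,3\}$, i.e. $N\in\{p^s,4p^s,8p^s\}$, while $p=2$ gives $N=2^s$.

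The main obstacle is the local conductor bookkeeping underlying every descent step: verifying that $p^{2+\delta_p}\mid N$ is exactly the inequality $c\ge 2d$ needed so that the twist by $\chi_p$ does not raise the level, and that the analogous inequalities govern the $\chi_q$ and $\chi_4$ twists. A secondary delicate point is upgrading the inner twist to genuine CM, which needs Ribet's description of $\End^0_{\QQbar}(A_f)$ together with the absolute simplicity of $A$; one should also check that $\bar\rho_{\mm}$ is irreducible, so that Lemma~\ref{AlgCongLemm} and the self-twist analysis apply without degenerate cases. Everything else is formal.
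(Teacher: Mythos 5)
Your overall strategy---twist by the quadratic character of conductor $p$ (or $4$), note that the twist is congruent to $f$ modulo $2$, and use the odd congruence number via Lemma \ref{AlgCongLemm} to force the twist into the Galois orbit of $f$---is exactly the paper's. But there is a genuine error at the pivotal step. You claim that because $A$ is absolutely simple, Ribet's theory forbids $f^\sigma=f\otimes\chi_p$ with $\sigma\neq\id$, so that the self-twist must be honest CM. This is backwards. A nontrivial inner twist makes $D=\End_{\QQbar}(A)\otimes\QQ$ a crossed-product algebra over $F=E^{\Gamma}$, which is \emph{either} a matrix algebra (and then $A$ does split over $\QQbar$) \emph{or} a quaternion division algebra over $F$---and in the latter case $A$ remains absolutely simple. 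Such quaternionic-multiplication examples exist, so one cannot reduce to the CM case. The paper keeps both branches alive: in the CM branch potentially good reduction everywhere is classical, and in the quaternionic branch it invokes Ribet's theorem (Theorem \ref{PotGoodRed}) that an inner twist together with $D$ not a matrix algebra forces potentially good reduction everywhere. Everything you subsequently deduce from CM (dihedral at every place, hence no Steinberg primes; ``$\chi_q$ is not the CM character'') therefore covers only one of the two cases.

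The same issue resurfaces at the auxiliary primes. When $q^2\mid N$ for an odd $q\neq p$ (or $2^4\mid N$), producing the twist $f\otimes\chi_q$ of level dividing $N$ yields an even congruence number only if $f\otimes\chi_q$ lies \emph{outside} the Galois orbit of $f$; you rule out $f\otimes\chi_q=f$, but not $f\otimes\chi_q=f^\tau$ for $\tau\neq\id$, which is a second inner twist and produces no congruence at all. The paper closes this loophole by counting: two independent twisting primes give $\Gamma\supseteq\{\gamma_1,\gamma_p,\gamma_q,\gamma_{pq}\}$, hence $[E:F]\geq 4$, which forces $D$ to be a matrix algebra and contradicts absolute simplicity via Lemma \ref{TateLemma}. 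Your conductor bookkeeping for the descent of the twists and the use of the mod-$2$ congruence are fine, but without the potentially-good-reduction theorem and the inner-twist count the argument does not go through.
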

We expect that something stronger is true. Specifically, the theorem should be
true without the absolutely simple assumption, however at this moment we don't know
how to overcome the difficulty with inner forms in that case. 
To prove this theorem we use the technique of Calegari and Emerton to
show that such modular Abelian varieties have inner twists
by a character of conductor $p$ \cite{CE}. 
Using results of Ribet on inner
twists \cite{RibetTwists}, we will prove that $A$ must have potentially good reduction
everywhere if $A$ is absolutely simple, and ultimately $A$ has good reduction away from
$p$. A key part of this argument is the following
\begin{lemma}
    If $\End_{\QQbar}(A)\otimes \QQ$ is a matrix algebra then
    $A$ is not absolutely simple.
    \label{TateLemma}
\end{lemma}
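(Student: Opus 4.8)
The plan is to prove the contrapositive in the following precise form: if $A$ is absolutely simple, then $R := \End_{\QQbar}(A)\otimes\QQ$ is a division algebra, and a nontrivial matrix algebra is never a division algebra. Here I read ``matrix algebra'' as $M_n(D)$ with $n\geq 2$, since for $n=1$ the statement is false — an absolutely simple abelian variety with complex multiplication has $R$ equal to a CM field, which is an honest (commutative) division algebra. So the content of the lemma is really that the presence of a genuine $n\times n$ block structure with $n\geq 2$ obstructs absolute simplicity.

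First I would pass to $\QQbar$, where absolute simplicity of $A$ says exactly that $A_{\QQbar}$ is a simple abelian variety. For a simple abelian variety, every nonzero endomorphism $\phi$ has finite kernel: otherwise the identity component $(\ker\phi)^0$ would be a nonzero proper abelian subvariety, contradicting simplicity. Hence every nonzero $\phi$ is an isogeny, so it becomes invertible in $R$, and therefore $R$ is a division algebra. On the other hand $M_n(D)$ with $n\geq 2$ always contains nonzero zero divisors — for instance the elementary matrix $E_{12}$ satisfies $E_{12}^2=0$ — so it cannot be a division algebra. This contradiction establishes the lemma.

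Alternatively, and more geometrically, I would exploit the nontrivial idempotent $e=E_{11}\in M_n(D)$ directly: choosing an integer $m$ that clears denominators so that $me$ and $m(1-e)$ lie in $\End_{\QQbar}(A)$, the images $\mathrm{im}(me)$ and $\mathrm{im}(m(1-e))$ are abelian subvarieties whose product is isogenous to $A_{\QQbar}$, because $me+m(1-e)=m\cdot\id$ is an isogeny; both factors are nonzero as soon as $n\geq 2$, which exhibits $A_{\QQbar}$ as non-simple. The only point requiring care — and the step I expect to be the real crux — is this passage from an idempotent of the $\QQ$-algebra $R$ to an honest isogeny decomposition of $A_{\QQbar}$; this is precisely Poincaré complete reducibility, and once it is invoked the remainder of the argument is purely formal.
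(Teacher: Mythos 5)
Your proof is correct and, in its second (idempotent) form, is essentially the paper's own argument: the paper takes orthogonal projections $P_1,P_2$ with $P_1P_2=0$, clears denominators to land in $\End_{\QQbar}(A)$, and uses that a nonzero endomorphism of a simple abelian variety has image all of $A$ to force one projection to vanish. Your first formulation (simple $\Rightarrow$ division algebra, and $M_n(D)$ with $n\geq 2$ has zero divisors) is the same idea repackaged, and your remark that ``matrix algebra'' must be read as $n\geq 2$ correctly identifies the implicit convention the paper inherits from Ribet's trichotomy for the endomorphism algebra.
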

\begin{proof}
    Assume that $R=\End_{\QQbar}(A)\otimes \QQ$ is a matrix algebra. Then
    we can find projections $P_1, P_2 \in R$ such that $P_1+P_2=\id$,
    $P_1 P_2=0$ and $P_1, P_2 \not \in \{ 0, \id\}.$
    Now for some integer $n$ we have that 
    $nP_i \in \End_{\QQbar}(A)$. If we assume that $A$ is absolutely
    simple, we get that image of $nP_i$ must be $A$ or $0$. However since
    $(nP_1)(nP_2)=n^2P_1P_2=0$ we get that one of them must be $0$, say 
    $nP_2=0$ in ${\QQbar}(A)$. This implies that $P_2=0$,
    which contradicts our assumption that $P_2 \not \in \{0, \id\}.$
    Therefore $A$ is not absolutely simple.
\end{proof}

This lemma is used in conjunction with Ribet's result on the endomorphism algebra
of modular Abelian varieties with inner twist. Specifically let $A$ be a simple modular
Abelian variety of dimension $d$. Associated to $A$ are $d$ modular forms, Galois 
conjugate to each other. 
Let $f=\sum a_n q^n$ be
a modular form of level $N$ and weight $2$, associated to $A$.
Let $E=\QQ(\dots,a_n,\dots)$ be the field of
definition of $f$. Then we know that 
$\End_{\QQ}(A) \otimes \QQ=E$. Let $D=\End_{\QQbar}(A)\otimes \QQ$ be the algebra
of all endomorphisms of $A$. Then one easily sees that $E$ is its own commutant in $D$,
and therefore $D$ is a central simple algebra over some subfield $F$ of $E$ (see \cite{RibetEndo}).
If we assume that $A$ is absolutely simple, then $D$ must be some division algebra 
with center $E$.
Using \cite{RibetEndo} we have that $D$ must be either $E$ (which forces $E=F$) or a 
quaternion division algebra over $F$ (which forces $E$ to be a quadratic extension of $F$).
The following theorem of \cite{RibetEndo} gives us potentially good reduction everywhere.
\begin{thm}
    Suppose that $A$ has an inner twist, and that $D$ is not a matrix algebra over $F$.
    Then $A$ has potentially good reduction everywhere.
	\label{PotGoodRed}
\end{thm}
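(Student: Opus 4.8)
The plan is to localize at each finite place and translate ``potentially good reduction'' into the vanishing of a toric part, after which the extra endomorphisms supplied by the inner twist will force that toric part to be trivial by a dimension count. Fix a finite place $v$. By the semistable reduction theorem I may choose a finite extension $L$ of $\QQ_v$ over which $A$ acquires semistable reduction and over which every element of $D=\End_{\QQbar}(A)\otimes\QQ$ is defined (the latter is possible because each $\QQbar$-endomorphism is already defined over some number field). Writing $\mathcal{A}$ for the N\'eron model of $A_L$ and $\tilde{A}^0$ for the connected component of its special fibre, I get an exact sequence $0\to T\to \tilde{A}^0\to B\to 0$ with $T$ a torus and $B$ an abelian variety. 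Potentially good reduction at $v$ is exactly the assertion $T=0$, so it suffices to show $\dim T=0$ at every $v$.

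Next I would record the structure of $D$ under the hypotheses. The inner twist, together with Ribet's analysis of the endomorphism algebra \cite{RibetEndo} recalled above, makes $D$ a central simple algebra over the subfield $F\subseteq E$ with $E$ a maximal subfield; since $A$ is absolutely simple $D$ is a division algebra, and as we have excluded the matrix case $M_2(F)$ we are in the quaternion division case $[D:F]=4$, $[E:F]=2$. Using $\dim A=[E:\QQ]=d$ this forces $[F:\QQ]=d/2$ and hence $\dim_\QQ D=4\cdot(d/2)=2d$.

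The heart of the argument is then a dimension count on the character group $X=\Hom(T,\mathbb{G}_m)$. Because endomorphisms of $A_L$ extend to $\mathcal{A}$ and act functorially on $\tilde{A}^0$ and on its characteristic toric part $T$, they act contravariantly on $X$, giving a unital algebra map $D^{\op}\to \End(X)\otimes\QQ\isom M_t(\QQ)$, where $t=\dim T=\operatorname{rank}X\le \dim A=d$. If $t>0$ this map is injective, since a unital homomorphism out of the simple algebra $D^{\op}$ has trivial kernel, so $X\otimes\QQ$ becomes a faithful $D^{\op}$-module; as $D^{\op}$ is again a division algebra of dimension $2d$ over $\QQ$, every nonzero module over it is free, whence $t=\dim_\QQ(X\otimes\QQ)$ is a positive multiple of $2d$ and in particular $t\ge 2d$. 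This contradicts $t\le d<2d$, so $t=0$ and $A_L$ has good reduction. Since $v$ was arbitrary, $A$ has potentially good reduction everywhere.

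The step deserving the most care---and the main obstacle---is the middle claim that $D$ genuinely acts on $X$ with the dimensions as advertised. Two points must be verified: first, that after base change to $L$ all of $D$ is defined and its action on $A_L$ propagates to $\mathcal{A}$ and to the toric part, which is the functoriality of N\'eron models together with the fact that $T$ is a characteristic subgroup of $\tilde{A}^0$; and second, the bookkeeping $\dim A=[E:\QQ]$ and $[E:F]=2$, which is precisely what produces the decisive gap $\dim_\QQ D=2d>d\ge\dim T$. The inner twist hypothesis enters only through this bookkeeping: it is exactly what allows $F$ to be a proper subfield of $E$ and endows $D$ with its quaternionic structure, so that the division algebra is simply ``too large'' to act faithfully on the at most $d$-dimensional character lattice. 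Nothing beyond potential semistability and this gap is used at any place, which is why the conclusion holds at every place simultaneously.
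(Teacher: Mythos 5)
Your proof is correct, but note that there is nothing in the paper to compare it against: the statement is quoted as a theorem of Ribet \cite{RibetEndo} and no proof is given in the text. What you have written is, in substance, the standard (indeed Ribet's own) argument. You pass to a local field $L$ of semistable reduction over which all of $D=\End_{\QQbar}(A)\otimes\QQ$ is defined, let $T$ be the toric part of the identity component of the special fibre of the N\'eron model, and observe that if $T\neq 0$ then $D^{\op}$, being simple and acting unitally, acts faithfully on $X(T)\otimes\QQ$; since the hypotheses force $D$ to be a quaternion division algebra over $F$ with $[E:F]=2$ and $[E:\QQ]=\dim A=d$, you get $\dim_\QQ D=2d$, while $\operatorname{rank}X(T)=\dim T\le d$, and freeness of modules over a division algebra gives the contradiction $2d\le d$. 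The supporting facts are all correctly deployed: endomorphisms extend to the N\'eron model by the universal property, preserve the identity component of the special fibre, and preserve its maximal subtorus because a torus admits no nonzero homomorphism to an abelian variety; and a unital homomorphism out of a simple algebra into a nonzero ring is injective. The one bookkeeping point worth flagging is that the identity $\dim A=[E:\QQ]$ is special to modular abelian varieties $A_f$ (it is exactly how the surrounding discussion in the paper sets things up), so your argument proves the theorem in the generality in which the paper actually uses it; for a general absolutely simple abelian variety one would instead say that $E$ is a maximal subfield of the division algebra $D$ acting on $A$, which still yields $\dim_\QQ D>\dim A\ge\dim T$ by the usual bound $[E:\QQ]\mid 2\dim A$ together with the faithfulness of the action on $H_1$, but that refinement is not needed here.
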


We get the following corollary.

\begin{cor}
	Let $A$ be an absolutely simple modular Abelian variety $A$ of level $N$
	with odd congruence number. Let $\delta_p=0$ for odd primes, and $\delta_2=2$.
    Assume that $p^{2+\delta_p} | N$.  
    Then $A$ has potentially good reduction everywhere. Specifically, for 
	any other prime number $q$ if $q | N$ then $q^2 |N$.
\end{cor}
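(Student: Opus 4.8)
The plan is to assemble, in order, the four ingredients the section has set up: a nontrivial inner twist produced by the Calegari--Emerton technique, the structure of the endomorphism algebra forced by absolute simplicity together with Lemma \ref{TateLemma}, Ribet's Theorem \ref{PotGoodRed}, and finally a purely local translation of ``potentially good reduction'' into a lower bound on the conductor exponent at each bad prime. Throughout I write $E=\End_{\QQ}(A)\otimes\QQ$ and $D=\End_{\QQbar}(A)\otimes\QQ$, and I recall from the discussion above that $E$ is its own commutant in $D$ and that $D$ is central simple over a subfield $F\subseteq E$.

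First I would record the inner twist. Under the running hypotheses (odd congruence number and $p^{2+\delta_p}\mid N$), the Calegari--Emerton argument invoked above produces a nontrivial inner twist of $A$ by a Dirichlet character of conductor $p$; this is the only place the arithmetic hypotheses enter. A character of conductor $p>1$ is nontrivial, so the associated twisting automorphism of $E$ is nontrivial, the fixed field satisfies $F\subsetneq E$, and in particular $D\neq E$. Next I would pin down $D$. Since $A$ is absolutely simple, Lemma \ref{TateLemma} shows that $D$ has no nontrivial idempotents, hence is a division algebra; combined with $D\neq E$ and the classification recalled above, $D$ must be the quaternion division algebra over $F$ with $[E:F]=2$. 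A quaternion division algebra is non-split, hence not a matrix algebra over $F$, so both hypotheses of Theorem \ref{PotGoodRed} are satisfied and $A$ has potentially good reduction everywhere.

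It remains to convert this into the divisibility claim. Fix $q\mid N$, so $A$ has bad reduction at $q$, and let $\rho$ be one of the two-dimensional $\lambda$-adic representations attached to $A$, with $\lambda$ lying over a prime $\ell\neq q$. Potentially good reduction at $q$ means that inertia $I_q$ acts on $\rho$ through a finite quotient, hence semisimply and with no monodromy contribution. Because $A$ has trivial nebentypus, $\det\rho$ is the cyclotomic character, so $\det\rho|_{I_q}$ is trivial. If $\rho^{I_q}$ were nonzero, the invariant line together with $\det\rho|_{I_q}=1$ would force $I_q$ to act trivially, i.e.\ good reduction at $q$, a contradiction; hence $\rho^{I_q}=0$. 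The conductor exponent of $\rho$ at $q$ is then $\dim\rho-\dim\rho^{I_q}$ plus a nonnegative wild term, so it is at least $2$. Since this exponent equals $\ord_q(N)$, we conclude $q^2\mid N$.

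I expect the substantive obstacle to lie entirely in the first step: producing the inner twist by a character of conductor $p$ from the hypotheses via the Calegari--Emerton method is where the real content sits, and it is what the rest of the section is organized around on the way to Theorem \ref{InnerTwist}. A secondary subtlety worth flagging is that the final step must be carried out representation-by-representation rather than on $A$ as a whole: applying Grothendieck's conductor formula to $A$ directly gives only $\dim A\cdot\ord_q(N)=\ord_q(\operatorname{cond} A)\geq 2$, which for $\dim A>1$ yields merely $\ord_q(N)\geq 1$. It is the triviality of the nebentypus on each two-dimensional constituent that upgrades the bound to the asserted $q^2\mid N$.
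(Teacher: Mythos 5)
Your overall architecture matches the paper's --- inner twist, then Lemma \ref{TateLemma} to force $D$ to be a division algebra, then Theorem \ref{PotGoodRed}, then a local argument at each bad prime $q$ --- but the step you defer to ``the Calegari--Emerton argument invoked above'' is precisely the content of this proof; it is not established anywhere else in the paper, so leaving it as a citation is a genuine gap. The missing argument is short but essential: let $\chi$ be the quadratic character of conductor $p$. The hypothesis $p^{2+\delta_p}\mid N$ guarantees that $\chi\otimes f_A$ is again an eigenform in $S_2(\Gamma_0(N))$ (trivial nebentypus, same level). Because $\chi$ takes values in $\{\pm 1\}$, one has $\chi\otimes f_A\equiv f_A\pmod{\lambda}$ for every prime $\lambda\mid 2$ of the coefficient field. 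If $\chi\otimes f_A$ were not a Galois conjugate of $f_A$, this would be a mod $2$ congruence between $f_A$ and a form outside its conjugacy class, forcing $2\mid r_A$; so oddness of the congruence number forces $\chi\otimes f_A=\gamma(f_A)$ for some $\gamma\in\hom(E,\CC)$. That is where the hypothesis ``odd congruence number'' actually enters, and your proposal never uses it.

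A second, smaller problem: you assert that twisting by a character of conductor $p>1$ yields a nontrivial automorphism of $E$, hence $F\subsetneq E$ and $D\neq E$. That conflates nontriviality of $\chi$ with nontriviality of $\gamma$. The case $\gamma=\id$, i.e.\ $\chi\otimes f_A=f_A$, is perfectly possible; it is the CM case, and the paper disposes of it separately (complex multiplication gives potentially good reduction directly) before running the quaternion-algebra argument in the remaining case. Your final step --- conductor exponent at $q$ is at least $2$, via triviality of $\det\rho$ on inertia and semisimplicity of the inertia action under potentially good reduction --- is correct and is a more detailed route than the paper's one-line observation that $\ord_q(N)=1$ would mean multiplicative reduction, which persists over every field extension; either version is fine, and your remark about working constituent-by-constituent rather than with $A$ as a whole is well taken.
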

\begin{proof}
	Assume that $A$ has dimension $d$, and let $f_A=\sum a_nq^n \in \CC((q))$
	be a normalized eigenform associated to $A$. Let $E=\QQ(\dots,a_i,\dots) \subset \CC.$ 
    Let $\chi$ be the quadratic character of conductor $p$.
    Since $p^{2+\delta_p} | N$, we get that $\chi \otimes f_A$ is 
    another modular eigenform
	in $S_2(\Gamma_0(N))$ (see \cite{Shim94}). Since $\chi$ is a quadratic character, $\chi$ takes 
	values in $\pm 1$, as result $\chi \otimes f_A \equiv f_A \pmod \lambda$ for any 
    $\lambda | 2$. 
	If $A$ has odd congruence number, then $\chi \otimes f_A$ must be in the same conjugacy 
    class as $f_A$. If $\chi \otimes f_A = f_A$ then $A$ has complex multiplication
	by $\chi$, and hence $A$ has potentially good reduction everywhere.
	In general $A$ might have an inner twist, and $\chi \otimes f_A = \gamma(f_A)$ for some 
    $\gamma \in \hom(E,\CC)$. Let $\Gamma \subset \hom(E,\CC)$ such that for any
    $\gamma \in \Gamma$ we can find a character $\chi_{\gamma}$ such that 
    $\chi_\gamma \otimes f_A = \gamma(f_A)$. By \cite{RibetTwists} we get that $F=E^\Gamma$,
    and as discussed above, $D=\End_{\QQbar}A\otimes \QQ$ must be a quaternion algebra.
    However, using theorem \ref{PotGoodRed} we get that $A$ has potentially
    good reduction everywhere, as desired.

	The final claim of the lemma follows by noting that if $q |N$ but $q^2 \ndiv N$, then 
	$A$ has multiplicative reduction over any field extension.
\end{proof}

We now ask what happens if $p^{2+\delta_p} |N$ and $q^{2+\delta_q} | N$ for $p$ and $q$ 
distinct primes. In this case, 
$A$ has more inner twists, and the subset $\Gamma \subset \hom(E,\CC)$ will have at
least four elements, $\gamma_1, \gamma_p, \gamma_q,$ and $\gamma_{pq}$. But that means
that $|E:F|\geq 4$, which shows $D$ must be a matrix algebra. However, lemma \ref{TateLemma}
forces $A$ not to be absolutely simple, which contradicts our assumption.
This completes the proof of the main theorem in this section.

\section{Atkin-Lehner Involution}
\label{sec22}
The goal of this section is to prove the following generalization of the main
theorem of Calegari and 
Emerton \cite{CE}.
\begin{thm}
	Let $A$ be a new simple modular Abelian variety with odd geometric congruence number.
	Assume that $A$ has no two torsion points. Then the conductor of
	$A$ is a power of a prime.
	\label{thmCE1}
\end{thm}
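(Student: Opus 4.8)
The plan is to argue by contradiction: assume the conductor $N$ is divisible by at least two distinct primes, and produce a nonzero rational point of order $2$ on $A$, contradicting the hypothesis. The first step is to reduce to the case where $N$ is square free. If $p^{2+\delta_p}\mid N$ for some prime $p$, I would invoke Theorem \ref{InnerTwist} from Section \ref{CMcase}; to feed the oddness hypothesis into that machinery I would pass from the geometric to the algebraic congruence exponent using Theorem \ref{ARSCongExp}, whose part (2) gives $\ord_p(\widetilde{r_A})=\ord_p(\widetilde{n_A})$ whenever $p^2\nmid N$. The residual levels $N=4p^s,8p^s$ produced by Theorem \ref{InnerTwist} are not prime powers and must be excluded separately by the cuspidal analysis at the prime $2$; for the remainder of the plan I focus on the square free heart of the argument.

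So write $N=p_1\cdots p_t$ with $t\geq 2$, let $\phi:J_0(N)\rightarrow A$ be the optimal quotient, and let $A^\dual\hookrightarrow J_0(N)$ be the complementary abelian subvariety. Since $A$ is new and simple, each Atkin--Lehner involution $w_{p_i}$ commutes with $\TT$ on $J_0(N)_{\new}$, hence descends to $A$ and acts there as a scalar $\epsilon_i=\pm 1$, namely the Atkin--Lehner eigenvalue of the newform attached to $A$. In the tensor description $S\isom W_1\otimes\cdots\otimes W_t$ of Lemma \ref{cusplemma2}, the operator $w_{p_i}$ swaps $f_{i,0}\leftrightarrow f_{i,1}$, so the cuspidal element $z=\sum_{d\mid N}\brk{\prod_{p_k\mid d}b_k}P_d$ of Proposition \ref{GeneralizedOgg} is a simultaneous eigenvector with $w_{p_i}z=b_iz$. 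Because the cusps $P_d$ are rational and $\phi$ is defined over $\QQ$, we have $\phi(z)\in A(\QQ)_{\tors}$. Now choose the signs $(b_1,\dots,b_t)$ so that $b_i=-\epsilon_i$ for some index $i$ (with at least one $b_k=-1$, as Proposition \ref{GeneralizedOgg} requires). Applying $\phi$ to $w_{p_i}z=b_iz$ and using that $w_{p_i}$ acts as $\epsilon_i$ on $A$ gives $\epsilon_i\phi(z)=b_i\phi(z)$, i.e.\ $(\epsilon_i-b_i)\phi(z)=0$; since $\epsilon_i-b_i=\pm2$, the point $\phi(z)$ lies in $A(\QQ)[2]$.

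To finish I must show that some such $\phi(z)$ is nonzero, which is where the oddness of the congruence number is consumed. By Proposition \ref{GeneralizedOgg} the order of $z$ in the cuspidal subgroup is $\num\brk{\prod_i (p_i+b_i)/24}$; as each $p_i$ is odd and $t\geq 2$, the product $\prod_i(p_i+b_i)$ is divisible by $4$, so $z$ is a cuspidal point of even order. The task is then to prove that $\phi$ does not annihilate the $2$-primary part of $z$: if it does not, then $\phi(z)$ is a nonzero element of $A(\QQ)[2]$ and we contradict the no-two-torsion hypothesis, whence $N$ must be a prime power. The nonvanishing is the genuine obstacle. The point $z$ sits in an Atkin--Lehner eigencomponent distinct from that of $A^\dual$, so naively its image is killed up to isogeny; what controls whether the $2$-part survives integrally is precisely the congruence between $A^\dual$ and $\ker(\phi)=B$ at the prime $2$, and the interplay of this with $D_A=\ker(\phi)\cap A^\dual$. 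I expect the crux to be a mod-$2$ Eisenstein-type statement: the even part of the cuspidal subgroup grows with the number $t$ of prime factors, and odd $n_A$ forces this even torsion to be detected either in $A(\QQ)[2]$ or in $D_A$, neither of which can absorb it once $t\geq 2$. Making this dichotomy precise, together with the clean disposal of the levels $4p^s$ and $8p^s$ in the reduction step, is the hard part of the argument.
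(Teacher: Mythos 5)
Your argument reaches a rational $2$-torsion point $\phi(z)\in A[2](\QQ)$ cleanly enough, but the step you yourself flag as ``the genuine obstacle'' --- showing $\phi(z)\neq 0$ --- is exactly where the proof lives, and you do not supply it. The paper's mechanism is much softer than the mod-$2$ Eisenstein statement you are reaching for: one picks a nontrivial Atkin--Lehner involution $w$ whose covariant action on $A$ is $+1$ (possible as soon as $N$ is not a prime power, since the sign character on the Atkin--Lehner group $(\ZZ/2)^t\to\{\pm1\}$ has nontrivial kernel once $t\geq 2$); then $\alb(w)$ is translation by a rational point $P$ with $2P=0$, and $P\neq 0$ because otherwise the Albanese action of $w$ on $A$ would be trivial, forcing $\phi$ to factor through $\jac(X_0(N)/w)$ (Proposition \ref{albquotient}); dualizing and using that $X_0(N)\to X_0(N)/w$ has degree $2$ (Lemma \ref{mult}) then shows the congruence map $A^\dual\to A$ kills $A^\dual[2]$, so the geometric congruence exponent would be even. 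That factorization through the degree-$2$ quotient curve is the idea your sketch is missing; note that $P$ is itself the image of a cuspidal divisor, $P=\phi(w(P_1)-P_1)$, so your element $z$ is close to the right object --- the gap is purely the nonvanishing, and without Lemma \ref{lemCE} (or a worked-out substitute) the proof is incomplete.

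Your preliminary reduction to square-free level is also both unnecessary and shaky. Unnecessary, because the Atkin--Lehner argument above needs only that $N$ have at least two distinct prime factors, with no square-freeness hypothesis. Shaky, because Theorem \ref{InnerTwist} assumes $A$ absolutely simple and controls the algebraic congruence number, whereas the present theorem assumes only simplicity and odd \emph{geometric} congruence number; Theorem \ref{ARSCongExp}(2) transfers parity between $\widetilde{n_A}$ and $\widetilde{r_A}$ only at primes $p$ with $p^2\nmid N$, so it cannot be invoked at $p=2$ when $4\mid N$, and the residual levels $4p^s$ and $8p^s$ --- which are not prime powers --- are left unhandled in your plan.
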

This theorem was proved by Calegari and Emerton in the case when $A$ is
an elliptic curve. Here we apply their techniques to higher dimensional 
modular Abelian varieties. We need to prove few lemmas first.

\begin{lemma}
	Let $f:X/k \rightarrow Y/k$ be a degree $m$ maps between curves.
	Then the composition
	\[\xymatrix@1{
		\jac(Y) \isom \jac(Y)^{\vee}  \ar[r]^-{f^*} & \jac(X)^\vee \isom 
		\jac(X) \ar[r]^-{f_*} &  \jac(Y) 
	}\]
	is just multiplication by $m$.
	\label{mult}
\end{lemma}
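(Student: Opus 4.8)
The plan is to recognize the composite as the pushforward--pullback map $f_*\circ f^*$ on divisor classes and then invoke the elementary degree formula. The three named isomorphisms are the canonical principal polarizations $\lambda_X:\jac(X)\isom\jac(X)^\dual$ and $\lambda_Y:\jac(Y)\isom\jac(Y)^\dual$ attached to the theta divisors, while the middle arrow is, by definition, the dual morphism $(f_*)^\dual:\jac(Y)^\dual\to\jac(X)^\dual$ of the Albanese pushforward $f_*:\jac(X)\to\jac(Y)$ supplied by Proposition \ref{albfunctor}. Thus the entire composite equals $f_*\circ\bigl(\lambda_X^{-1}\circ(f_*)^\dual\circ\lambda_Y\bigr)$, and the first step is to identify the parenthesized map with the Picard-functorial pullback $f^*:\jac(Y)\to\jac(X)$ that sends a divisor class $[D]$ to $[f^*D]$.

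This identification---that the contravariant Picard pullback is dual, under the canonical polarizations, to the covariant Albanese pushforward---is the fundamental compatibility between the autoduality of the Jacobian and functoriality in the curve. I would either cite it from the standard theory of Jacobian varieties (see \cite{langabvar}) or establish it directly from the theta-divisor description of $\lambda$, using a seesaw argument on the universal line bundle to check that $\lambda_X^{-1}\circ(f_*)^\dual\circ\lambda_Y$ and $f^*$ agree on $\Pic^0(Y)$. Granting this, the composite in the statement becomes simply $f_*\circ f^*:\jac(Y)\to\jac(Y)$.

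It then remains to compute $f_*\circ f^*$ on $\Pic^0(Y)$, and for this it suffices to work on $\overline{k}$-points with honest divisors. For a closed point $Q\in Y$ one has $f^*Q=\sum_{P\in f^{-1}(Q)} e_P\,P$, where $e_P$ is the ramification index, while $f_*P=f(P)$; hence $f_*f^*Q=\bigl(\sum_{P\in f^{-1}(Q)} e_P\bigr)Q=(\deg f)\,Q=mQ$, using that the sum of ramification indices over a fiber of a finite map of degree $m$ equals $m$. Extending linearly over divisors and passing to classes yields $f_*f^*[D]=m[D]$ for every $[D]\in\jac(Y)$, which is the assertion. I expect the only genuine obstacle to lie in the duality identification of the second paragraph; the divisor computation is routine, and the degree formula holds as stated in the separable case (in particular in characteristic $0$), with the general case recovered by weighting with the inseparable degree so that the total fiber degree is still $m$.
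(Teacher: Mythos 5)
Your proposal is correct and follows essentially the same route as the paper, whose entire proof is the one-line observation that it suffices to check the identity on the generating classes $(z)-(\infty)$ of $\jac(Y)$, where the pullback--pushforward computation gives $f_*f^*\bigl((z)-(\infty)\bigr)=m\bigl((z)-(\infty)\bigr)$. You simply carry out in full the two steps the paper leaves implicit --- the identification of the composite with the Picard-functorial $f_*\circ f^*$ via autoduality, and the fiber-degree count $\sum_P e_P=m$ --- so your writeup is a more complete version of the same argument.
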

\begin{proof}
	It suffices to verify the above lemma for points $(z) - \infty \in \jac(Y),$
	since these points generate $\jac(Y)$. The rest of the verification
	is easy. 
\end{proof}

\begin{lemma}
	Let $\phi:J_0(N)/k \rightarrow A/k$ be a modular Abelian variety, and
	let $\pi:X_0(N)/k\rightarrow A/k$ be the composition of the Albanese
	embedding of $X_0(N)$ and $\phi$. Let $w$ be an involution on
	$X_0(N)$. Assume that the covariant action of $w$ lifts to $A$,
	which by \ref{albfunctor} implies that the Albanese induced action 
	also lifts. Assume that $\alb(w):A \rightarrow A$ is trivial.
	Then the geometric congruence exponent of $A$ is even.
	\label{lemCE}
\end{lemma}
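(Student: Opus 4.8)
The plan is to exploit the hypothesis that $\alb(w)$ is trivial on $A$ in order to factor $\pi$ through the quotient curve, and then to read off a factor of $2$ in the geometric congruence map via Lemma \ref{mult}. First I would set $Y = X_0(N)/\langle w\rangle$ and let $q\colon X_0(N) \to Y$ be the quotient map, a finite map of degree $2$ since $w$ is a nontrivial involution. Because the covariant action of $w$ lifts to $A$ and $\alb(w)\colon A \to A$ is trivial, Proposition \ref{albquotient} applies with $G = \langle w \rangle$, so that $\pi = \phi\compose i$ factors through $X_0(N) \to Y \to \jac(Y)$. Using the covariant functoriality of the Albanese construction (diagram \ref{eq:1}) to write $i_Y \compose q = q_* \compose i$, together with the fact that $i(X_0(N))$ generates $J_0(N)$, I would upgrade this to a factorization of homomorphisms $\phi = \rho \compose q_*$ for some $\rho\colon \jac(Y) \to A$.

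Next I would compute the geometric congruence map $\psi\colon A^\dual \to J_0(N)^\dual \isom J_0(N) \to A$ using this factorization. Dualizing gives $\phi^\dual = (q_*)^\dual \compose \rho^\dual$, so that, writing $\theta$ for the theta polarization $J_0(N) \isom J_0(N)^\dual$,
\[ \psi = \rho \compose q_* \compose \theta^{-1} \compose (q_*)^\dual \compose \rho^\dual. \]
The heart of the matter is the middle composite $q_* \compose \theta^{-1} \compose (q_*)^\dual$: under the principal polarizations $\theta$ of $\jac(X_0(N))$ and $\theta_Y$ of $\jac(Y)$, the dual of the pushforward $q_*$ is identified with the pullback $q^*$, and Lemma \ref{mult} (the projection formula $q_* \compose q^* = [\deg q]$) gives $q_* \compose \theta^{-1} \compose (q_*)^\dual = [2]\compose \theta_Y^{-1}$. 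Since $\rho$ is a homomorphism it commutes with $[2]$, and hence $\psi = [2]\compose\psi'$, where $\psi' = \rho \compose \theta_Y^{-1} \compose \rho^\dual\colon A^\dual \to A$ is again a homomorphism of Abelian varieties.

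Finally I would conclude. Because $\psi = [2]\compose\psi'$ with $\psi'$ a homomorphism, any $x \in A^\dual[2]$ satisfies $2\psi'(x) = \psi'(2x) = 0$, whence $\psi(x) = [2]\psi'(x) = 0$; that is, $A^\dual[2] \subset \ker(\psi) = D_A$. As $\dim A \geq 1$ we have $A^\dual[2] \neq 0$, so $D_A$ contains a point of exact order $2$ and the geometric congruence exponent $\widetilde{n_A}$ is even, as claimed.

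The step I expect to be the main obstacle is making the factorization $\phi = \rho\compose q_*$ rigorous as an identity of homomorphisms rather than merely of rational maps of varieties: the Albanese construction is defined only up to a translation $c(\cdot)$, so I must verify that the base points can be chosen (or the translations absorbed into the homomorphisms) so that $\rho$ and $q_*$ are honest homomorphisms and the displayed computation of $\psi$ holds on the nose. The remaining duality bookkeeping — identifying $(q_*)^\dual$ with $q^*$ under the two theta polarizations — is precisely what Lemma \ref{mult} is engineered to supply, so once the factorization is pinned down, extracting the factor of $2$ and the even $2$-torsion in $D_A$ is immediate.
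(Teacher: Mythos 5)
Your proposal is correct and follows essentially the same route as the paper: factor $\phi$ through $\jac(X_0(N)/\langle w\rangle)$ via Proposition \ref{albquotient}, dualize, use Lemma \ref{mult} to identify the middle composite as multiplication by $2$, and conclude that $A^\dual[2]\subset\ker(\psi)=D_A$. Your extra care about the polarization identifications and about $\rho$, $q_*$ being genuine homomorphisms only makes explicit what the paper's commutative diagram leaves implicit.
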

\begin{proof}
	Let $G$ be the $2$ element group generated by the involution $w$.
	Note that $\pi \circ w$ is just the Albanese induced action of $G$
	on $A$. Therefore the conditions in the lemma are telling us that
	the induced action of $G$ on $A$ is trivial. Therefore, by lemma
	\ref{albquotient} we get that $\phi$ factors through 
	\[ \jac(X_0(N))=J_0(N) \rightarrow \jac(X_0(N)/w) \rightarrow A .\]
	Dualizing the above diagram and using the auto duality of $J_0(N)$ we
	get
	\[ \xymatrix{
	A^\vee \ar[r] \ar@{.>}[d]_\delta & \jac(X_0(N)/w)^\vee  \ar[r] \ar@{.>}[d] & J_0(N)^\vee \ar[d] \\ 
	A & \jac(X_0(N)/w) \ar[l] &  J_0(N) \ar[l] .
	} \]
	By lemma \ref{mult} the middle arrow
	is just multiplication by $2$, since degree of $X_0(N) \rightarrow X_0(N)/w$
	is $2$. Using the commutativity of the above diagram, we can see that $A^\vee[2](k) \subset \ker(\delta)$.
	Recalling that the geometric congruence number is the exponent of the kernel of $\delta$ we are done.
\end{proof}
\begin{lemma}
    Let $A/k$ be a new simple modular Abelian variety with odd geometric
    congruence number. Assume that for some Atkin-Lehner involution $w$
    we have $w_*$ is acting trivially on $A$,
    then $\alb(w)(z)=z+P$ for some $P \in A[2](k).$
    \label{twotorsionlemma}
\end{lemma}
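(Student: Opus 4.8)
The plan is to read everything off Proposition \ref{albfunctor}, which relates the covariant and Albanese-induced actions by $\alb(w)(z) = w_*(z) + c(w)$. The first thing to settle is that both actions are actually defined on $A$. Writing $\phi:J_0(N)\to A$ for the quotient and $\pi = \phi\circ i$ for the induced map from $X_0(N)$, Proposition \ref{albfunctor} produces $w_*:A\to A$ and $\alb(w):A\to A$ precisely when $w_*(\ker\phi)\subset\ker\phi$. Since $A$ is new and $w$ is an Atkin--Lehner involution, $w_*$ commutes with the Hecke action on $J_0(N)_{\new}$, and hence preserves the Hecke-stable subvariety $\ker\phi$; this is where the ``new'' hypothesis enters. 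Granting this, the assumption that $w_*$ acts trivially means $w_* = \id_A$, so the displayed relation collapses to $\alb(w)(z) = z + P$ with $P := c(w) \in A(\QQbar)$. It then remains only to pin down $P$.

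Next I would show $2P = 0$. Because $w$ is an involution, $w^2 = \id$ on $X_0(N)$, and the defining commutative square for the Albanese-induced action (the last diagram of Proposition \ref{albfunctor}) gives $\alb(w)\circ\pi = \pi\circ w$. Composing this identity with itself yields $\alb(w)^2\circ\pi = \pi\circ w^2 = \pi$. On the other hand, iterating $\alb(w)(z) = z + P$ shows $\alb(w)^2(z) = z + 2P$. Evaluating both descriptions at a single $k$-rational point in the image of $\pi$ (for instance the image of the cusp $P_1$) forces $2P = 0$, so $P \in A[2]$.

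For rationality I would note that $X_0(N)$, the base point, the involution $w$, and the quotient $\phi$ are all defined over $k$, so the construction of Proposition \ref{albfunctor} yields $\alb(w)$ as a $k$-morphism $A\to A$; concretely $c(w) = \pi\bigl(i_{X_0(N),w(P_1)}(P_1)\bigr)$ is a difference of classes of $k$-rational cusps, hence $k$-rational. Since $w_* = \id$, we have $P = \alb(w)(0)$, and as $0\in A(k)$ and $\alb(w)$ is a $k$-morphism, $P\in A(k)$. Combined with $2P=0$ this gives $P\in A[2](k)$, as claimed. Although it is not needed for the stated conclusion, I would also record that the oddness hypothesis enters through Lemma \ref{lemCE}: were $\alb(w)$ trivial the geometric congruence exponent would be even, so in fact $P\neq 0$, which is the form in which the lemma is applied against the no-two-torsion hypothesis of Theorem \ref{thmCE1}.

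The main obstacle I anticipate is bookkeeping rather than any deep input: on the one hand, confirming rigorously that the Albanese-induced action, and hence the constant $c(w)$, is genuinely $k$-rational (one must track Galois-invariance, which follows from $w$, $\pi$, and the relevant cusps being $k$-rational); on the other, verifying carefully that $w_*$ stabilizes $\ker\phi$ so that the whole machinery of Proposition \ref{albfunctor} applies, which rests on the compatibility of Atkin--Lehner and Hecke operators on the new subvariety noted earlier in the text.
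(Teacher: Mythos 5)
Your proposal is correct and follows essentially the same route as the paper: write $\alb(w)(z)=w_*(z)+P=z+P$, use $\alb(w)^2=\alb(w^2)=\id$ to get $2P=0$, and use the $k$-rationality of $w$ to conclude $P\in A(k)$. The extra care you take in verifying that $w_*$ stabilizes $\ker\phi$ and in locating exactly where the oddness hypothesis enters (only to force $P\neq 0$ via Lemma \ref{lemCE}, which the stated conclusion does not actually require) is a welcome tightening of the paper's terser argument, but not a different proof.
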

\begin{proof}
    By lemma \ref{lemCE} we have that $\alb(w)$ is not trivial. Since
    $\alb(w)(z)=w_*(z)+P$ with $P \in A(\overline{k})$. Since
    $\alb(w^2)(z)=\alb(w)^2(z)=z+2P$ we get that $P$ is a rational
    two torsion point. Also, since $w$ is defined over $k$, we get
    that $\alb(w)$ is also defined over $k$, which implies $P \in A(k)$.
\end{proof}

Given the above lemma, we can now prove theorem \ref{thmCE1}.
\begin{proof}
    Assume that $N$ is not a power of prime. Then the group of 
    Atkin-Lehner involutions on $W$ has more than one generator, say
    $w_1$ and $w_2$ are two distinct generators. Since for any Atkin-Lehner
    involution $w \in W$ we have that $w_*(z)=\pm z$, we can find a 
    non-trivial element in $W$ such that $w_*(z)=z$. Applying lemma
    \ref{twotorsionlemma} we find $0 \neq P \in A[2](k).$
\end{proof}

\begin{remark}
	Note that in the proof of the above theorem, we can get away with slightly 
	weaker assumption than simplicity. 
\end{remark}
\begin{remark}
    Assume that $A/\QQ$ has good reduction at 2, and has odd congruence number but
    has bad reduction at at least two distinct primes. Then we can find
    an Atkin-Lehner involution such that $\alb(w)(z)=z+P$ for $P$ a two torsion
    point.
    Considering 
    $A_{\QQ_2}$ we have that $A$ has good reduction, so we can find a smooth
    model over $A_{\ZZ_2}$. Reducing this model modulo $2$ we get
    $\overline{A}$, and the map 
    \[\overline{\alb(w)}:\overline{A} \rightarrow \overline{A}.\]
    If $P$ vanishes under the reduction mod $2$ map, we get that 
    $\overline{\alb(w)}$ is trivial, which using theorem \ref{thmCE1}
    implies that the geometric congruence number is even.
    Therefore, having odd congruence number implies that $A$ has a 
    two torsion point that does not vanish
    modulo $2$.
\end{remark}

We use the rest of this section to study the action of the Atkin-Lehner involution on
$X_0(N)$ more carefully. Specifically recall the following 
\begin{lemma}
    Let $N$ be any integer, and let $r |N$ such that $(r,N/r)=1$. Then 
    $w_r:X_0(N) \rightarrow X_0(N)$ has a fixed point if and only if
    for every prime $p | r$ we have
    $-p$ is a perfect square modulo $(N/r).$
    \label{AtkinLehnerFixed}
\end{lemma}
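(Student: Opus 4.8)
The plan is to translate the statement into the language of complex multiplication and then read off the answer from local (congruence) conditions, in the spirit of Ogg \cite{Ogg74}. Working over $\CC$ (equivalently $\QQbar$), a point $x=(E,C_N)$ is fixed by $w_r$ precisely when the image pair $(E/C_r,\,(E[r]/C_r)\times(C_{N/r}+C_r)/C_r)$ is isomorphic to $(E,C_N)$. Composing the quotient isogeny $q\colon E\to E/C_r$ (cyclic of degree $r$, with kernel $C_r$) with such an isomorphism $\psi\colon E/C_r\xrightarrow{\sim}E$ produces an endomorphism $\alpha=\psi\circ q\in\End(E)$ of degree $r$ whose kernel is the cyclic group $C_r$ and which carries the prime-to-$r$ structure $C_{N/r}$ into itself. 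If $E$ had no complex multiplication then $\alpha$ would be an integer multiplication, forcing $r$ to be a perfect square and $\ker\alpha$ to be all of $E[\sqrt r]$, which is not cyclic; this contradicts $\ker\alpha=C_r$. Hence $E$ has CM, so $\End(E)=\mathcal O$ is an order in an imaginary quadratic field $K$, and $\alpha$ satisfies $\alpha^2-t\alpha+r=0$ with $t=\trace(\alpha)$. Conversely any CM curve carrying such an $\alpha$ gives a fixed point, so the whole question becomes arithmetic: does some imaginary quadratic order contain an element of norm $r$ with cyclic kernel that stabilizes a cyclic subgroup of order $N/r$?

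Next I would impose the two constraints and localize them. The requirement that the fixed point lie in $\HH$ (that $\alpha$ be imaginary) is the archimedean inequality $t^2<4r$; combined with positivity of the class number of the resulting negative discriminant it guarantees that an actual CM curve exists. The cyclic-kernel condition is a condition at the primes $p\mid r$: the ideal $(\alpha)$ must be primitive, which forces each such $p$ to be non-inert in $K$ and, when ramified, to divide $r$ only to the first power. Finally, since $\deg\alpha=r$ is prime to $N/r$, the norm of $\alpha$ is a unit modulo $N/r$, so $\alpha$ is invertible on $E[N/r]\isom(\ZZ/(N/r))^2$; stabilizing the cyclic group $C_{N/r}$ then means exactly that $\alpha$ has an eigenvector generating it. Reducing modulo each prime power $\ell^{v_\ell(N)}$ dividing $N/r$, this says that the characteristic polynomial $X^2-tX+r$ has a root modulo $\ell^{v_\ell(N)}$, i.e. that $t^2-4r$ is a square there.

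The last step is to run these local conditions over the finitely many admissible traces $t$ and to package them as the stated quadratic-residue criterion. The transparent model is $r=p$ a prime, where the cyclic-kernel condition is automatic, the relevant discriminant may be taken to be $-4p$ (and $-p$ when $p\equiv 3\bmod 4$), and choosing $t=0$ shows the single local condition at $\ell\mid N/r$ reads exactly $-p\equiv\square\pmod{\ell^{v_\ell(N)}}$; by Hensel's lemma the mod-$\ell$ condition upgrades to the mod-$\ell^{v_\ell(N)}$ condition, so existence of a fixed point is equivalent to $-p$ being a square modulo $N/r$. For general $r$ one organizes the computation prime by prime over $p\mid r$ using the same dictionary, the primitivity conditions pinning down the behaviour of $p$ in $K$. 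A clean alternative computational engine for this step is Ogg's: parametrize the fixed points by $\Gamma_0(N)$-conjugacy classes of elliptic integral matrices of determinant $r$ in the Atkin--Lehner coset, which reduces to primitive representations by binary quadratic forms and their local solvability.

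I expect this final bookkeeping to be the main obstacle. One must keep careful track of the factor of $2$ coming from the two admissible discriminants $-4r$ and $-r$, of the ramified primes dividing $r$, and of the prime $2$ when $2\mid N/r$, and must verify that the local solvability data assemble (via the Chinese remainder theorem and Hensel's lemma) into precisely the asserted Legendre conditions. The CM dictionary of the first paragraph reduces the geometry to this congruence problem cleanly; the remaining difficulty is entirely the local number theory, which is the content of the computation in \cite{Ogg74} and which I would either reproduce in the needed generality or cite directly.
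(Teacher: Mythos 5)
First, a point of comparison: the paper does not actually prove Lemma \ref{AtkinLehnerFixed}. It is introduced with ``recall the following'' and no argument is given, so there is no internal proof to measure yours against; the implicit source is the fixed-point analysis of Ogg \cite{Ogg74}. Your CM dictionary is indeed the standard and correct way in: a fixed point of $w_r$ produces $\alpha\in\End(E)$ of degree $r$ with $\ker\alpha=C_r$, and the involution structure does more than you use --- since $\psi$ must carry $E[r]/C_r$ back to $C_r$, one gets $\alpha(E[r])=\ker\alpha$, hence $\ker(\alpha^2)=E[r]$ and (away from $j=0,1728$) $\alpha^2=-r$. So there is no family of traces $t$ to run over: $t=0$ is forced, and the question reduces to whether a $\Gamma_0(N/r)$-structure stable under $\sqrt{-r}$ exists, which is a local condition at each prime $\ell\mid N/r$.

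The genuine gap is exactly at the step you dismiss as bookkeeping. Your local analysis at $\ell\mid N/r$ yields the condition that $X^2+r$ have a root mod $\ell$, i.e.\ that $-r$ be a square modulo $\ell$ (and mod $N/r$ after CRT and Hensel at odd primes). The lemma instead demands that $-p$ be a square mod $N/r$ for \emph{each} prime $p\mid r$. These agree when $r$ is prime --- the only case the paper ever invokes --- but they are not equivalent for composite $r$: for $N=105$, $r=35$, one has $-35\equiv 1\pmod 3$, a square, so the local factor $1+\left(\tfrac{-35}{3}\right)=2$ is nonzero and $w_{35}$ does have fixed points on $X_0(105)$, while $-7\equiv 2\pmod 3$ is not a square, so the stated criterion would deny them. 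Your suggestion that the per-prime form falls out of the primitivity conditions at $p\mid r$ cannot work: every $p\mid r$ ramifies in $\QQ(\sqrt{-r})$, so primitivity of $(\alpha)$ is automatic for squarefree $r$ and imposes nothing resembling a residue condition modulo $N/r$. The honest conclusion of your method is the criterion ``$-r$ is a square modulo every prime dividing $N/r$''; you should either prove that statement, or restrict the lemma to prime $r$ (which suffices for all of its applications in the paper), rather than expecting the asserted per-prime form to emerge from the local computation.
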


This lemma is particularly useful because
\begin{lemma}
    Let $A$ be a modular simple Abelian variety of conductor $N$. Assume
    the Atkin-Lehner involution $w_r:X_0(N) \rightarrow X_0(N)$ has a fixed
    point. Then $(w_r)_*$ acts as $-1$ on $A$. 
    Specifically $(w_N)_*$ acts as $-1$ on $A$.
    \label{AtkinLehnerSign}
\end{lemma}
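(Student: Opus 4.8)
The plan is to first pin down that $(w_r)_*$ can only act as $+1$ or $-1$ on $A$, and then to use the fixed point together with the standing hypothesis of this chapter that $A$ has odd congruence number to eliminate the $+1$ case. For the first step, recall that $A$ has conductor $N$, so it is a new simple quotient of $J_0(N)$; hence $\End(A)\otimes\QQ$ is a field, and the Atkin-Lehner operators commute with the Hecke action on $J_0(N)_{\new}$. Thus $(w_r)_*$ lies in the center of $\End(A)\otimes\QQ$ and, being an involution (since $w_r^2=1$ forces $(w_r)_*^2=\id$), it acts as the scalar $+1$ or $-1$. This is exactly the fact already invoked in the proof of Theorem~\ref{thmCE1}, so it only remains to determine the sign.

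Next I would exploit the fixed point to kill the additive constant that the Albanese formalism attaches to $w_r$. Let $Q$ be a fixed point of $w_r$ and take the canonical Albanese embedding $i=i_{X_0(N),Q}$ based at $Q$, so that $i(Q)=0$ and $\pi=\phi\circ i$ satisfies $\pi(Q)=0$. Here the covariant action really does descend to $A$, because $A$ being new of conductor $N$ makes $\ker(\phi)$ stable under $(w_r)_*$. By Proposition~\ref{albfunctor}, $\alb(w_r)$ acts on $A$ as $(w_r)_*+\pi(c(w_r))$ with $c(w_r)=i_{X_0(N),Q}(w_r(Q))$; since $w_r(Q)=Q$ the constant $c(w_r)=i_{X_0(N),Q}(Q)=0$ vanishes, so $\alb(w_r)=(w_r)_*$ with no translation term. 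Suppose now, toward a contradiction, that $(w_r)_*=+1$. Then $\alb(w_r)$ is the identity, hence trivial, and Lemma~\ref{lemCE} forces the geometric congruence exponent of $A$ to be even, contradicting odd congruence number. Therefore $(w_r)_*=-1$. The final assertion about $w_N$ is then immediate: taking $r=N$ in Lemma~\ref{AtkinLehnerFixed}, the condition that $-p$ be a square modulo $N/r=1$ is vacuous, so $w_N$ always has a fixed point and the conclusion applies.

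The main obstacle, and the reason care is needed, is that the statement is genuinely false without the oddness hypothesis, so any correct proof must use it in an essential way. For the optimal curve of conductor $37$ of rank one, $w_{37}$ does have fixed points on $X_0(37)$ yet $(w_{37})_*$ acts as $+1$; what rescues the general lemma is precisely that such a curve has even modular degree. A tempting shortcut is purely infinitesimal: at a fixed point $Q$ the involution acts as $-1$ on the tangent line $T_Q X_0(N)$, so if $d\pi|_Q\neq 0$ one reads off $(w_r)_*=-1$ on $T_0A$ directly. However $d\pi|_Q$ vanishes exactly when the differential $\omega_{f_A}$ vanishes at $Q$, and this can occur at every fixed point of $w_r$ (indeed it must in the rank-one conductor-$37$ example), so the infinitesimal argument cannot be pushed through unconditionally. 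Thus the real content is the passage through Lemma~\ref{lemCE}: the fixed point guarantees that the translation constant vanishes, converting the formal equality $(w_r)_*=+1$ into the \emph{geometric} statement that $\alb(w_r)$ is trivial, which is incompatible with odd congruence number.
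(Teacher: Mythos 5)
Your argument follows the same route as the paper's: use the fixed point to kill the translation constant in $\alb(w_r)$, reduce to the dichotomy ``either $(w_r)_*=-1$ or $\alb(w_r)$ is the identity,'' and for $w_N$ observe that a fixed point always exists. The one substantive difference is that you actually close the second horn of the dichotomy: you invoke Lemma~\ref{lemCE} together with the odd congruence number hypothesis to rule out $\alb(w_r)=\id$, whereas the paper's proof stops at the dichotomy and declares it ``the desired result,'' and the lemma's statement carries no oddness hypothesis at all. Your conductor-$37$ remark shows this is not pedantry --- $w_{37}$ fixes $i/\sqrt{37}$ yet acts as $+1$ on the rank-one optimal curve of level $37$, whose modular degree is even --- so the hypothesis you import is genuinely needed, and it is the one the paper implicitly assumes throughout this section (note that the corollary immediately following the lemma does reinstate the odd congruence number assumption). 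In short: same mechanism, but your version supplies the missing step and the missing hypothesis; your treatment of $w_N$ via the vacuous condition in Lemma~\ref{AtkinLehnerFixed} is equivalent to the paper's direct citation of the fixed point $\sqrt{-N}$.
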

\begin{proof}
    Let $P \in X_0(N)$ be the fixed point of $w_r$. Then $\pi(P) \in A$
    is fixed under $\alb(w_r)$. However, we know that $\alb(w_r)=(w_r)_*+z$
    for some $z \in A$. Since $\alb(w_r)(\pi(P))=(w_r)_*(\pi(P))+z=\pi(P)$ 
    we get that either $\alb(w_r)$ is the identity, or $(w_r)_*$ is acting
    as $-1$, which is the desired result.

    Finally, the point $\sqrt{-N}$ is fixed by $w_N$, so $(w_N)_*$ is
    acting as $-1$.
\end{proof}

Since $(w_N)_*$ is the sign of the functional equation we get the following
\begin{cor}
    If $A$ is a simple modular Abelian variety with odd congruence number, then
    the analytic rank of $A$ is even.
\end{cor}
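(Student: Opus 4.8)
The plan is to translate the statement "$(w_N)_*$ acts as $-1$ on $A$," which is the content of Lemma \ref{AtkinLehnerSign}, into a statement about the sign of the functional equation of $L(A,s)$, and then invoke the standard fact that the parity of the analytic rank (the order of vanishing of $L(A,s)$ at the center) is governed by that sign. First I would check that the hypothesis is being used where it is needed: an odd congruence number $r_A$ forces the geometric congruence number $n_A$ to be odd as well, by the corollary following Lemma \ref{AlgCongLemm} (any prime dividing $n_A$ divides $r_A$, so $2\nmid r_A$ gives $2 \nmid n_A$). By Lemma \ref{lemCE}, an odd $n_A$ rules out the possibility that $\alb(w_N)$ is the trivial map, so we are in the genuine case of Lemma \ref{AtkinLehnerSign} and obtain $(w_N)_* = -1$ on $A$.

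Next I would identify this sign at the level of modular forms. The cotangent space of $J_0(N)$ at the origin is $S_2(\Gamma_0(N))$, and the covariant action $(w_N)_*$ induces on it the classical Fricke involution $f \mapsto f|w_N$. Restricting to the subspace spanned by $f_A$ and its Galois conjugates, which is the cotangent space of $A$, and using that $A$ is simple (so that an order-two endomorphism acts by a scalar), the Atkin--Lehner eigenvalue $\eta = \pm 1$ of the Fricke involution on the newform $f_A$ equals the scalar by which $(w_N)_*$ acts on $A$. Thus $(w_N)_* = -1$ means $f_A | w_N = -f_A$, i.e.\ $\eta = -1$.

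Then I would write down the functional equation. For a weight-two newform $f$ of level $N$ with trivial character, the completed $L$-function $\Lambda(f,s) = N^{s/2}(2\pi)^{-s}\Gamma(s)L(f,s)$ satisfies $\Lambda(f,s) = -\eta\,\Lambda(f,2-s)$, so the sign of the functional equation is $\epsilon = -\eta$. With $\eta = -1$ this gives $\epsilon = +1$, which forces $\ord_{s=1} L(f,s)$ to be even. Since $L(A,s) = \prod_\sigma L(f_A^\sigma, s)$ runs over the Galois conjugates of $f_A$, all of which carry the same rational Atkin--Lehner eigenvalue $\eta$ and hence the same sign $\epsilon = +1$, each factor vanishes to even order at $s=1$; summing, the analytic rank of $A$ is even.

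The main obstacle is bookkeeping the sign conventions: one must pin down that the covariant action $(w_N)_*$ (as opposed to the dual action on $A^\vee$) induces the Fricke involution on $S_2(\Gamma_0(N))$ with the correct sign, and must correctly carry the factor $i^k = -1$ coming from weight $k=2$ through the functional equation, so that $(w_N)_* = -1$ indeed yields $\epsilon = +1$ rather than $\epsilon = -1$. A secondary point worth verifying is that passing from a single newform to the product over its conjugates preserves the parity conclusion; this is immediate once one notes that the Atkin--Lehner eigenvalue is a Galois-stable $\pm 1$, so every conjugate factor shares the sign $\epsilon = +1$.
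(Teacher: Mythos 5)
Your proposal is correct and follows the same route as the paper, which simply combines Lemma \ref{AtkinLehnerSign} with the remark that the action of $(w_N)_*$ determines the sign of the functional equation. In fact your write-up is more careful than the paper's one-line justification: you correctly identify where the odd-congruence hypothesis is actually needed (to rule out the $\alb(w_N)=\mathrm{id}$ branch of the dichotomy in Lemma \ref{AtkinLehnerSign} via Lemma \ref{lemCE}), and you track the weight-two sign $\epsilon=-\eta$ explicitly, which the paper elides.
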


The following lemma helps us in dealing with even conductors
\begin{lemma}
    Let $A$ be a simple modular Abelian variety with odd congruence number
    and conductor $2M$ with $M$ odd. Then $(w_2)_*$ acts trivially on $A$.
    \label{AtkinLehnerSign2}
\end{lemma}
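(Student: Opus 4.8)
The plan is to leverage the multiplicativity of the Atkin-Lehner operators on coprime exact divisors. Writing $N=2M$ with $(2,M)=1$, we have $w_N=w_2 w_M$, and since $A$ is simple, $\End(A)\otimes\QQ$ is a field in which each involution $(w_r)_*$ is a square root of $1$; thus $(w_2)_*$, $(w_M)_*$, and $(w_N)_*$ each act as $+1$ or $-1$. From $(w_N)_*=(w_2)_*(w_M)_*$ it then suffices to show that both $(w_N)_*$ and $(w_M)_*$ act as $-1$, for then $(w_2)_*=(-1)(-1)=+1$, which is exactly the assertion that $(w_2)_*$ is trivial.

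First I would record that $(w_N)_*$ acts as $-1$; this is the final claim of Lemma \ref{AtkinLehnerSign}, obtained from the fixed point $\sqrt{-N}$ of $w_N$ on $X_0(N)$. Next I would show $w_M$ has a fixed point on $X_0(N)$. By Lemma \ref{AtkinLehnerFixed}, $w_M$ has a fixed point precisely when $-p$ is a square modulo $N/M=2$ for every prime $p\mid M$; since $M$ is odd, each such $p$ is odd, so $-p\equiv 1\pmod 2$, which is a square. Hence $w_M$ automatically has a fixed point in this situation.

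The key step is then to pass from the existence of a fixed point to $(w_M)_*=-1$. Lemma \ref{AtkinLehnerSign} leaves two alternatives: either $(w_M)_*$ acts as $-1$, or $\alb(w_M)$ is the identity. I would note that $\alb(w_M)=\id$ actually forces $(w_M)_*=+1$ (an affine involution with a fixed point and trivial Albanese action must be the identity, since $\alb(w_M)(z)=(w_M)_*(z)+c$ together with a fixed point gives $c=0$), so the genuine dichotomy is ``$(w_M)_*=-1$'' versus ``$\alb(w_M)=\id$''. To exclude the latter, observe that if $\alb(w_M)=\id$ then Lemma \ref{lemCE} forces the geometric congruence exponent of $A$ to be even; but odd congruence number forces odd geometric congruence number (the geometric congruence exponent divides the geometric congruence number $n_A$, and by the corollary $l\mid n_A\Rightarrow l\mid r_A$, so an even $n_A$ would make $r_A$ even), contradicting the hypothesis. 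Therefore $(w_M)_*$ acts as $-1$, and combining the three computations yields $(w_2)_*=+1$.

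The main obstacle is precisely the case analysis concealed in Lemma \ref{AtkinLehnerSign}: a fixed point only guarantees that $\alb(w_M)$ is negation or the identity, and the whole force of the odd congruence number hypothesis is spent ruling out the identity case through Lemma \ref{lemCE} and the transfer from geometric to algebraic congruence numbers. I expect the only other point needing care is confirming that the covariant action of $w_M$ genuinely lifts to $A$ so that Lemma \ref{lemCE} applies, which follows from $A$ being a modular quotient on which the Atkin-Lehner operators already act.
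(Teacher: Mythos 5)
Your proof follows the same route as the paper's: $(w_N)_*=-1$ from the fixed point $\sqrt{-N}$, then $(w_M)_*=-1$ from Lemma \ref{AtkinLehnerFixed} (the condition that $-p$ be a square modulo $N/M=2$ is vacuous for odd $p$) together with Lemma \ref{AtkinLehnerSign}, and finally $(w_2)_*=(w_N)_*(w_M)_*=+1$ by multiplicativity. The only difference is that you explicitly close the dichotomy left open in the proof of Lemma \ref{AtkinLehnerSign} (negation versus trivial Albanese action) by invoking Lemma \ref{lemCE} and the odd congruence number hypothesis, a point the paper passes over silently; this is a welcome refinement but not a different argument.
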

\begin{proof}
    We already know that $(w_{2M})_*$ will act as $-1$ on $A$, and by 
    lemma \ref{AtkinLehnerFixed} we have that $(w_M)_*$ will act as $-1$
    as well. Therefore $(w_2)_*$ must act trivially.
\end{proof}

\section{Algebraic Congruence Number}
\label{sec23}
torsion point, and an odd congruence number. By studying the cuspidal 
subgroup of $J_0(N)$, we will show that the conductor of such Abelian
varieties when $N$ is square free is the product of at most $2$ primes. 
We will then show that when $N=pq$, then $p$
and $q$ need to satisfy certain congruences.
Throughout this section, we assume that $N$ is square free.

For this section, let $N$ be the conductor of $A$, and let $\TT=\TT^\new$ be the Hecke algebra 
acting on $J_0(N)^{\new}$, 
and $S_2(\Gamma_0(N))_{\new}$. Let $\mm$ be the maximal ideal in $\TT$ generated by
$2$, $T_p-1$ for all $p|N$, and $T_l-(l+1)$ for all $l \ndiv N$.
Also let $B = \ker(\phi),$ where $\phi: J_0(N) \rightarrow A$ is an optimal quotient.

Applying the lemma \ref{AlgCongLemm}
to the maximal ideal $\mm$ we get that if $A^\dual[\mm] \neq \left\{ 0 \right\}$ and 
$B[\mm] \neq \left\{ 0 \right\}$, then the algebraic congruence number of $A$ is even.
If we show that $A^\dual[\mm] \cap B[\mm] \neq \left\{ 0 \right\}$ then we get that
the geometric congruence number of $A$ is even.

The results of this section rely on the following lemmas.
\begin{lemma}
	Let $A$ be a modular Abelian variety with a two torsion point $P$. Then
	$P \in A[\mm]$. Specifically $A^\dual[\mm] \neq 0$.
	\label{LemRib2}
\end{lemma}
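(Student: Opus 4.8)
The plan is to show directly that every generator of $\mm$ annihilates $P$, so that $P\in A[\mm]=\bigcap_{t\in\mm}\ker(t)$. The generator $2$ is immediate, since $P$ is a two torsion point. The remaining generators come in two families, $T_p-1$ for $p\mid N$ and $T_l-(l+1)$ for $l\ndiv N$, and I would handle them by separating the odd primes of good reduction, the bad primes, and the prime $2$ itself. Throughout I use that the two torsion point at hand is rational, which is exactly the output of Lemma \ref{twotorsionlemma} in the situation where this lemma is applied.

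For an odd prime $l\ndiv N$ the variety $A$ has good reduction at $l$, and since $l$ is odd, reduction modulo $l$ embeds $A[2]$ into $\overline{A}(\overline{\FF}_l)[2]$. By the Eichler--Shimura relation $T_l=\frob_l+l/\frob_l$, which descends from $J_0(N)$ to the quotient $A$ over $\FF_l$, and since $[l]$ acts as the identity on two torsion, we get $T_l=\frob_l+\frob_l^{-1}$ on $A[2]$. As $P$ is rational its reduction is fixed by $\frob_l$, so $T_lP=2P=0$, while $(l+1)P=0$ because $l+1$ is even; hence $(T_l-(l+1))P=0$, and the embedding lets us read this equality off inside $A[2]$. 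For the bad primes $p\mid N$ (note $N$ is squarefree, so $p\|N$) the operator $T_p=U_p$ agrees up to sign with the Atkin--Lehner involution $w_p$ on the $p$-new part, so $U_p^2=1$ as an endomorphism of $A$. Because $A$ is simple, $\End(A)\otimes\QQ$ is a field, forcing $U_p=\pm1$ globally; acting on $A[2]$ this is $\pm P=P$ (as $-P=P$ on two torsion), so $(T_p-1)P=0$.

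The genuine obstacle is the last generator, $T_2-3$, which appears when $2\ndiv N$: here the residue characteristic equals the order of $P$, the reduction argument breaks down, and one cannot embed $A[2]$ into a reduction. I would resolve this through the mod $2$ Galois representation attached to the Hecke eigensystem of $P$. Its determinant is the mod $2$ cyclotomic character, which is trivial, and the Galois fixed line supplied by the rational point $P$ forces the representation to be reducible with both Jordan--H\"older constituents trivial. By Brauer--Nesbitt and Chebotarev this semisimple representation is determined by its Frobenius traces at the odd primes, where we have already matched it with the weight two Eisenstein system ($\trace\frob_l\equiv0\equiv1+l$); strong multiplicity one then identifies the eigensystem as the Eisenstein one, for which $T_2\equiv1+2\equiv1\pmod{\mm}$, giving $(T_2-3)P=0$. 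The delicate point here, and the part of the argument I expect to require the most care, is to check that $P$ is killed by $\mm$ itself rather than by some power $\mm^k$, i.e. that $T_2-1$ acts as zero and not merely nilpotently on the relevant piece; this is exactly where one must invoke the local structure of $A[2]$ as a Hecke module at the Eisenstein prime.

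Combining the four cases yields $0\neq P\in A[\mm]$, so in particular $A[\mm]\neq0$. Taking $I$ to be the image of $\mm$ in the commutative ring $\End(A)$ (which is not the unit ideal, since $A[\mm]\neq0$), the corollary stating that for a simple $A$ with commutative endomorphism ring $A[I]\neq0$ implies $A^\dual[I]\neq0$ then gives $A^\dual[\mm]\neq0$, as claimed.
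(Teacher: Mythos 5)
Your treatment of the generators $2$, $T_p-1$ for $p\mid N$, and $T_l-(l+1)$ for odd $l\ndiv N$ is essentially the paper's own argument: $2P=0$ by hypothesis; $T_pP=-w_pP=\pm P=P$ on two torsion because $w_p$ acts as $\pm 1$ on the simple new quotient; and for odd $l$ of good reduction the Eichler--Shimura relation, the injectivity of reduction on prime-to-$l$ torsion, and the rationality of $P$ give $T_lP=(1+l)P$. The one place you diverge is the generator $T_2-3$ when $2\ndiv N$, and there your argument is genuinely incomplete, as you yourself concede: the representation-theoretic route (Brauer--Nesbitt, Chebotarev, multiplicity one) only identifies the eigensystem carried by the Hecke module generated by $P$ as Eisenstein modulo $\mm$, i.e.\ it shows $P$ is killed by some power of $\mm$, which is strictly weaker than the exact identity $(T_2-3)P=0$. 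Since the entire point of the lemma is the precise statement $P\in A[\mm]$ (so that Lemma \ref{AlgCongLemm} can be applied to the maximal ideal $\mm$ itself), you cannot leave the passage from ``killed by $\mm^k$'' to ``killed by $\mm$'' as a delicate point to be checked later; without it the proof does not establish the conclusion when $2\ndiv N$.

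That said, you have put your finger on a real subtlety that the paper's own proof glosses over. The paper applies Eichler--Shimura uniformly at every $l\ndiv N$, including $l=2$; but at $l=2$ the reduction map $A[2](\QQbar)\to \overline{A}(\overline{\FF}_2)$ need not be injective (its kernel is the formal-group part of $A[2]$), so from $\overline{T_2P}=\overline{P}$ one may only conclude that $T_2P-P$ is a rational two torsion point lying in the kernel of reduction modulo $2$, not that it vanishes. Closing this requires an additional input on the local structure of $A[2]$ at the prime $2$ (in the spirit of Mazur's analysis at the Eisenstein prime), or else restricting to the case $2\mid N$, where $T_2-1$ rather than $T_2-3$ is the relevant generator and the paper's separate cuspidal argument applies. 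In short: your three easy cases are correct and identical to the paper's; your fourth case is a gap in your writeup, and it exposes the same unaddressed point in the paper's proof.
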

\begin{proof}
	Clearly $P$ is killed by $2$, and $T_p P=-w_p P = \pm P \equiv P \pmod 2$.
	Therefore the only question is $T_l P \equiv (l+1)P$. This follows from Eichler-Shimura
	relationship $T_l(P)=(\frob_l+l/\frob_l)(P)$. Since $P$ is rational, $\frob_l$ is acting
	trivially, and the result follows.
\end{proof}

\begin{lemma}
    Let $C \subset J_0(N)$ be the cuspidal subgroup of $J_0(N)$. Then $C[2]^{2-\new}$
    is killed by $\mm$.
	\label{CuspidalLemma1}
\end{lemma}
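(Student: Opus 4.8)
The plan is to verify that each generator of $\mm$ — namely $2$, the operators $T_p-1$ for $p\mid N$, and the operators $T_l-(l+1)$ for $l\nmid N$ — annihilates the image of $C[2]$ in $J_0(N)^{2\text{-new}}=J_0(N)/J_0(N)_{2\text{-old}}$. The generator $2$ is immediate, since $C[2]$ is $2$-torsion. Because $C$ is generated by the cusp classes $P_r-P_1$ (Definition \ref{CuspidalDefn}) and is stable under the Hecke algebra, everything else reduces to computing how $T_n$ acts on these cusp classes and reading off the answer modulo $2$. Throughout I use that, for $N$ squarefree, every cusp $P_r$ is defined over $\QQ$, and that the isomorphism \ref{CuspIsom2} together with Proposition \ref{ligozatprop} lets me compute inside the cuspidal group explicitly.

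For a prime $l\nmid N$ I claim that $T_l$ acts on $C$ as multiplication by $l+1$; granting this, $T_l-(l+1)$ kills $C$, hence its image in $J_0(N)^{2\text{-new}}$. This is the Eisenstein property of the cuspidal group: the classes $P_r-P_1$ lie in the kernel of $T_l-(l+1)$. One sees it either from the explicit action of the Hecke correspondence $\alpha_{Nl,N},\beta_{Nl,N}$ on cusps, or, since the $P_r$ are rational, from the Eichler--Shimura relation $T_l=\frob_l+l/\frob_l$ on $J_0(N)_{\FF_l}$: on $\QQ$-rational points $\frob_l$ is trivial, giving eigenvalue $1+l$. In particular $(T_l-(l+1))C[2]=0$, so these generators are harmless.

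The essential point is the behaviour of $U_p=T_p$ for $p\mid N$, and this is where the passage to the $2$-new quotient enters. Computing $U_p$ on cusps via Tate curves — equivalently, through its relation to the Atkin--Lehner operator $w_p$ and the sign-eigenvectors of Proposition \ref{GeneralizedOgg} — the eigenvalues of $U_p$ on $C$ lie in $\{1,p\}$. For odd $p$ both are $\equiv 1\pmod 2$, but since $(U_p-1)(U_p-p)$ reduces to $(U_p-1)^2$ modulo $2$, the eigenvalue relation alone shows only that $U_p$ is \emph{unipotent} on $C[2]$; I must show it is in fact the identity there, the nilpotent off-diagonal part living in the $2$-old direction and dying in the $2$-new quotient. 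For $p=2$ the eigenvalue $p=2\equiv 0\pmod 2$ genuinely occurs on the $2$-old part, so one cannot work with all of $C[2]$; on the $2$-new quotient, however, $U_2=-w_2$ is an automorphism with $U_2^2=1$, hence $U_2\equiv 1\pmod 2$ and $T_2-1$ acts as $0$. The main obstacle is precisely this reconciliation: proving that modulo $2$ the operator $U_p$ acts as the genuine identity — not merely unipotently — on $C[2]^{2\text{-new}}$, for which the explicit permutation of cusps induced by $U_p$, rather than the characteristic-polynomial relation, appears unavoidable.
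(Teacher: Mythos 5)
Your treatment of the generators $2$ and $T_l-(l+1)$ for $l\nmid N$ matches the paper's (the paper simply asserts the Eisenstein property $T_lc=(l+1)c$ on cusps, which you justify via Eichler--Shimura on rational cusps), and your handling of $T_2-1$ --- using that $\beta_*$ dies on the $2$-new part so that $U_2=-w_2$ there, an involution, hence $\equiv 1\pmod 2$ --- is essentially the paper's argument via Ribet's formula $T_p+w_p=(\alpha_{N,N/p})^*\circ(\beta_{N,N/p})_*$. But for the odd primes $p\mid N$, which is the bulk of the lemma, your proposal has a genuine gap that you yourself flag: knowing that $U_p$ satisfies $(U_p-1)(U_p-p)=0$ on $C$ only gives $(U_p-1)^2=0$ on $C[2]$, i.e.\ unipotence, whereas the lemma needs $U_p-1$ to vanish identically there. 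You state that the explicit action of $U_p$ on the cusps ``appears unavoidable'' and then do not carry it out, so the proof is incomplete exactly where the real content lies.

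The paper closes this gap by a short direct computation on the generators $P_r-P_1$ of $C$: writing $r=p^is$ with $p\nmid s$ and using $\alpha_{N,N/p}=\beta_{N,N/p}\circ w_p$, one finds
\[
(T_p+w_p)(P_r-P_1)=\alpha^*\beta_*(P_r-P_1)=\alpha^*(P_s-P_1)=P_s+pP_{ps}-P_1-pP_p\equiv(1+w_p)(P_r-P_1)\pmod 2,
\]
whence $T_p(P_r-P_1)\equiv P_r-P_1\pmod 2$ on the nose. Note that for odd $p$ this needs no passage to the $2$-new quotient at all; the quotient is only required for $p=2$, where the eigenvalue $p\equiv 0$ genuinely occurs on the $2$-old part, exactly as you observe. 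If you insert this computation --- or any equivalent explicit description of $U_p$ on the cuspidal divisors --- your argument becomes the paper's.
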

\begin{proof}
    Clearly $C[2]$ is killed by $2$. Furthermore for any prime $l \ndiv N$ we have
    that $T_l c=(l+1)c$ for any cusp $c \in C$, so the only thing we need to check
    if $T_p c \equiv c \pmod \mm$ for $c \in C[2]^{2-\new}.$
    To do this, we use the formula of Ribet
    \[T_p+w_p=(\alpha_{N,N/p})^*\circ (\beta_{N,N/p})_*\] (see \cite{Rib91}). If $N$ is even and
    $c \in C[2]^{2-\new}$ then $(\beta_{N,N/2})_*(c)=0$ by definition. Therefore in this
    case $T_2(c)=-w_2(c)=\pm c$ which is the desired result.
    For general odd prime $p | N$, let $r |N$ such that $(r,N/r)=1$. Note that 
    $\alpha_{N,N/p}=\beta_{N,N/p}w_p$. Let $r=p^i s$ were $p \ndiv s.$ Then we have
    \begin{eqnarray*}
        (T_p+w_p)(P_r-P_1) &=& \alpha^*\beta_*(P_r-P_1) \\
        &=& \alpha^* \alpha_*(P_{p^{1-i}s}-P_p) \\
        &=& \alpha^* (P_s-P_1) \\
        &=& P_s+pP_{ps}-P_1 - pP_p \\
        &\equiv & P_s+P_{ps}-P_1-P_p \pmod 2 \\
        &\equiv & (1+w_p)(P_r-P_1) \pmod 2.
    \end{eqnarray*}
    Therefore $T_p(P_r-P_1) \equiv P_r-P_1 \pmod 2.$ Furthermore since the cuspidal
    subgroup is generated by elements $P_r-P_1$ we have that $C[2]^{2-\new}$ is killed
    by $T_p-1$.
\end{proof}

The following proposition gives us a method to show modular Abelian varieties
have even congruence numbers.
\begin{prop}
    Let $\pi:J_0(N) \rightarrow A$ be a new modular Abelian variety with odd congruence
    number. Assume that $A^\dual[\mm] \neq 0$. Let $B=\ker(\pi)$ be the orthogonal Abelian
    variety, and let $C \subset J_0(N)$ be the cuspidal subgroup of $J_0(N)$. Then
    $B \cap C[2]^{2-\new} = 0$.
    \label{CuspidalProp2}
\end{prop}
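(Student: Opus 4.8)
The plan is to argue by contradiction, so suppose $B \cap C[2]^{2-\new}$ contains a nonzero point $x$. The first step is to relocate $x$ inside $B[\mm]$. By Lemma \ref{CuspidalLemma1} the group $C[2]^{2-\new}$ is killed by $\mm$, so $x$ is annihilated by $\mm$; and since $\phi$ is Hecke-equivariant, $B=\ker(\phi)$ is $\TT$-stable, so $x\in B$ together with $\mm x = 0$ exhibits $x$ as a nonzero element of $B[\mm]$. Hence $B[\mm]\neq 0$.

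The second step combines this with the standing hypothesis $A^\dual[\mm]\neq 0$ and feeds both nonvanishing statements into Lemma \ref{AlgCongLemm}, whose conclusion is that $\#\TT/\mm$ divides the congruence number $r_A$. The final step extracts a parity contradiction: because $2\in\mm$ by the very definition of $\mm$, the residue ring $\TT/\mm$ is a finite field of characteristic $2$, so $\#\TT/\mm$ is a positive power of $2$ and in particular even. Therefore $r_A$ is even, contradicting the hypothesis that $A$ has odd congruence number. This forces $x=0$, i.e. $B\cap C[2]^{2-\new}=0$.

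I expect the argument to be essentially immediate once the earlier lemmas are in hand; the whole content is the chaining of Lemma \ref{CuspidalLemma1} (to push $x$ into the $\mm$-torsion) with Lemma \ref{AlgCongLemm} (to convert the simultaneous nonvanishing of $A^\dual[\mm]$ and $B[\mm]$ into divisibility of $r_A$ by $\#\TT/\mm$). The only point that genuinely needs care is the first step: one must verify that lying in the $\mm$-annihilated cuspidal piece $C[2]^{2-\new}$ really places $x$ in $B[\mm]$, and not merely in the $\mm$-torsion of the ambient $J_0(N)$. This is exactly where the Hecke-stability of $B=\ker(\phi)$ is used, and it is the single hypothesis to flag; beyond it there is no analytic or geometric obstacle.
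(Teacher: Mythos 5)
Your argument is correct and coincides with the paper's own proof: both push the hypothetical nonzero point of $B \cap C[2]^{2-\new}$ into $B[\mm]$ via Lemma \ref{CuspidalLemma1}, then invoke Lemma \ref{AlgCongLemm} together with $A^\dual[\mm]\neq 0$ and $\TT/\mm=\FF_2$ to force the congruence number to be even. Your extra remark about Hecke-stability of $B=\ker(\phi)$ is a fair point of care that the paper leaves implicit, but it changes nothing in the structure of the argument.
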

\begin{proof}
    Assume the contrary, and let $P \in B \cap C[2]^{2-\new}$ be such a point. By lemma
    \ref{CuspidalLemma1} and the fact that 
    $P \in C[2]^{2-\new}$, we have that $P$ is killed by
    $\mm$. Therefore $P \in B[\mm]$, which means $B[\mm] \neq 0.$ Now by lemma \ref{AlgCongLemm}
    we get that the characteristic of $\TT/\mm$ divides the congruence number of $A$.
    Since $\TT/\mm=\FF_2$, we get that $A$ will have even congruence number, which is 
    contrary to our assumption. Therefore $B \cap C[2]^{2-\new}=0$.
\end{proof}

We now prove the main result of this chapter.
\begin{thm}
    Let $N$ be a square free integer, and let $\pi:J_0(N) \rightarrow A$ be a new simple modular Abelian
    variety with odd congruence number. Then $N$ is either a prime number or a product of
    two prime numbers.
    \label{CongNumberPQ}
\end{thm}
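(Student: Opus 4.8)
The plan is to argue by contradiction: suppose $N=p_1\cdots p_t$ is square free with $t\geq 3$, and derive that $B\cap C[2]^{2-\new}\neq 0$, where $B=\ker(\pi)$ and $C\subset J_0(N)$ is the cuspidal subgroup. This will contradict Proposition \ref{CuspidalProp2}. To put ourselves in a position to apply that proposition I first check its hypothesis $A^\dual[\mm]\neq 0$. The corollaries to Lemma \ref{AlgCongLemm} show that an odd (algebraic) congruence number forces the geometric congruence number to be odd as well; since $t\geq 3$ means $N$ is not a prime power, Theorem \ref{thmCE1} then produces a nonzero two-torsion point on $A$. By Lemma \ref{LemRib2} this point lies in $A[\mm]$ and yields $A^\dual[\mm]\neq 0$. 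Proposition \ref{CuspidalProp2} now gives $B\cap C[2]^{2-\new}=0$, i.e.\ the projection $\pi$ is injective on $C[2]^{2-\new}$.

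Next I would bound the image of this injection. For square free $N$ all the cusps $P_d$ ($d\mid N$) are rational, so $C\subset J_0(N)(\QQ)$ and hence $\pi\brk{C[2]^{2-\new}}\subset A(\QQ)[\mm]=A[\mm]^{\Gal(\QQbar/\QQ)}$. The maximal ideal $\mm$ is Eisenstein: $\TT/\mm=\FF_2$ and $T_l\equiv l+1 \pmod\mm$, so the semisimplification of the mod-$2$ representation attached to $\mm$ is trivial of dimension two, and multiplicity one for $J_0(N)$ at $\mm$ bounds $\dim_{\FF_2}A[\mm]\leq 2$. By injectivity we conclude $\dim_{\FF_2}C[2]^{2-\new}\leq 2$.

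Everything therefore reduces to a statement about the cuspidal group alone: if $t\geq 3$ then $\dim_{\FF_2}C[2]^{2-\new}\geq 3$. This is the step I expect to be the main obstacle. The $2$-torsion is manufactured from Proposition \ref{GeneralizedOgg}: for each sign vector $\mathbf b=(b_1,\dots,b_t)\neq(1,\dots,1)$ the cusp $z_{\mathbf b}=\sum_{d\mid N}\brk{\prod_{p_k\mid d}b_k}P_d$ has order $\num\brk{\prod_i(p_i+b_i)/24}$, and because each odd $p_i$ makes both $p_i-1$ and $p_i+1$ even, the $2$-adic valuation of $\prod_i(p_i+b_i)$ exceeds that of $24$ for all but a few $\mathbf b$ once $t\geq 3$; so even orders are abundant. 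The difficulty is that these $z_{\mathbf b}$ cannot simply be counted: the Atkin--Lehner eigenspace decomposition of $C$ is integral only away from $2$, so the even-order $z_{\mathbf b}$ need be neither $\FF_2$-independent nor distinct in $C[2]$.

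To make the lower bound rigorous I would work directly from the lattice criterion of Lemma \ref{cusplemma2}. A degree-zero integral divisor $w$ represents a class in $C$ killed by $2$ precisely when $2w$ satisfies conditions (1) and (3) of that lemma, i.e.\ when $\Lambda^{-1}w$ is half-integral with integral $i$-th partial sums for every $i$; I would exhibit three $\FF_2$-independent solutions $w$ of this system, which then give the desired $\dim_{\FF_2}C[2]\geq 3$ for odd $N$. When $N$ is even one must in addition impose the $2$-new condition (that $w$ be annihilated by the degeneracy maps down to level $N/2$, consistent with the triviality of $(w_2)_*$ from Lemma \ref{AtkinLehnerSign2}) and re-examine the count with the prime $2$ playing a special role. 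Verifying independence and $2$-newness of these three classes is the crux of the argument; granting it, $\dim_{\FF_2}C[2]^{2-\new}\geq 3$ contradicts the bound of the second paragraph and forces $t\leq 2$.
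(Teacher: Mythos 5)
Your reduction to Proposition \ref{CuspidalProp2} is exactly right and matches the paper: odd congruence number plus $t\geq 3$ gives a rational two-torsion point (Theorem \ref{thmCE1} via the corollary to Lemma \ref{AlgCongLemm}), hence $A^\dual[\mm]\neq 0$ by Lemma \ref{LemRib2}, and everything hinges on producing a nonzero element of $B\cap C[2]^{2-\new}$. But from there you pivot to a dimension count that rests on two unestablished pillars. First, the upper bound $\dim_{\FF_2}A[\mm]\leq 2$: you invoke ``multiplicity one for $J_0(N)$ at $\mm$,'' but $\mm$ is an Eisenstein-type maximal ideal of residue characteristic $2$, which is precisely the setting where multiplicity one is not available (and is known to fail in examples); moreover $A$ is an arbitrary new simple quotient, possibly of large dimension, and $A[\mm]$ is not a subgroup of $J_0(N)[\mm]$ in any case, so even a multiplicity-one statement for $J_0(N)$ would not directly bound $A[\mm]$. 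Second, the lower bound $\dim_{\FF_2}C[2]^{2-\new}\geq 3$ for $t\geq 3$ is exactly the computation you defer (``the crux''), and the obstruction you yourself identify --- that the even-order classes $z_{\mathbf b}$ need not be independent or even distinct in $C[2]$ --- is real. As written, neither inequality is proved, so the contradiction never materializes.

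The idea you are missing is that no counting is needed: one explicit element of $B\cap C[2]^{2-\new}$ suffices, and membership in $B$ can be forced by the Atkin--Lehner action rather than detected by dimension. Since $(w_N)_*=-1$ on $A$ (Lemma \ref{AtkinLehnerSign}) and $N$ has at least three prime factors, one can choose distinct primes $p,q,r\mid N$ with $(w_p)_*=-1$ and $(w_q)_*=(w_r)_*$, with $2\nmid pq$ by Lemma \ref{AtkinLehnerSign2}. Set $z=(1-w_{qr})(1+s_pw_p)(1+s_qw_q)P_1$. Because $\alb(w_{qr})$ differs from the identity by translation by a two-torsion point $a$, and $z=P-w_{qr}(P)$ where $P$ is a sum of \emph{four} cusps, one gets $\pi(z)=4a=0$, so $z\in B$ unconditionally --- this is the step your argument has no substitute for. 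Proposition \ref{GeneralizedOgg} computes the order $m$ of $z$, and the choice $s_p\equiv -p$, $s_q\equiv -q\pmod 4$ makes $m$ even; then $(m/2)z$ is a nonzero element of $B\cap C[2]^{2-\new}$ (with a short separate check of $2$-newness when $r=2$), giving the contradiction directly.
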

\begin{proof}
    Assume that $N$ has more than two prime divisors
    and that $A$ has an odd congruence number. 
    By lemma \ref{LemRib2} we have that $A^\dual[\mm] \neq 0$,
    and therefore by proposition \ref{CuspidalProp2} we only need to create a nontrivial
    element in $B \cap C[2]^{2-\new}.$ Alternatively, we only need to create a nontrivial
    element in $C[2]^{2-\new}$ which vanishes under $\pi$. 
 
    By lemma \ref{AtkinLehnerSign} we have that $(w_N)_*$ acts as $-1$. 
    Since $w_N=\prod_{l|N}w_l$ there are odd number of primes such that $(w_l)_*$
    acts as $-1$ on $A$. Therefore we can pick distinct prime divisors
    of $N$, $p$, $q$, and $r$ such that $(w_p)_*$ acts as $-1$, while
    $(w_r)_*=(w_q)_*$. 
    Furthermore, since we know that when $N$ is even then $(w_2)_*$ acts as
    $+1$, assume that $2 \ndiv pq$.

    We now use proposition \ref{GeneralizedOgg} to construct a point of
    even order. Specifically let $s_p$, $s_q =\pm 1$. Then let the order of
    \[ z=(1-w_{qr})(1+s_p w_p)(1+s_q w_q) P_1 = (1+s_p w_p)(1+s_q w_q)(1-s_q w_r)P_1 \]
    be $m$. We have that $m$ is
    divisible by
    $\num\left( {(1+s_p p)(1+s_q q)(1-s_q r) \over 24} \right)$.
    If we choose $s_p \equiv -p \pmod 4$ and $s_q \equiv -q \pmod 4$ then
    this order is even. Therefore we have $w={m \over 2}z$ lives in
    $C[2]$. Note that if $N$ is even and $r=2$, then 
    $(\alpha_2)_*(z)=(1-s_q)(1+s_pw_p)(1+s_qw_q)P_1$. If $s_q=1$ then
    $(\alpha_2)_*(z)=0$ which means $z \in C[2]^{2-\new}$, while if 
    $s_q=-1$ then $z$ will have order $m/2$ or $m/6$. In either case,
    $(\alpha_2)_*(w)=0$ which means $w \in C[2]^{2-\new}$ 
    always.

    We now show that $\pi(z) =0$. Note that $\alb(w_{qr})(h)=a+h$ for some
    two torsion point $a$. Therefore 
    \begin{eqnarray*}
        \pi(\tau-w_{qr}(\tau))=\pi(\tau)-\alb(w_{qr})(\pi(\tau)) =a,
    \end{eqnarray*}
    for any $\tau$ in $X_0(N)$.
    Let $P=(1+s_pw_p)(1+s_qw_q)P_1=P_1\pm P_p \pm P_q \pm P_{pq}$.
    Then 
    \begin{eqnarray*}
        \pi(P-w_{qr}(P))=4a =0,
    \end{eqnarray*}
    which proves that $z \in \ker(\pi)=B.$ Therefore $(m/2) z \in B \cap C[2]^{2-\new}$
    which is the desired result.
\end{proof}

Combining this theorem with section \ref{CMcase} we get that if $A$ is an absolutely simple
modular Abelian variety with odd congruence number then the conductor of $A$ belongs to
$\left\{ 2^\alpha,p^\alpha, 2p, 4M,pq \right\}$ for
some odd prime $p$ and $q$ and positive integers $\alpha$ and $M$.
We expect that the same result is true for odd geometric congruence number. 
In general 
we conjecture the slightly stronger result
\begin{conjecture}
    If $A$ is a simple modular Abelian variety with odd geometric congruence 
    number, then the conductor of $A$ is either a power of prime,
    product of two primes, or $2^a p^b$ for $a=2$ or $3$, and $b \geq 1$.
\end{conjecture}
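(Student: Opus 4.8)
The plan is to argue by contradiction. Suppose $N$ has at least three distinct prime divisors yet $A$ has odd congruence number. Because $N$ is not a prime power, the Atkin--Lehner analysis of the previous section produces a nontrivial involution $w$ with $w_*$ acting trivially on $A$, and hence, by Lemma \ref{twotorsionlemma}, a rational two-torsion point; Lemma \ref{LemRib2} then gives $A^\dual[\mm]\neq 0$. Under this hypothesis Proposition \ref{CuspidalProp2} asserts $B\cap C[2]^{2-\new}=0$, where $B=\ker(\pi)$ and $C$ is the cuspidal subgroup of $J_0(N)$. So the entire theorem reduces to exhibiting a \emph{single} nonzero element of $B\cap C[2]^{2-\new}$: once found, Lemma \ref{CuspidalLemma1} shows it lies in $B[\mm]$, and then Lemma \ref{AlgCongLemm} forces the characteristic of $\TT/\mm=\FF_2$ to divide the congruence number, contradicting oddness.

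To locate the primes to work with, I would first record that $(w_N)_*$ acts as $-1$ on $A$ by Lemma \ref{AtkinLehnerSign} (the point $\sqrt{-N}$ being fixed by $w_N$). Since $w_N=\prod_{l\mid N}w_l$ and each $(w_l)_*=\pm 1$ by simplicity, an odd number of the primes $l\mid N$ satisfy $(w_l)_*=-1$; and when $N$ is even Lemma \ref{AtkinLehnerSign2} shows $2$ is not among them. As $N$ has at least three prime factors I can therefore choose three distinct primes $p,q,r\mid N$, with $2\nmid pq$, such that $(w_p)_*=-1$ while $(w_q)_*=(w_r)_*$; in particular $(w_{qr})_*$ acts trivially on $A$.

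Next I would build the point from cusps. For signs $s_p,s_q=\pm 1$ set
\[ z=(1-w_{qr})(1+s_p w_p)(1+s_q w_q)P_1, \]
a degree-zero cuspidal divisor. By Proposition \ref{GeneralizedOgg} its order $m$ is divisible by $\num\bigl((1+s_p p)(1+s_q q)(1-s_q r)/24\bigr)$, and choosing $s_p\equiv -p$, $s_q\equiv -q \pmod 4$ makes this numerator even, so $w=(m/2)z$ is a nonzero $2$-torsion cusp. A direct check that $(\alpha_2)_*$ annihilates $w$ (trivial when $r\neq 2$, and handled via the $r=2$ case when $N$ is even) puts $w$ in the $2$-new part $C[2]^{2-\new}$. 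Finally, since $(w_{qr})_*$ acts trivially, $\alb(w_{qr})$ is translation by the two-torsion point $a$ of Lemma \ref{twotorsionlemma}; writing $P=(1+s_pw_p)(1+s_qw_q)P_1$, one finds $\pi(z)=\pi(P-w_{qr}P)=\pm(1+s_p)(1+s_q)\,a$ with $(1+s_p)(1+s_q)\in\{0,4\}$, which vanishes because $a$ is $2$-torsion. Hence $z\in B$ and $w\in B\cap C[2]^{2-\new}$ is the contradiction sought.

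The hard part is the simultaneous control of the three competing demands on the constructed cusp: even order (which dictates the sign congruences mod $4$ through Proposition \ref{GeneralizedOgg}), membership in the $2$-new part (which is why the annihilating factor $1-w_{qr}$ is inserted and why $(\alpha_2)_*$ must be tracked), and vanishing under $\pi$ (which needs the coefficient of $a$ to be a multiple of four). These pull the sign choices in different directions, and the prime $2$ must be excluded from $\{p,q\}$ throughout; reconciling all of this is the crux, and it is precisely the square free hypothesis that keeps the cusp-order bookkeeping of Proposition \ref{GeneralizedOgg} clean.
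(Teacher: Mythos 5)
The statement you are proving is stated in the paper as a \emph{conjecture}, and the paper itself does not claim a complete proof: it only observes that the conjecture follows when $4 \nmid N$ and $A$ is \emph{absolutely} simple, by combining Theorem \ref{InnerTwist} with Theorem \ref{CongNumberPQ}. Your argument is, almost verbatim, the paper's proof of Theorem \ref{CongNumberPQ} (choice of $p,q,r$ via the sign of $(w_N)_*$, the cusp $z=(1-w_{qr})(1+s_pw_p)(1+s_qw_q)P_1$, the sign choices $s_p\equiv -p$, $s_q\equiv -q\pmod 4$, and the vanishing $\pi(z)=4a=0$). That theorem, however, is proved under the standing hypothesis that $N$ is \emph{square free}, and your proof inherits that restriction in an essential way: Proposition \ref{GeneralizedOgg} and Lemma \ref{cusplemma2}, which control the order of your cuspidal divisor, are only established for square-free $N$, and the cusps $P_r$ are only defined and indexed by $r\mid N$ in that setting. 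The conjecture, by contrast, is about arbitrary conductors; it excludes levels such as $p^2q$, $p^aq^b$ with $a,b\ge 2$, or $4pq$, and your argument says nothing about these. The paper's partial handling of non-square-free levels (Section \ref{CMcase}) goes through inner twists and Ribet's structure theory for $\End_{\QQbar}(A)\otimes\QQ$, and it needs \emph{absolute} simplicity --- a hypothesis the conjecture does not grant and which the paper explicitly flags as the obstruction to proving more.

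There is a second, independent gap: the conjecture is about the \emph{geometric} congruence number, while the contradiction you derive --- via Lemma \ref{CuspidalLemma1}, Proposition \ref{CuspidalProp2}, and Lemma \ref{AlgCongLemm} --- is that the \emph{algebraic} congruence number $r_A$ is even. Passing from one to the other requires Theorem \ref{ARSCongExp}, which equates the $2$-adic valuations of the two exponents only when $4\nmid N$ (and is stated for newform quotients $A_f$). For square-free $N$ this translation is available and your argument then does recover the paper's partial result, but you never invoke it, and for $4\mid N$ it is precisely the point of failure: odd geometric congruence number there does not imply odd algebraic congruence number, which is why the paper devotes a separate section to $4\mid N$ and still cannot close the case $N=2^3pq$ with $|p^\alpha-q^\beta|=8$. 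In short, your write-up proves (modulo citing the ARS comparison) the square-free portion of the conjecture, but presents it as a proof of the whole statement; the remaining cases are genuinely open in the paper.
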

Using \cite{ARS} we get that when $4 \ndiv N$ then the having odd algebraic congruence number 
is the same as having odd geometric congruence number. Therefore, we can 
prove the above conjecture when $4 \ndiv N$ and $A$ is absolutely simple.

\subsection{Congruences Classes of Primes}
    Let $A$ be a simple modular Abelian variety with odd congruence number
    and conductor $N=pq$, with $p$ and $q$ odd. We want to find congruences
    that $p$ and $q$ need to satisfy. 
    By lemma \ref{AtkinLehnerSign} we know that $w_{pq}$ is acting as $-1$ on $A$.
    Therefore, we may assume without loss of generality that $w_q$ is acting
    trivially on $A$ and $w_p$ is acting as $-1$.
    Then $z=(1\pm w_p)(1- w_q)P_1$ lives in $C \cap B$, since $\pi(z)=0$.
    (The argument is the same as \ref{CongNumberPQ}. To be precise, one can
    check that $\alb(w_q)(z)=a+z$ and $\alb(w_p)(z)=b-z$ were $a$ is a two
    torsion point. Therefore
    \begin{eqnarray*}
        \pi(P_1)&=& 0, \\
        \pi(P_p)&=& b, \\
        \pi(P_q)&=& a, \\
        \pi(P_{pq})&=& a+b,
    \end{eqnarray*}
    and hence $\pi(z)=a+a=0$.)
    Now by proposition \ref{GeneralizedOgg} we have that the order of $z$ is
    $\num\left( {(p\pm 1)(q- 1) \over 24} \right).$
    Therefore if $A$ has odd congruence number, we must have that
    $p \equiv \pm 3 \pmod 8$ and $q \equiv 3 \pmod 4$.

    Similarly, if $A$ has odd congruence number with conductor $N=2p$ then 
    we know that $w_2$ acts trivially and $w_p$ acts as $-1$ on $A$.
    As before we get that $z=(1-w_2)(1\pm w_p)P_1$ lives in $C \cap B$.
    Furthermore, note that 
    \[ \alpha_*(z)=\alpha_*(P_1-P_2\pm(P_p-P_{2p}))=P_1-P_1\pm(P_p-P_p)=0, \]
    which implies that $z \in C[2]^{2-\new} \cap B$.
    The order of $z$ is $\num({p\pm 1 \over 8})$ which means
    that if $A$ has odd congruence number then $p \not \equiv \pm 1 \pmod {16}$.
    However, we also know that $w_p$ can not have any fixed points.
    This implies that $-2$ is not a quadratic residue mod $p$, which 
    in turn means that $p \equiv 5$, $7$, $13$, or $15 \pmod {16}$.
    Therefore $p \equiv 5$, $7$, or $13$.

    We collect the above in the following
    \begin{thm}
        \label{modabclassification}
        Let $A$ be a new modular Abelian variety with odd congruence number
        and semistable reduction everywhere of conductor $N$.
        Then one of the following must be true
        \begin{enumerate}
            \item $N=pq$ and $p \equiv \pm 3 \pmod 8$ and $q \equiv 3 \pmod 4$.
            \item $N=2p$ and $p \equiv 5$, $7$, or $13 \pmod {16}$.
        \end{enumerate}
    \end{thm}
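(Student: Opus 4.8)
The plan is to read this theorem as the collation of the case analysis of this subsection, prefaced by a reduction on $N$. First I would note that semistable reduction everywhere forces the conductor $N$ to be squarefree, so Theorem~\ref{CongNumberPQ} applies and $N$ is a prime or a product of two primes; the prime case imposes no further condition, so only $N=pq$ (both odd) and $N=2p$ remain. Exactly as in the proof of Theorem~\ref{CongNumberPQ}, the hypotheses supply a rational two-torsion point and hence $A^\dual[\mm]\neq 0$ via Lemma~\ref{LemRib2}; this is precisely the input Proposition~\ref{CuspidalProp2} needs, so throughout it suffices to exhibit, for each bad congruence class, a nonzero element of $B\cap C[2]^{2-\new}$, whose existence would force an even congruence number.

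For $N=pq$ with $p,q$ odd, Lemma~\ref{AtkinLehnerSign} gives $(w_{pq})_*=-1$ on $A$, so after relabeling I may take $(w_q)_*$ trivial and $(w_p)_*=-1$. I would set $z=(1\pm w_p)(1-w_q)P_1$ and check $z\in B=\ker(\pi)$ by tracking the Albanese constants: writing $\alb(w_q)(x)=x+a$ and $\alb(w_p)(x)=b-x$ with $a,b\in A[2]$, one gets $\pi(P_1)=0$, $\pi(P_p)=b$, $\pi(P_q)=a$, $\pi(P_{pq})=a+b$, whence $\pi(z)=2a=0$ for either sign. Proposition~\ref{GeneralizedOgg} then computes the order of $z$ as $\num\!\big((p\pm1)(q-1)/24\big)$. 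Requiring this to be odd for \emph{both} signs (a halving of an even-order $z$ would land in $B\cap C[2]^{2-\new}$) amounts to the $2$-adic bounds $\nu_2(p\pm1)+\nu_2(q-1)\le 3$; since $\max(\nu_2(p+1),\nu_2(p-1))\ge 2$ and $\nu_2(q-1)\ge 1$ for odd $p,q$, both hold precisely when $p\equiv\pm3\pmod 8$ and $q\equiv 3\pmod 4$, giving conclusion (1).

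For $N=2p$, Lemma~\ref{AtkinLehnerSign2} gives $(w_2)_*$ trivial and $(w_p)_*=-1$. The same recipe with $z=(1-w_2)(1\pm w_p)P_1$ again yields $z\in B$, and the additional computation $\alpha_*(z)=0$ shows $z$ is new at $2$, so an even order would once more contradict Proposition~\ref{CuspidalProp2}; here Proposition~\ref{GeneralizedOgg} gives order $\num\!\big((p\pm1)/8\big)$, whose oddness for both signs is $\max(\nu_2(p+1),\nu_2(p-1))\le 3$, i.e. $p\not\equiv\pm1\pmod{16}$. Finally, because $(w_2)_*$ is trivial the involution $w_2$ can have no fixed point (else Lemma~\ref{AtkinLehnerSign} would give $(w_2)_*=-1$), so Lemma~\ref{AtkinLehnerFixed} forces $-2$ to be a nonsquare mod $p$, i.e.\ $p\equiv 5,7,13,15\pmod{16}$; intersecting with $p\not\equiv\pm1\pmod{16}$ deletes $15$ and leaves conclusion (2). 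The main obstacle throughout is the bookkeeping that secures $z\in\ker(\pi)$ --- fixing the two-torsion Albanese constants $a,b$ and verifying $\pi(z)=0$ independently of the sign --- together with the $2$-adic valuation analysis that converts ``the order of $z$ is odd'' into the precise congruences modulo $8$ and $16$.
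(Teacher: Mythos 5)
Your proposal matches the paper's argument essentially step for step: the reduction via Theorem~\ref{CongNumberPQ}, the choice of $z=(1\pm w_p)(1-w_q)P_1$ (resp.\ $(1-w_2)(1\pm w_p)P_1$) killed by $\pi$, the order computation from Proposition~\ref{GeneralizedOgg}, and the fixed-point/quadratic-residue argument eliminating $p\equiv 15\pmod{16}$ (where you correctly read the paper's ``$w_p$ has no fixed points'' as referring to $w_2$). The only cosmetic difference is that you make explicit the $2$-adic valuation bookkeeping and the appeal to Proposition~\ref{CuspidalProp2}, which the paper leaves implicit; the argument is the same.
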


\chapter{Elliptic Curves of Odd Modular Degree}
\label{chap3}
	In this chapter we specialize the results of previous chapter to the
case of elliptic curves. Doing so we will show that the rank of all 
such elliptic curves must be $0$. Furthermore we study elliptic curves
with an odd geometric congruence number, which is just the modular degree, 
When $4 \ndiv N$, then by result of Agashe, Ribet, and Stein
\cite{ARS}, having odd modular degree and odd congruence number are the
same. As such, the theorems of chapter \ref{chap2} can be stated in
terms of modular degree. 
When $4 | N$, it is possible to have odd congruence number, but
even modular degree, as such the methods of section \ref{sec23} do not
apply. However, by studying the conductor of elliptic curves with the full 
$2$-torsion structure, and studying the cuspidal subgroup more carefully
we will show that if such elliptic curves don't have conductor $4p$,
as it is conjectured by Stein and Watkins \cite{SW}, then they must satisfy
some stringent conditions. Unfortunately we are unable to rule these cases
out at this point.

Notice that for an elliptic curve $E$ we have that $E^\dual \isom E$. Therefore
we treat $E^\dual$ and $E$ as the same object.
\section{Complex Multiplication}
  If $p^2 | N$ for an odd prime $p$, then by section \ref{CMcase} we have that
  $E$ has a complex multiplication. We also showed that if $16 | N$ then 
  $E$ must have complex multiplication.
  There are only finitely many elliptic curves with complex multiplication 
  and conductor $2^mp^n$ for some prime number $p$. Here is the list of all 
  such elliptic curves that have odd modular degree: 
  $E=27A, 32A, 36A, 49A, 243B$.

  So, we will focus our attention to elliptic curves that are semistable away
  from $2$.
\section{Level $N \not \equiv 0 \pmod 4$}
 Consider an elliptic curve $E/\QQ$ such with conductor $N$. Assume that
 $E$ has odd modular degree. Then by results of \cite{ARS} we have that
 $E$ has odd congruence number.
 Therefore, applying results of previous chapter we get
 that $N$ is prime, or a prime power were $E$ has complex multiplication, or 
 product of two distinct prime numbers.
 Here we will study these cases in detail. 
 \subsection{Prime level}
  Given elliptic curve $E$ with good reduction at $2$ and $3$, one can check
  that the torsion subgroup of $E$ has size at most $5$. As result, elliptic
  curves with prime conductor have torsion of size at most $5$. Mestre and Oesterl{\`e}
  \cite{MO89} have studied elliptic curves of prime conductor, and they've showed
  that aside from elliptic curves $11A$, $17A$, $19A$, and $37B$, all such elliptic
  curves have torsion subgroup of $\ZZ/2\ZZ$ or trivial. The above cases have
  torsion structure $\ZZ/5\ZZ$, $\ZZ/4\ZZ$, $\ZZ/3\ZZ$, and $\ZZ/3\ZZ$ respectively.
  Furthermore, they show that if $E_\tors$ is $\ZZ/2\ZZ$ then $E$ is a Neumann-Setzer
  curve. In that case, the rank of the elliptic curve is $0$. We will give a proof
  of this fact, since it is fairly straightforward.
  \begin{thm}
	  Let $E$ be an elliptic curve over $\QQ$ with prime conductor $N$. Assume that
	  $E_\tors$ is nontrivial. Then $L(E,1) \neq 0$, and hence $E(\QQ)$ has rank
	  $0$.
  \end{thm}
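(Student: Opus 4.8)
The plan is to combine the classification of prime-conductor elliptic curves with nontrivial torsion, due to Mestre and Oesterl\'e \cite{MO89} and recalled just above, with the nonvanishing criterion supplied by the cuspidal subgroup. By that classification the only possibilities are the four sporadic curves $11A,17A,19A,37B$, whose torsion is $\ZZ/5\ZZ$, $\ZZ/4\ZZ$, $\ZZ/3\ZZ$, $\ZZ/3\ZZ$, and the Neumann--Setzer curves, whose torsion is $\ZZ/2\ZZ$. For the four sporadic curves I would simply verify $L(E,1)\neq 0$ by direct computation, since each is a single explicit curve and $L(E,1)/\Omega_E$ is a nonzero rational read off from the modular symbol $\{0,\infty\}$; Kolyvagin's theorem then gives rank $0$ for these four. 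So the genuine content is the Neumann--Setzer family, and the rest of the argument concerns it.

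For a Neumann--Setzer curve $E$ of conductor $N=p$ the key object is the cuspidal divisor $c_0=P_p-P_1=(0)-(\infty)$, which by Proposition \ref{GeneralizedOgg} (the case $t=1$) generates a cyclic group of order $n=\num\!\left({p-1\over 12}\right)$. The first and central step is to show that $c_0$ survives under the optimal quotient map $\pi_* : J_0(p)\rightarrow E$, that is $\pi_*(c_0)\neq 0$. The inputs are: $E$ carries a rational point of order $2$, so by Lemma \ref{LemRib2} we have $E^\dual[\mm]\neq 0$ for the Eisenstein maximal ideal $\mm=(2,\,T_p-1,\,T_l-(l+1))$; and for a Neumann--Setzer prime one has $p=u^2+64$ with $u$ odd, hence $p\equiv 1\pmod 8$, which forces $\ord_2(n)=\ord_2(p-1)-2\geq 1$, so the $2$-torsion $C[2]$ of the cuspidal group (which equals $C[2]^{2-\new}$ here, as $2\ndiv N$) is nontrivial and generated by $(n/2)c_0$. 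Proposition \ref{CuspidalProp2} then yields $B\cap C[2]=0$, where $B=\ker(\pi_*)$. Since $C=\langle c_0\rangle$ is cyclic, $c_0\in B$ would force $C[2]\subset B$, a contradiction; therefore $c_0\notin B$, i.e. $\pi_*(c_0)\neq 0$.

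Next I would convert this into the nonvanishing of $L(E,1)$. Over $\CC$ the parametrization is $z\mapsto 2\pi i\int_z^{\infty}f_E$, so $\pi((\infty))=0$ and $\pi((0))\equiv \pm L(E,1)/c \pmod{\Lambda_E}$ for the nonzero Manin constant $c$; thus $\pi_*(c_0)=\pi((0))\neq 0$ forces $L(E,1)/c\notin\Lambda_E$, and in particular $L(E,1)\neq 0$. Finally $L(E,1)\neq 0$, together with modularity and Kolyvagin's theorem, gives $\operatorname{rank}E(\QQ)=0$, as claimed. One should note that the corollary to Lemma \ref{AtkinLehnerSign} already shows the analytic rank is even; the role of the argument above is precisely to upgrade ``even'' to ``exactly $0$'' by exhibiting a nonzero value of $L$ at $s=1$.

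The hard part will be the middle step, the nonvanishing $\pi_*(c_0)\neq 0$. Two points need care. First, to invoke Proposition \ref{CuspidalProp2} one needs the odd-congruence-number hypothesis; for $N=p$ prime we have $4\ndiv N$, so by \cite{ARS} odd modular degree and odd congruence number coincide, and this is the running assumption of the present section, so the hypothesis is in force. Second, one must confirm the elementary arithmetic that $n$ is even for Neumann--Setzer primes, as above. If instead one wants the statement for \emph{all} prime-conductor curves with torsion, with no hypothesis on the modular degree, then Proposition \ref{CuspidalProp2} is unavailable and the identification of $\pi_*(c_0)$ with the rational $2$-torsion point of $E$ must be made directly through Mazur's theory of the Eisenstein ideal at prime level; establishing that the rational $2$-torsion point is the image of the cuspidal divisor is exactly where the real difficulty would lie.
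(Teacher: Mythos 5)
Your route is genuinely different from the paper's, and as written it proves less than the stated theorem. The paper's proof is a single, uniform application of Mazur \cite{Maz79}: at prime level $J_0(N)_\tors$ is cyclic, generated by the cuspidal divisor $P_1-P_N$, and the torsion of \emph{any} optimal quotient $A$ of $J_0(N)$ is generated by the image of that divisor. Hence $E_\tors\neq 0$ forces $\pi_*(P_1-P_N)\neq 0$, and since $\pi(P_N)\equiv L(E,1)\pmod{\Lambda_E}$ this gives $L(E,1)\neq 0$ at once --- with no case division, no appeal to the Mestre--Oesterl\'e classification, and no hypothesis on the modular degree. What you treat in your closing paragraph as ``where the real difficulty would lie'' is exactly the content of the citation the paper leans on; it is not an open step but the intended engine of the proof.

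The concrete gap in your version is that the theorem, as stated, assumes only prime conductor and nontrivial torsion; it does not assume odd modular degree or odd congruence number. Your treatment of the Neumann--Setzer case runs entirely through Proposition \ref{CuspidalProp2}, whose hypotheses include odd congruence number, so your argument is silent about a Neumann--Setzer curve of even modular degree (and such curves exist --- the conclusion $L(E,1)\neq 0$ is still asserted for them). Within that restricted setting your chain of deductions is sound: $p=u^2+64$ with $u$ odd gives $p\equiv 1\pmod 8$, so $n=\num\bigl((p-1)/12\bigr)$ is even and $C[2]\neq 0$; Lemma \ref{LemRib2} gives $E[\mm]\neq 0$; Proposition \ref{CuspidalProp2} gives $B\cap C[2]=0$; and cyclicity of $C$ then forces $P_1-P_p\notin B$. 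So you have a correct proof of a weaker statement (the four sporadic curves plus Neumann--Setzer curves of odd congruence number), which happens to cover the curves the chapter ultimately cares about, but to recover the theorem as stated you must replace the congruence-number machinery by Mazur's description of the torsion of optimal quotients of $J_0(p)$, as the paper does.
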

  \begin{proof}
      Recall that 
      \[L(E,1)=2\pi i \int_0^{i \infty} f_E(z) dz \equiv \pi(P_N) \pmod \Lambda_E,\]
      were $\CC/\Lambda_E \isom E(\CC)$.
      Therefore if $L(E,1)=0$ then $\pi(P_N)=0$, or alternatively $\pi_*(P_1-P_N)=0$.
      By \cite{Maz79} we know that $J_0(N)_\tors$ is generated by the cusp $P_1-P_N$,
      and for any Abelian quotient of $J_0(N) \rightarrow A$, we have $A_\tors$ is generated
      by the image of $\pi_*(P_1-P_N)$. Since we are assuming that $E$ has nontrivial torsion
      structure, we must have that $\pi_*(P_1-P_N)$ is nontrivial, which implies $L(E,1) \neq 0$.
  \end{proof}

  When $E$ has trivial torsion structure, then we are currently unable to prove any positive
  result for $E$. In the next chapter we recall 
  an argument of Dummigan to justify
  Stein and Watkins conjecture of the ranks of elliptic curves.
 \subsection{Level $N=2p$}
  When $N$ is a product of two distinct primes, computer calculation shows
  us that the even conductor and odd conductors behave differently. In this
  subsection, we will study the even case that is $N=2p$ with $p$ an odd
  prime. Specifically, we want to show that $L(E,1) \neq 0$. 
  One can prove this by studying the cusps in $J_0(N)$, however in this case
  it seems easier to prove this using analytic tools. 

  Specifically let $f_E(q)=\sum a_nq^n$ be the modular form attached to the 
  elliptic curve $E$, and let $\Omega_E$ be the real period of $E$.
  Note that $L(f_E, 1) \in \RR$ since the fourier coefficients of $f_E$ are
  rational integers. Therefore the order of $\pi(P_{2p})$ is the order of
  $L(f_E,1) \in \RR/{\Omega_E \ZZ}$.
  We know that $L(f_E,s)$ has an Euler expansion
  \[ L(f_E,s)= \prod_p L_p(f_E,s),\]
  and we have $L_2(f_E,s)={1 \over 1-a_22^{-s}}.$
  Similarly 
  \begin{eqnarray*}
      \pi(P_{p})&=& 2\pi i\int_{1\over 2}^{i\infty} f_E(z)dz \\
      &=& 2\pi i \int_0^{i\infty} f_E(z+1/2)dz  \\
      &=& 2\pi i \int_0^{i\infty} \sum (-1)^{n}a_n q^ndz
  \end{eqnarray*}
  which implies that $\pi(P_p)$ can be written as $L(g,1)$ were $L(g,s)$
  has an Euler product expansion
  \begin{eqnarray*}
      L(g,s)&=&  ({-1+{a_2 \over 2^s}+{a_4\over 4^s}+\dots})\prod_{p>2} L_p(f_E,s) \\
      &=& -{1-a_2 2^{1-s} \over 1-a_2 2^{-s}} \prod_{p>2}L_p(f_E,s)
  \end{eqnarray*}
  Therefore $L(g,1)=L(f_E,1)(a_2-1),$
  and more appropriately for us 
  \[ \pi(P_p)\equiv (a_2-1)\pi(P_{2p})\pmod {\Omega_E \ZZ}.\]
  
  We know that if $E$ has an odd congruence number, then $w_2$ is acting
  trivially, which implies that $a_2=-1$. Therefore
  \[ \pi(P_p) \equiv -2\pi(P_{2p})  \pmod {\Omega_E \ZZ}.\]
  However, we also know that $P_{2p}=w_2(P_{p})$, and $\pi(w_2(P_{p}))=\pi(P_{p})+\alpha$
  were $\alpha$ is a two torsion point in $E$. Furthermore since both
  $\pi(P_p)$ and $\pi(P_{2p})$ are both equivalent to a real number, then
  we get that $\alpha$ is equivalent to a real number as well, which 
  implies $\alpha \equiv {\Omega_E \over 2} \pmod {\Omega_E \ZZ}.$
  Therefore
  \begin{eqnarray*}
      \pi(P_p) &\equiv& \pi(P_{2p})+{\Omega_E \over 2} \pmod {\Omega_E \ZZ}, \\
      & \equiv & -2\pi(P_{2p}) \\
      \Rightarrow -3\pi(P_{2p}) &\equiv& {\Omega_E \over 2} \pmod {\Omega_E \ZZ}, \\
      \Rightarrow \pi(P_{2p}) &\equiv& \Omega_E( {k \over 3}-{1 \over 6}) \pmod {\Omega_E \ZZ}
  \end{eqnarray*}
  for some integer $k$. Therefore $\pi(P_{2p}) \neq 0$ and hence 
  $L(f_E,1) \neq 0$. Furthermore we know that $\pi(P_{2p})$ will either
  be a $6$ torsion point (for $k \equiv 0$ or $1 \pmod 3$), or a two
  torsion point (for $k \equiv 2 \pmod 3$).

      Note that in either case, we have an elliptic curve with conductor
      $2p$ having a rational two torsion points. Such elliptic curves have
      been studied by Ivorra \cite{Ivorra}, and one can use his techniques
      to put stringent conditions on what values $p$ can be.
      In particular he shows that if $p \geq 29$ then 
      there is an integer $k \geq 4$ such that
      one of $p+2^k$, $p-2^k$, or $2^k-p$ is a perfect square.
      However, we already know from theorem \ref{modabclassification} that 
      $p \equiv 5$, $7$, or $13 \pmod{16}$. Putting these
      two together we get that $p \equiv 7 \pmod {16}$, and 
      $p=2^k-m^2$.
      In fact, in this case Ivorra's result tell us that
      $7 \leq k < f(p)$ were
      \begin{eqnarray*}
      f(p)= 
      \begin{cases}
          18+2\log_2 n & \mbox{ if $n<2^{96}$,} \\
          435+10\log_2 n & \mbox{ if $n>\geq 2^{96}$}
      \end{cases},
      \end{eqnarray*}
      and our elliptic curve is isogeneous to
      \[ y^2+xy=x^3+{m-1 \over 4}x^2+2^{k-6}x.\]
      Furthermore, quick search through the Cremona database, shows us
      that the only elliptic curves with odd modular degree and conductor
      $2p$ with $p \leq 29$ are $E=14A$ and $E=46A$, and both of these
      are of the form above.

 \subsection{Level $N=pq$}
  In this subsection, we will study elliptic curves of odd modular degree 
  and conductor $pq$ were $p$ and $q$ are both odd.
  By theorem \ref{modabclassification} we know that $p \equiv \pm 3 \pmod 8$
  and $q \equiv 3 \pmod 4$. We will show that with few exceptions, 
  $p \equiv 3 \pmod 8$ and $q \equiv 3 \pmod 8$. Given this choise,
  $w_q$ is acting trivially on $E$.
  Furthremore, all such elliptic curves have rank $0$ over $\QQ$.

  We will first show that $E[2] = \ZZ/2 \times \ZZ/2$. 
  From section \ref{sec22} we have that 
  $\alb(w_p)$ is translation by a two torsion point $P \in E[2]$.
  Considering this map over $\FF_2$, we get that this point $P \in E[2]$
  will not vanish under the reduction mod $2$ map.
  \begin{lemma}
    Let $E$ be a rational elliptic curve with conductor $pq$ with an
    odd congruence number, as discussed earlier in this section.
    If $(p,q) \not \equiv (3,3) \pmod 8$, then 
    $E[2](\QQ)=(\ZZ/2)^2$
	\label{Technical1}
  \end{lemma}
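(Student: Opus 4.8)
The plan is to reduce the rationality of the full $2$-torsion to a statement about the square class of the minimal discriminant $\Delta_E$, and then to determine that square class place by place using the reduction data already assembled in this section. Since $E$ has odd congruence number, Lemma \ref{AtkinLehnerSign} gives that $w_p$ acts as $-1$ and $w_q$ acts trivially on $E$; via the standard relation between the Atkin--Lehner eigenvalue and $a_\ell$ for $\ell \parallel N$, this says that $E$ has split multiplicative reduction at $p$ (so $a_p = 1$) and non-split multiplicative reduction at $q$ (so $a_q = -1$). Moreover the translation point $P$ produced by $\alb(w_q)$ (cf. Lemma \ref{twotorsionlemma} and the surrounding discussion) is a nonzero rational $2$-torsion point that does not vanish modulo $2$; in particular $E$ has good reduction at $2$, the order of $\widetilde{E}(\FF_2)$ is even, and $a_2 = \pm 1$ with ordinary reduction.

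Having one rational $2$-torsion point $P$, I would write $E : y^2 = (x - e_1)(x^2 + ax + b)$ with $e_1 \in \QQ$ the abscissa of $P$, so that $E[2](\QQ) = (\ZZ/2)^2$ if and only if $a^2 - 4b$ is a perfect square. Since the discriminant of the cubic differs from $a^2 - 4b$ by a square, this is equivalent to $\Delta_E$ being a perfect square. Because $P$ is rational, the action of $\Gal(\QQbar/\QQ)$ on $E[2]$ lands in a point-stabilizer in $\GL_2(\FF_2)$ and hence factors through the quadratic character $\chi = \chi_{\Delta_E}$ cutting out $\QQ(\sqrt{\Delta_E})$; full $2$-torsion is exactly the statement $\chi = 1$. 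As $E$ is semistable of conductor $pq$ with good reduction at $2$, we have $\Delta_E = \pm p^{n_p} q^{n_q}$ and $\chi$ is unramified outside $\{2, p, q, \infty\}$, so it suffices to show $\chi$ is unramified at each of these four places; given the shape of $\Delta_E$ this amounts to showing $n_p$ and $n_q$ are even and $\Delta_E > 0$.

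The core of the argument is then to evaluate the local components $\chi_2,\chi_p,\chi_q,\chi_\infty$. At $p$ and $q$ I would use the Tate parametrization: the ramification of $\chi$ at $\ell \in \{p,q\}$ is governed by the parity of $n_\ell = v_\ell(\Delta_E)$ (equivalently, $\overline{\rho}_{E,2}$ is unramified at $\ell$ precisely when $2 \mid n_\ell$), while the split/non-split distinction together with the image of $P$ in the component group controls the relevant square classes modulo $p$ and $q$. At $2$ the class of $\Delta_E$ in $\QQ_2^{\times}/(\QQ_2^{\times})^2$ is read off from $\Delta_E \pmod 8$, which through $\Delta_E = \pm p^{n_p}q^{n_q}$ couples the sign of $\Delta_E$ to the residues of $p$ and $q$ modulo $8$. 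Feeding in the congruences $p \equiv \pm 3 \pmod 8$ and $q \equiv 3 \pmod 4$ from Theorem \ref{modabclassification} and invoking quadratic reciprocity to render the four local contributions consistent, I would show that the only configuration in which some $\chi_v$ can be nontrivial is the excluded one $(p,q) \equiv (3,3) \pmod 8$. Under the hypothesis $(p,q) \not\equiv (3,3) \pmod 8$ every local component is trivial, hence $\chi = 1$ and $E[2](\QQ) = (\ZZ/2)^2$.

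I expect the delicate part to be the analysis at the two special places. At the residue characteristic $2$, good reduction still permits $E[2]$ to be ramified, so the sign of $\Delta_E$ cannot simply be asserted: it must be extracted from the ordinary reduction together with the $\bmod 8$ congruences. At the non-split prime $q$ one must check that twisting by the unramified quadratic character leaves the $2$-torsion Galois module unchanged while still correctly recording the parity of $n_q$. Threading quadratic reciprocity through these two local computations so as to isolate exactly the $(3,3)$ exception is the heart of the matter.
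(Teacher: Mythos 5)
Your reduction of the statement to ``$\Delta_E$ is a perfect square'' is sound, and the place $2$ can indeed be handled (with the normalized model $y^2+xy=x^3+a_2x^2+a_4x$ used later in Lemma \ref{technical4} one finds $\Delta_E=a_4^2((4a_2+1)^2-64a_4)\equiv 1\pmod 8$, so $\Delta_E$ is automatically a $2$-adic square). The genuine gap is at $p$, $q$ and $\infty$: nothing in your outline actually pins down the parities of $v_p(\Delta_E)$ and $v_q(\Delta_E)$ or the sign of $\Delta_E$. The split/non-split type of the multiplicative reduction (equivalently the Atkin--Lehner eigenvalues, or $a_p=1$, $a_q=-1$) places no constraint on $v_\ell(\Delta_E)\bmod 2$, and quadratic reciprocity is vacuous here: the product formula is automatically satisfied for whichever square class $\Delta_E$ actually lies in, so it cannot be used to eliminate candidates. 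Concretely, writing the square class of $\Delta_E$ as $\epsilon p^a q^b$ with $\epsilon=\pm1$ and $a,b\in\{0,1\}$, the $2$-adic condition $\Delta_E\equiv 1\pmod 8$ combined with $p\equiv\pm3\pmod 8$ and $q\equiv 3\pmod 4$ still leaves the classes $-q$ (when $q\equiv 7\pmod 8$) and $-pq$ (when $p\equiv 5$ and $q\equiv 3\pmod 8$) alive, and both are compatible with your hypothesis $(p,q)\not\equiv(3,3)\pmod 8$. Your sketch offers no mechanism to kill them; the appeal to ``the image of $P$ in the component group'' is not developed and does not control these parities.

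What is missing is precisely the point at which the odd congruence number must re-enter beyond fixing the Atkin--Lehner signs. The paper's proof injects it through the cuspidal subgroup: assuming $E[2](\QQ)=\ZZ/2$, it first bounds $E_{\tors}$ to $\ZZ/2$ or $\ZZ/4$ by Hasse--Weil over $\FF_2$, then observes that specific cuspidal divisors ($P_{pq}+P_q-2P_1$ in the $\ZZ/4$ case, $P_1-P_{pq}$ in the $\ZZ/2$ case) are forced into $\ker(\pi)$, hence by Proposition \ref{CuspidalProp2} must have odd order; computing those orders via Lemma \ref{cusplemma2} and Proposition \ref{GeneralizedOgg} shows they are even unless $p\equiv q\equiv 3\pmod 8$. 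Some global input of this kind --- orders of cuspidal divisors in $J_0(pq)$, or an equivalent congruence of modular forms modulo $2$ --- is indispensable; a purely local analysis of $\overline{\rho}_{E,2}$ together with reciprocity cannot prove the lemma.
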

  \begin{proof}
    Let $P \in E[2](\QQ)$ be a rational two torsion point that does not vanish
    mod $2$. Assume that $E[2](\QQ)=\ZZ/2$.
	Say $E$ has a torsion point $Q$ of order $2m+1$. Then $\#E(\FF_2)\geq 2(2m+1) \geq 6$
	which contradicts the Hasse-Weil bound. Similarly, assume
	that $E$ has a rational $8$ torsion point $Q \in E(\QQ)$. 
    Then $4Q$ has order $2$. 
	Since we are assuming $E[2]=\ZZ/2$
	we get that $4Q=P$. Applying the reduction map $\pi$ we get that 
    $4\pi(Q)=\pi(P) \neq 0$.
	Therefore $\pi(Q)$ has order $8$ in $E(\FF_2)$. This contradicts the Hasse-Weil bound.
	Therefore $E_\tors=\ZZ/4$ or $\ZZ/2$.

	Now assume that $E_\tors=\ZZ/4$. As discussed earlier $\alb(w_p)$ is
    translation by $P$. Let $\alb(w_q):z \goto Q-z$.
    If $Q$ has order $4$ then $2Q=P$. In that case
    \[ \pi_*(P_{pq}+P_q-2P_1)=(Q+P)+(Q)-2(0)=0.\]
    However, by \ref{ligozatprop} we get that 
    $T=P_{pq}+P_q-2P_1$ has an even order. To see this note that
    \[\Lambda^{-1}T={24 \over (p^2-1)(q^2-1)} \begin{pmatrix}
        1-p-2pq \\
        1-p+2q \\
        2p-q+pq \\
        -2-q+pq
    \end{pmatrix}.\]
    Since $p \equiv \pm 3 \pmod 8$ and $q \equiv 3 \pmod 4$, we get $v_2(p^2-1)=3$
    and therefore this has even order if some element
    in the above vector has $2$-valuation less than the $2$-valuation of
    $q^2-1$. Since $q$ is odd, $v_2(q^2-1)\geq 3$, while for all possible
    congruences we that $v_2(2p-q+pq) \leq 2$.
    Therefore $C[2] \cap B$ is not empty, which implies $A$ must have
    even congruence number.

	Now assume that $E_{\tors}=\ZZ/2$. Then, either $P_1-P_{pq}$ goes to the
    origin or $P_1-P_q$ goes to the origin.
    We know that the order of $P_1-P_q=(p^2-1)(q-1)/24$ which is even,
    hence for $E$ to have odd congruence number we get $P_1-P_{pq}$ goes
    to the origin.
    In that case, $P_1-P_{pq}$ has an odd order if and only if 
    $p \equiv q \equiv 3 \pmod 8$.
  \end{proof}

  Now we prove a lemma that we need to show elliptic curves of odd
  congruence number in our situation have rank $0$.
  \begin{lemma}
      Let $E$ be a semistable elliptic curve with conductor $pq$
      and $p \equiv q \equiv 3 \pmod 8$, having
      a two torsion point, then $E[2](\QQ)=(\ZZ/2)^2.$ 
      \label{technical4}
  \end{lemma}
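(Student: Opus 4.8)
The plan is to show that the $2$-division field $K=\QQ(E[2])$ is trivial. Since $E$ already carries a rational point of order $2$, the action of $\Gal(\QQbar/\QQ)$ on $E[2]\isom(\ZZ/2)^2$ fixes a line, so its image lies in the order-$2$ stabilizer of that line and $[K:\QQ]\le 2$; putting this point at the origin, $K=\QQ(\sqrt{\Delta_{\min}})$, where $\Delta_{\min}$ is the minimal discriminant. As $E$ is semistable of conductor $pq$ we have $\Delta_{\min}=\pm p^{s}q^{t}$, so $K$ is unramified outside $\{2,p,q\}$ and is determined by the squarefree part $m$ of $\pm p^{s}q^{t}$. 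The lemma is exactly the assertion $m=1$.

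First I would fix the behaviour at $2$. Since $E$ has good reduction at $2$ (and this reduction is ordinary, because the rational $2$-torsion point cannot lie in the kernel of reduction, the latter having no $2$-torsion), the mod-$2$ representation is unramified at $2$; equivalently $\Delta_{\min}\equiv 1\pmod 4$ after removing square factors. Hence $m$ is a squarefree divisor of $\pm pq$ with $m\equiv1\pmod4$, and feeding in $p\equiv q\equiv3\pmod8$ leaves only $m\in\{1,\,-p,\,-q,\,pq\}$. It remains to exclude the three nontrivial values, i.e. to rule out $K\in\{\QQ(\sqrt{-p}),\,\QQ(\sqrt{-q}),\,\QQ(\sqrt{pq})\}$.

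For this I would invoke the cuspidal machinery of this chapter together with the odd congruence number. By \ref{AtkinLehnerSign} we have $(w_{pq})_*=-1$, and after relabelling we may assume $(w_q)_*=+1$ and $(w_p)_*=-1$; as in the subsection on congruence classes of primes, the cusp $z=(1\pm w_p)(1-w_q)P_1$ then satisfies $\pi(z)=0$, so $z\in C\cap B$ with $B=\ker(\pi)$, and \ref{GeneralizedOgg} computes its order as $\num\!\big((p\pm1)(q-1)/24\big)$. If $K$ is nontrivial then $E[2](\QQ)=\ZZ/2$, and in the cases $m=-p$ and $m=-q$ exactly one of $v_p(\Delta_{\min}),v_q(\Delta_{\min})$ is odd; I would choose the sign in $(1\pm w_p)$ so that \ref{GeneralizedOgg} yields an even order, take the $2$-primary part of $z$ to land in $C[2]$, and verify with the appropriate $(\alpha_\ell)_*$ that it is $2$-new and lies in $B$. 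By \ref{CuspidalProp2} this forces the congruence number to be even, contradicting our hypothesis and eliminating $m=-p$ and $m=-q$.

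The hard case, and the main obstacle, is $m=pq$, that is $K=\QQ(\sqrt{pq})$ with both $v_p(\Delta_{\min})$ and $v_q(\Delta_{\min})$ odd. Precisely because $p\equiv q\equiv3\pmod8$, the Ogg-type orders $\num\!\big((p\pm1)(q\pm1)/24\big)$ coming from the cusps $P_1,P_p,P_q,P_{pq}$ are odd for the relevant sign pattern, so the construction of the previous paragraph produces only odd-order cusps and breaks down. To close this case I would instead use the reduction types dictated by the Atkin--Lehner signs, namely split multiplicative reduction at $p$ (since $(w_p)_*=-1$ gives $a_p=+1$) and non-split at $q$ (since $(w_q)_*=+1$ gives $a_q=-1$), and read the parity of $v_\ell(\Delta_{\min})$ off the local Tate parametrisation of $E[2]$, in which the surviving rational $2$-torsion point is the image of $-1\in\QQ_\ell^{\times}/q_\ell^{\ZZ}$. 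The goal is to show that the pattern ``both valuations odd'' is incompatible with such a split/non-split pair, i.e. that $\QQ(\sqrt{pq})$ simply cannot occur as the $2$-division field here; combined with the earlier cases this leaves $m=1$ and hence $E[2](\QQ)=(\ZZ/2)^2$. I expect this local parity analysis at $p$ and $q$ to be the delicate step, since the congruence $p\equiv q\equiv3\pmod8$ is tuned to make the cuspidal orders useless in exactly this configuration.
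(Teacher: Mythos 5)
Your opening reduction is sound and is in fact a cleaner packaging of the problem than the paper's: identifying $\QQ(E[2])$ with $\QQ(\sqrt{\Delta_{\min}})$, using good reduction at $2$ to force the squarefree part $m$ to be $1 \bmod 4$, and thereby reducing to $m\in\{1,-p,-q,pq\}$. But from that point on the proof does not close. The decisive problem is that you never actually prove the case $m=pq$: you state a ``goal'' and say you ``expect'' a local parity analysis at $p$ and $q$ (split at $p$, non-split at $q$, Tate parametrization) to rule it out, but no argument is given, and you yourself flag it as the delicate step. That case is not a loose end --- it is essentially the whole lemma. (Indeed, sharpening your own observation, the discriminant of the relevant quadratic is $(4a_2+1)^2-64a_4\equiv 1\pmod 8$, which already eliminates $-p$ and $-q$ since both are $\equiv 5\pmod 8$; so $m=pq$ is the only case with content.)

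The intermediate cases are also not handled by the mechanism you propose. For $m=-p$ or $-q$ you plan to choose the sign in $(1\pm w_p)$ so that \ref{GeneralizedOgg} gives an even-order cusp in $C[2]^{2-\new}\cap B$ and then invoke \ref{CuspidalProp2}. But under the hypothesis $p\equiv q\equiv 3\pmod 8$ one has $v_2(p-1)=v_2(q-1)=1$ and $v_2(p+1)=v_2(q+1)=2$, so both admissible orders $\num\bigl((p+1)(q-1)/24\bigr)$ and $\num\bigl((p-1)(q-1)/24\bigr)$ are odd (the factor $(1-w_q)$ is forced by the requirement that the element lie in $\ker(\pi)$). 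There is no sign choice producing an even order; this is exactly the degeneration you correctly diagnose for $m=pq$, and it afflicts these cases equally. Moreover this whole branch imports the odd-congruence-number hypothesis, which is not a hypothesis of the lemma. The paper's proof is entirely different and avoids all of this: it writes $E$ as $y^2+xy=x^3+a_2x^2+a_4x$, notes $\Delta=a_4^2\bigl((4a_2+1)^2-64a_4\bigr)$ with $a_4=\pm p^\gamma q^\tau$, and shows by a short case analysis (a sum-of-two-squares argument when $a_4=-1$, factorizations and congruences mod $4$ otherwise) that $(4a_2+1)^2-64a_4$ is forced to be a perfect square. No modularity, cusps, or congruence numbers enter. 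You would need to either carry out that Diophantine analysis or genuinely complete your local computation at $p$ and $q$; as written, the proof has a gap.
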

  \begin{proof}
    Notice that elliptic curves with rational $2$-torsion points
    and good reduction at $2$ have a model
    \[ E: y^2+xy=x^3+a_2x^2+a_4x.\]
    Recall that $b_2=4a_2+1$, $b_4=2a_4$, $b_6=0$, and $b_8=-a_4^2$.
    The discriminant of $E$ is 
    \[ \Delta=a_4^2 ( (4a_2+1)^2-64a_4),\]
    and we have
    \begin{equation}
        x([2]Q)={x^4-b_4x^2-b_6x-b_8 \over 4x^3+b_2x^2+2b_4+b_6},
        \label{dblform}
    \end{equation}
    for point $Q=(x,y).$ 
    Therefore $E[2]=(\ZZ/2)^2$ if and only if the cubic $4x^3+b_2x^2+2b_4+b_6$
    will split completely, which means that $a_4^2((4a_2+1)^2-64a_4)$ will be
    a perfect square.
    Assume the contrary, that is $(4a_2+1)^2-64a_4$ is not a perfect
    square. Also, since we are assuming that $E$ has potentially
    good reduction everywhere, we have that $4a_2+1$ and $a_4$ are 
    coprime to each other.
    There are few cases that we need to consider $a_4=\pm 1$, $\pm p^\alpha$
    or $\pm p^\alpha q^\beta$. 
    \begin{enumerate}
        \item If $a_4=-1$ then $(4a_2+1)^2+64=\pm p^\alpha q^\beta$.
            Therefore $p^\alpha q^\beta$ is a sum of two squares, and since they
            are both equivalent to $3 \pmod 8$ we get that $\alpha$ and
            $\beta$ must be even. Therefore $\Delta$ is a perfect square.
        \item
            If $a_4=1$ then we get 
            \[ (4a_2+1)^2-64=(4a_2+9)(4a_2-7)=\pm p^\alpha q^\beta.\]
            If $4a_2+1 \pm 8=\pm 1$ then we get $p^\alpha q^\beta=17$
            or $15$. Neither of these cases $p \equiv q \equiv 3 \pmod 8$.
            Therefore we must have $4a_2+9=p^\alpha$ and $4a_2-7=q^\beta$.
            However this forces $p^\alpha \equiv q^\beta \equiv 1 \pmod 4$,
            which implies $\alpha$ and $\beta$ are even, and hence
            $\Delta$ is a perfect square.
        \item If $a_4=\pm p^\gamma$ then we have
            $(4a_2+1)^2-64(\pm p^\gamma)=\pm q^\beta$.
            Considering the above equality modulo $4$ we get that
            $\beta$ is even.
            Therefore $\Delta=a_4^2( (4a_2+1)^2-64a_4)=p^{2\gamma} q^\beta$
            is a perfect square.
        \item If $a_4=\pm p^\gamma q^\tau$ then we get 
            $(4a_2+1)^2-64(\pm p^\gamma q^\tau)=\pm 1$. Again, considering
            the above modulo $4$ we get that the right hand side equals
            $+1$. 
            Therefore $\Delta=a_4^2( (4a_2+1)^2-64a_4)=p^{2\gamma} q^{2\tau}$
            is a perfect square.
    \end{enumerate}
    So, in all cases we get that $\Delta$ is a perfect square, and hence
    $E[2](\QQ)=(\ZZ/2)^2.$
  \end{proof}
  \begin{cor}
      Let $E$ be an elliptic curve of conductor $pq$ with odd congruence
      number. Then $L(E,1) \neq 0$, and hence $E$ has rank $0$.
  \end{cor}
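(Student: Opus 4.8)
The plan is to show that the image of the cusp $P_{pq}$ under the modular parametrization $\pi$ is nonzero, which forces $L(E,1)\neq 0$, and then to invoke the theorem of Gross--Zagier and Kolyvagin to conclude that $E(\QQ)$ has rank $0$. By Theorem \ref{modabclassification} I may assume $p\equiv \pm 3\pmod 8$ and $q\equiv 3\pmod 4$, and after relabelling that $(w_q)_*$ acts trivially while $(w_p)_*$ acts as $-1$. As in the prime level computation, the cusp $P_{pq}$ lies over $0\in\PP^1(\QQ)$ while $P_1$ lies over $i\infty$, so that $L(E,1)=2\pi i\int_0^{i\infty}f_E(z)\,dz\equiv \pi(P_{pq})\pmod{\Lambda_E}$; hence it suffices to prove $\pi(P_{pq})\neq 0$, equivalently $P_{pq}-P_1\notin\ker(\pi)=B$.

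First I would record that $\pi$ is injective on the $2$-torsion of the cuspidal group. Since $E$ has good reduction at $2$ and bad reduction at the two primes $p,q$, the Remark in Section \ref{sec22} produces a rational two-torsion point, so $E^\dual[\mm]=E[\mm]\neq 0$ by Lemma \ref{LemRib2}. Because $N=pq$ is odd there is no degeneracy map at $2$, so $C[2]^{2-\new}=C[2]$, and Proposition \ref{CuspidalProp2} gives $B\cap C[2]=0$; equivalently $\pi|_{C[2]}$ is injective. Consequently, to show $\pi(P_{pq})\neq 0$ it is enough to show that $P_{pq}-P_1$ has \emph{even} order in $C$: if $m=\ord(P_{pq}-P_1)$ is even then $\tfrac m2(P_{pq}-P_1)$ is a nonzero element of $C[2]$, which by injectivity cannot lie in $B$, so $\tfrac m2\,\pi(P_{pq}-P_1)=\pi\bigl(\tfrac m2(P_{pq}-P_1)\bigr)\neq 0$, forcing $\pi(P_{pq})\neq 0$. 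Combined with Lemmas \ref{Technical1} and \ref{technical4}, which guarantee $E[2](\QQ)=(\ZZ/2)^2$, this also identifies $\pi(P_{pq})$ as a specific nonzero rational two-torsion point of $E$.

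The heart of the matter, and the step I expect to be the main obstacle, is the claim that $\ord(P_{pq}-P_1)$ is even. I would compute this order using Lemma \ref{cusplemma2}: writing $P_{pq}-P_1$ in the basis $f_{i,k}$ and applying $\Lambda^{-1}$ yields a vector whose nonzero entries are $\pm(pq-1)$ and $\pm(p-q)$, and one reads off the order from the integrality condition together with the two parity conditions. The subtlety is that the integrality condition alone can give an odd answer when $p\equiv q\equiv 3\pmod 8$, since then $v_2(pq-1)$ and $v_2(p-q)$ are both large; it is precisely the parity conditions of Lemma \ref{cusplemma2}, which evaluate to $24n/\bigl((p-1)(q+1)\bigr)$ and $24n/\bigl((p+1)(q-1)\bigr)$, that force $v_2(\ord(P_{pq}-P_1))\geq 1$. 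Carrying out the elementary $2$-adic bookkeeping in the residue cases $p\equiv 3,5\pmod 8$ and $q\equiv 3,7\pmod 8$ (using $v_2(p^2-1)=3$ and $v_2(q-1)=1$) shows the parity condition at the factor of larger $2$-valuation is always binding, so the order is even in every case. This completes the proof that $\pi(P_{pq})\neq 0$, whence $L(E,1)\neq 0$ and $E(\QQ)$ has rank $0$.
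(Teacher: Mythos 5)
Your proof is correct, but it takes a genuinely different route from the paper's. The paper's own argument is much shorter: it invokes the fact that $E_{\tors}$ is generated by the images of the cusps, notes that $E[2](\QQ)=(\ZZ/2)^2$ by Lemmas \ref{Technical1} and \ref{technical4}, and observes that since $\pi(P_p)$ is a nonzero two-torsion point and $\pi(P_{pq})=\pi(P_p)+\pi(P_q)$, the vanishing of $\pi(P_{pq})$ would force all cuspidal images into the order-two subgroup generated by $\pi(P_p)$, contradicting the full rational two-torsion. You instead prove $\pi(P_{pq})\neq 0$ by combining Proposition \ref{CuspidalProp2} (which, given a nonzero rational two-torsion point and the odd congruence number, yields $B\cap C[2]=0$) with an explicit computation via Lemma \ref{cusplemma2} showing that $P_{pq}-P_1$ has even order; I checked your $2$-adic bookkeeping and the parity conditions $24n/\bigl((p-1)(q+1)\bigr)$ and $24n/\bigl((p+1)(q-1)\bigr)$ do force evenness in all residue cases allowed by Theorem \ref{modabclassification}. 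What your route buys is independence from the assertion that the torsion of an optimal quotient at composite level is cuspidally generated --- a Mazur-type statement the paper uses without proof outside prime level --- at the cost of a longer computation; it also needs only the existence of one nonzero rational two-torsion point (already guaranteed by Theorem \ref{thmCE1} since $pq$ is not a prime power), not the full $(\ZZ/2)^2$ structure. The paper's route is shorter but leans on that unproved cuspidal-generation input and on the earlier determination that $\pi(P_p)\neq 0$.
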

  \begin{proof}
      We've already seen that in this case $E[2](\QQ)=(\ZZ/2)^2.$ 
      Also, we know that $E_{\tors}$ is generated by the image of cusps.
      We saw that $\pi(P_p)=P$ is a two torsion point. Therefore
      $\pi(P_{pq})$ can not be $0$, otherwise $\pi(P_q)=P$ and hence
      $E_{\tors}=\ZZ/2$. Therefore $L(E,1) \neq 0$, and $E$ has
      rank $0$.
  \end{proof}

  Now we show that when $E_{\tors}$ is $\ZZ/2 \times \ZZ/4$, then 
  $E$ has conductor $15$ or $21$. First we prove a general result
  about semistable elliptic curves with $E_{\tors}=\ZZ/2 \times \ZZ/4$ and 
  good reduction at $2$.
  Specifically
  \begin{lemma}
      Let $E$ be a semistable elliptic curve with good reduction at $2$.
      $E_{\tors}=\ZZ/2 \times \ZZ/4$, and let $Q \in E(\QQ)$ is a point
      of order $4$. Then $Q$ has order $4$ in $E(\FF_2)$.
      \label{technical3}
  \end{lemma}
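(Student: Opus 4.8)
The plan is to analyze the reduction-modulo-$2$ homomorphism $\rho : E(\QQ) \to E(\FF_2)$, which is defined because $E$ has good reduction at $2$, and to pin down both its kernel and its image on the torsion subgroup $E_{\tors} = \ZZ/2 \times \ZZ/4$. Since all of $E_{\tors}$ is $2$-power torsion, $\rho(E_{\tors})$ is a $2$-group, and it suffices to show that $\rho(Q)$ has order $4$, i.e.\ that neither $\rho(Q) = 0$ nor $2Q \in \ker \rho$ can occur. Two inputs drive the argument: an upper bound on the torsion in the kernel of reduction, and a constraint on the $2$-part of $E(\FF_2)$ coming from characteristic $2$.

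For the kernel, I would use that $\ker \rho$ on $\QQ_2$-points is the group $E_1(\QQ_2) \isom \hat E(2\ZZ_2)$ of the formal group. Over $\QQ_2$ the ramification index is $e = 1$, which equals $p - 1 = 1$, the boundary case; by the standard formal-group estimate the formal logarithm is an isomorphism on $E_2(\QQ_2)$, so $E_2(\QQ_2)$ is torsion-free and the torsion of $E_1(\QQ_2)$ injects into $E_1(\QQ_2)/E_2(\QQ_2) \isom \FF_2^{+}$. Hence the torsion meeting $\ker \rho$ is at most $\ZZ/2$ and is killed by $2$. In particular $\#(\ker \rho \cap E_{\tors}) \le 2$, which already rules out $\rho(Q) = 0$ since $Q$ has order $4$.

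For the image, I would invoke that over $\QQbar$ reduced mod $2$ an elliptic curve has $2$-torsion subgroup $\ZZ/2$ (ordinary) or $0$ (supersingular); in either case $E(\FF_2)$ contains at most one point of order $2$, so its $2$-Sylow is cyclic and $E(\FF_2)$ cannot contain $(\ZZ/2)^2$. Now suppose $\rho(Q)$ had order $2$; then $2Q \in \ker \rho$, and since $\ker \rho \cap E_{\tors}$ has order at most $2$ it must equal $\langle 2Q \rangle$. Writing $Q = (0,1)$ in $\ZZ/2 \times \ZZ/4$ gives $2Q = (0,2)$, and the quotient $(\ZZ/2 \times \ZZ/4)/\langle (0,2) \rangle \isom (\ZZ/2)^2$ would embed into $E(\FF_2)$, contradicting the cyclicity of its $2$-Sylow. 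Therefore $\rho(Q)$ has order exactly $4$.

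The main obstacle is the kernel bound at $p = 2$: this is precisely the boundary ramification case $e = p - 1$, where the formal group genuinely can carry a point of order $2$, so one cannot simply assert injectivity of reduction on torsion as one would for $p \ge 3$. Getting the estimate right — that the $2$-torsion of the formal group is at most $\ZZ/2$ and that $E_2(\QQ_2)$ is torsion-free — is the crux; everything after it is elementary group theory together with the Hasse bound $\#E(\FF_2) \le 5$ and the characteristic-$2$ structure of the $2$-torsion.
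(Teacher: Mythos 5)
Your proof is correct, but it takes a genuinely different route from the paper's. The paper works with the explicit model $y^2+xy=x^3+a_2x^2+a_4x$, identifies the $x$-coordinates $0$, $4\alpha$, $\beta/4$ of the three $2$-torsion points (with $\beta/4$ the one lying in the kernel of reduction), and then uses the duplication formula to show that $x([2]Q)=\beta/4$ would force $\frac{\beta^2}{16}(16\alpha-\beta)$ to be a perfect square, which is impossible since $\beta$ and $16\alpha-\beta$ are coprime odd integers that cannot both be $\equiv 1 \pmod 4$; this computation is where the semistability hypothesis enters (to get the coprimality of $\alpha$ and $\beta$). You instead argue structurally: the torsion of the kernel of reduction $E_1(\QQ_2)$ injects into $E_1/E_2\isom\FF_2^+$ because $E_2$ is torsion-free (the boundary case $e=p-1=1$ handled correctly — the kernel can contain a point of order $2$ but nothing more), so $\ker\rho\cap E_{\tors}$ has order at most $2$; combined with the fact that $E(\FF_2)[2]$ is cyclic in characteristic $2$, the image $\rho(E_{\tors})\isom(\ZZ/2\times\ZZ/4)/\langle 2Q\rangle\isom(\ZZ/2)^2$ that would result from $\rho(Q)$ having order $\le 2$ cannot embed in $E(\FF_2)$. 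Your argument buys generality and brevity — it uses only good reduction at $2$ and the torsion structure, not semistability at the odd primes, and avoids the Weierstrass computation entirely — while the paper's computation has the side benefit of setting up the coordinates $\alpha$, $\beta$ and the identity $\Delta=\alpha^2\beta^2(16\alpha-\beta)^2$ that are reused immediately in the proof of the following lemma on conductors $15$ and $21$.
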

  \begin{proof}
    We use the same notation as lemma \ref{technical4}, so
    \[ E: y^2+xy=x^3+a_2x^2+a_4x.\]

    Since $E[2] =\ZZ/2 \times \ZZ/2$ we get that
    $(4a_2+1)^2-64a_4$ and $\Delta$ are perfect squares.
    Let $(4a_2+1)^2-64a_4=m^2$.
    The $x$ coordinate of the two torsion points are $0$, $4\alpha$, and 
    ${\beta \over 4},$ were we've chosen $m$ so that $\alpha$ and $\beta$ are 
    both (odd) integers. Furthermore, they better be coprime to each other for
    $E$ to be semistable. Note that ${\beta \over 4}$ maps to the origin
    under the reduction mod $2$ map.
    We have 
    \begin{eqnarray*}
        b_2&=& 16 \alpha+\beta, \\
        b_4&=& 2\alpha \beta, \\
        b_6&=& 0, \\
        b_8&=& \alpha^2 \beta^2,\\
        \Delta&=& \alpha^2 \beta^2 (16\alpha-\beta)^2.
    \end{eqnarray*}
    Now let $Q$ be a point of order $4$. Then $x([2]Q)=0$, $4\alpha$, or ${\beta \over 4}.$
    If $x([2]Q)={\beta \over 4},$ then we get
    \begin{eqnarray*}
        {\beta \over 4} &=& {x^4-b_4x^2-b_8 \over 4x^3+b_2x^2+2b_4x} \\
        &=& {x^4-2\alpha \beta x^2 - \alpha^2 \beta^2 \over 4x^3+(16 \alpha+\beta)x^2+4\alpha^2 \beta^2}, \\
        \Rightarrow 0 &=&  x^4+\beta x^3+(2\alpha \beta+{\beta^2 \over 4}x^2+\alpha \beta^2 x + \alpha^2 \beta ^2 \\
        &=& (x^2+{\beta \over 2} x + \alpha \beta \\
        &=& (x+{\beta \over 4})^2-{\beta^2 \over 16}+\alpha \beta.
    \end{eqnarray*}
    Therefore we must have $-{\beta^2 \over 16}+\alpha \beta={\beta^2 \over 16}(-\beta+16\alpha)$
    is a perfect square. Since $\alpha$ and $\beta$ are coprime to each other, we get
    that $\beta$ and $-\beta+16 \alpha$ are both perfect square. This can't happen,
    since only one of those two can be congruent to $1 \pmod 4$.
    Therefore $x([2]Q)=0$ or $4\alpha$. In either case, under the reduction
    mod $2$ map we get that $Q$ has order $4$, which is the desired result.
  \end{proof}

  \begin{lemma}
	Let $E$ be an elliptic curve of conductor $pq$ with $E_{\tors}=\ZZ/2 \times \ZZ/4$.
	Then $pq=15$ or $21$.of conductor $N=pq$ and
	\label{Technical2}
  \end{lemma}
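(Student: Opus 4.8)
The plan is to combine the explicit Weierstrass model of Lemma \ref{technical4}, a reduction argument modulo $3$, and an elementary Diophantine analysis. First I would record the model and torsion data. Since $E$ has conductor $pq$ with $p,q$ odd it is semistable with good reduction at $2$, so as in the set-up of Lemmas \ref{technical3} and \ref{technical4} we may take
\[ E: y^2 + xy = x^3 + a_2 x^2 + a_4 x, \]
with full rational $2$-torsion whose nonzero $x$-coordinates are $4\alpha$ and $\beta/4$ for coprime odd integers $\alpha,\beta$, and with $\Delta = \alpha^2\beta^2(16\alpha-\beta)^2$. A direct check shows $\alpha$, $\beta$, $16\alpha-\beta$ are pairwise coprime (each $\gcd$ reduces to $\gcd(\alpha,\beta)=1$, using that $\beta$ is odd). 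As $E$ is semistable of conductor $pq$, the radical of $\Delta$ is $pq$, so these three pairwise coprime factors together involve exactly the two primes $p,q$; pairwise coprimality then forces at least one of $\alpha$, $\beta$, $16\alpha-\beta$ to equal $\pm 1$.

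Next I would pin down one of the primes using the order-$8$ torsion group $\ZZ/2\times\ZZ/4$. For every odd prime $\ell \nmid pq$ the curve has good reduction at $\ell$, so reduction is injective on the prime-to-$\ell$ torsion and $8 \mid \#E(\FF_\ell)$. Taking $\ell = 3$ and invoking the Hasse bound $\#E(\FF_3) \le 7$ gives a contradiction unless $3 \nmid pq$ fails, that is unless $3 \mid pq$. Hence one of the two primes is $3$, which is compatible with both congruence slots of Theorem \ref{modabclassification} ($3 \equiv 3 \pmod 8$ and $3 \equiv 3 \pmod 4$). Write the other prime as $r$.

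Now I would extract the square condition coming from a rational point $Q$ of order $4$. As in the proof of Lemma \ref{technical3}, $2Q$ cannot be the $2$-torsion point with $x$-coordinate $\beta/4$, so $2Q$ is one of the other two $2$-torsion points; translating that point to the origin, the classical halving criterion says the two relevant root-differences are perfect squares. Up to sign and powers of $2$ this means that two of $\alpha$, $\beta$, $16\alpha-\beta$ are (plus or minus) perfect squares. Combining this with the structure of the previous step — one factor is $\pm 1$, the factors are pairwise coprime, two are $\pm$squares, and $3 \mid pq$ — reduces the problem to a short list of equations of the shape
\[ 16\alpha - \beta = \pm 1, \qquad \beta = 16u^2 - w^2, \qquad v^2 - 16 = \pm\,(\text{prime power supported on } \{3,r\}). \]
Solving these by factoring $v^2 - 16 = (v-4)(v+4)$ and using the congruences $p \equiv \pm 3 \pmod 8$, $q \equiv 3 \pmod 4$ to discard residue-incompatible factorizations, the only surviving solutions are the small ones, giving $\{p,q\} = \{3,5\}$ or $\{3,7\}$, that is $pq = 15$ or $21$.

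The hardest part is this last step: bounding the Diophantine equations so that no large solution survives. The danger is that an equation such as $v^2 - 16 = \pm 3^a r^b$ could a priori have solutions with $r$ large, and ruling these out rigorously is exactly where the quadratic-residue restrictions from Theorem \ref{modabclassification} must be used to kill the incompatible sign and parity patterns, after which only finitely many small $v$ remain to be checked directly. The bookkeeping of signs — which of $\alpha$, $\beta$, $16\alpha-\beta$ is the unit and which pair are the squares — must also be carried out in full, but I expect each branch to collapse to one of the elementary equations above.
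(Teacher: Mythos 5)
Your setup (the model with full rational $2$-torsion, the pairwise-coprime factorization of $\Delta$, the Hasse-bound argument at $3$ showing $3\mid pq$ --- which the paper's own remark notes but does not actually need, and the observation that the order-$4$ point forces a square condition) is all sound. But the proof has a genuine gap exactly where you flag it: the finiteness of the Diophantine step. The equations you propose to land on, of the shape $v^2-16=\pm 3^a r^b$ with $r$ an unknown prime, do \emph{not} have only small solutions, and the congruences of Theorem \ref{modabclassification} do not rescue you. Factoring $(v-4)(v+4)=\pm 3^a r^b$ with the two factors odd and coprime gives $|3^a-r^b|=8$, which admits $3^3-19=8$ (so $pq=57$, with $19\equiv 3\pmod 8$) and $11-3=8$ (so $pq=33$, with $11\equiv 3\pmod 8$); both pass every congruence test you invoke. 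So the argument as outlined cannot terminate.

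The missing idea is that the $4$-torsion condition gives you strictly more than ``two of the three factors are squares.'' In the paper's normalization one gets that $a_4$ is a perfect square, and since $\Delta=a_4^2\bigl((4a_2+1)^2-64a_4\bigr)=p^{2r}q^{2s}$ is a square with $a_4$ supported on one prime, the complementary factor $(4a_2+1)^2-64a_4$ is forced to be a perfect square that is a power of a \emph{single} prime, e.g.\ $q^{2s}$ in the case $a_4=p^r$. That is the constraint with teeth: writing $(4a_2+1)^2-64p^{r}=q^{2s}$ and factoring twice, one reaches $(q^s-1)(q^s+1)=16p^{r/2}$, where $\gcd(q^s-1,q^s+1)=2$ forces one of the two factors to equal $2$ or $8$ --- an absolute bound --- and hence $q^s\in\{3,7,9\}$, giving only $pq=15$ or $21$. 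Your version, in which the right-hand side is merely ``supported on $\{3,r\}$'' rather than a pure even prime power, loses precisely the coprime-factors-with-bounded-gcd mechanism that makes the solution set finite. (A secondary point: the lemma as stated assumes only the torsion structure and conductor $pq$, not odd modular degree, so Theorem \ref{modabclassification} is not available in the generality of the statement; but as shown above, even with it the argument does not close.)
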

  \begin{proof}
    Using the same notation as lemma \ref{technical3},
    let $0$, $4\alpha$ and $\beta\over 4$ be the $x$-coordinate of
    $2$ torsion points of $E$.
    Let $Q$ be a point in $E_{\tors}$ of order $4$.
	If $x([2]Q)=4\alpha$, we can do a change of coordinate to find another model
	with $x([2]Q')=0$. Therefore without loss of generality assume that $x([2]Q)=0$.
    Using the double point formula \ref{dblform} we get that
    $(x^2-a_4)=0$, therefore $a_4$ must be a perfect square.
	Since $\Delta=a_4^2\left(  (4a_2+1)^2-64a_4 \right)=p^{2r} q^{2s},$ we
	get that $a_4=1$, $p^r$, $q^s$, or $p^rq^s$.
	\begin{enumerate}
	\item[$a_4=1$]
		In this case we get $(4a_2+1)^2-64=p^{2r}q^{2s}=m^2.$ Therefore 
		\[ (4a_2+1+m)(4a_2+1-m)=64. \]
		Choosing $m$ with the correct sign, we get $4a_2+1-m=\pm 2$ and 
		$4a_2+1+m=\pm 32$ which gives us $m=\pm 15$. This forces the conductor
		to be $15$.
	\item [$a_4=p^r$ ]
		First note that $r$ is even. In this case we get $(4a_2+1)^2-64p^{r}=q^{2s}$
		Factoring the right side we get
		\[ (4a_2+1+8p^{r/2})(4a_2+1-8p^{r/2})=q^{2s},\]
		which implies $4a_2+1-\pm 8p^{r/2}=1$ and $4a_2+1\pm 8p^{r/2}=q^{2s}$.
		Subtracting we get $q^{2s}-1=\pm 16 p^{r /2}.$ Since $q^{2s}>1$ we have
		the positive sign.
		Factoring the right side we get $q^s \pm 1= 8$ or $2$, and 
		$q^s-\pm 1= 2p^{r /2}$ or $8p^{r/2}$. Considering all cases
		we get $q^s=7$ or $9$ with $p^r=9$ or $25$. This forces the conductor
		to be either $15$ or $21$.
	\item [$a_4=p^rq^s$]
		Then we have $(4a_2+1)^2-64a_4=1$. However, there are no positive
		perfect squares that differ by $1$, therefore this case can't happen.
	\end{enumerate}
	This completes our proof.
  \end{proof}

  \begin{remark}
	The previous lemma is long, and tedious. It certainly feels like certain
	parts can be shortened, although I haven't figured out how. Note that
	it is really easy to show that $3$ must divide the conductor by the Hasse-Weil
	bound. Unfortunately I don't see how that can simplify the argument.
  \end{remark}

  \begin{thm}
	Assume $E$ is an elliptic curve with odd modular degree. Furthermore, assume
	that conductor of $E$ is $pq$ with $pq > 21$. Then $p, q \equiv 3 \pmod 8$.
  \end{thm}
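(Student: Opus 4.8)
The plan is to argue by contradiction: assuming $p \not\equiv 3$ or $q \not\equiv 3 \pmod 8$, I would produce either a nonzero element of $B \cap C[2]$ (forcing an even congruence number, contradicting the hypothesis) or else force $pq \in \{15,21\}$. First I would record the setup inherited from this section. Since $4 \nmid pq$, odd modular degree is equivalent to odd congruence number by \cite{ARS}, so all of chapter \ref{chap2} applies; in particular theorem \ref{modabclassification} gives $p \equiv \pm 3 \pmod 8$ and $q \equiv 3 \pmod 4$, and I may normalize so that $w_q$ acts trivially and $w_p$ acts as $-1$. Writing $\alb(w_q)(z) = a + z$ with $a \in E[2]$ and $\alb(w_p)(z) = b - z$, the cusp images are $\pi(P_q) = a$, $\pi(P_p) = b$, and $\pi(P_{pq}) = a + b$. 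Now suppose $(p,q) \not\equiv (3,3) \pmod 8$; lemma \ref{Technical1} then gives $E[2](\QQ) = (\ZZ/2)^2$.

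Next I would pin down the torsion. As $E$ has good reduction at $2$, the Hasse--Weil bound gives $\#E(\FF_2) \le 5$, and the odd part of $E_\tors$ injects under reduction; since the full two-torsion already has image of order at least $2$, any nontrivial odd torsion would force $\#E(\FF_2) \ge 6$. Hence $E_\tors$ is a $2$-group, and I would split on the order of $b = \pi(P_p)$. The first branch is $b \in E[2]$. Here I would consider the cuspidal divisor $y = (1 - w_p)(1 + w_q)P_1 = P_1 + P_q - P_p - P_{pq} \in C$, which satisfies $\pi(y) = a - b - (a+b) = -2b = 0$, so $y \in B = \ker\pi$. By proposition \ref{GeneralizedOgg} its order is $\num\!\left(\frac{(p-1)(q+1)}{24}\right)$, and a $2$-adic valuation check (using $v_2(q-1)=1$, together with the values of $v_2(p\pm1)$ and $v_2(q+1)$ dictated by $p,q \bmod 8$) shows this order is even in exactly the excluded cases $(p,q) \not\equiv (3,3)\pmod 8$. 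Then a suitable multiple of $y$ is a nonzero element of $B \cap C[2]$; since $2 \nmid N$ one has $C[2]^{2\text{-new}} = C[2]$, and because $E^\dual[\mm] = E[\mm] \neq 0$ (lemma \ref{LemRib2}) proposition \ref{CuspidalProp2} applies and is violated, forcing an even congruence number.

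The second branch is $\ord(b) \ge 4$. Then $b$ or $2b$ is a rational point of order $4$, so $E$ has both full two-torsion and a rational $4$-torsion point; the computation in the proof of lemma \ref{Technical2} (which uses only these two facts together with the conductor being the squarefree product $pq$) then forces $pq \in \{15, 21\}$, contradicting $pq > 21$. Combining the two branches rules out $(p,q)\not\equiv(3,3)\pmod 8$, which proves the theorem. I note that this dichotomy correctly leaves $15$ and $21$ as the only escapees, via the second branch.

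I expect the main obstacle to be the bookkeeping in the first branch: verifying that $\ord(y)$ is even precisely in the excluded congruence classes, and justifying carefully that for odd level the $2$-new condition in proposition \ref{CuspidalProp2} is vacuous, so that the half-multiple of $y$ genuinely obstructs an odd congruence number. A secondary point requiring care is confirming that the argument of lemma \ref{Technical2} needs only the existence of one $4$-torsion point together with full two-torsion, rather than the exact torsion subgroup $\ZZ/2 \times \ZZ/4$, so that it still applies in the subcase $\ord(b) = 8$.
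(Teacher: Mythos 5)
Your proof is correct and takes essentially the same route as the paper: the same cuspidal divisors $(1\pm w_p)(1\pm w_q)P_1$ with orders $\num\left(\frac{(p\pm1)(q\pm1)}{24}\right)$, forced to be odd via proposition \ref{CuspidalProp2} and lemma \ref{LemRib2}, combined with lemmas \ref{Technical1} and \ref{Technical2} to control the torsion. Your explicit branch on the order of $\pi(P_p)$ is just a more careful justification of the paper's opening assertion that $E_{\tors}=\ZZ/2\times\ZZ/2$, and importing two of the three congruence conditions from theorem \ref{modabclassification} rather than rederiving them from the divisors $D^{+-}$ and $D^{--}$ is a cosmetic difference.
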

  \begin{proof}
	Since $E_{tors}=\ZZ/2 \times \ZZ/2$ we get that the cusps
	\begin{eqnarray*}
		D^{+-}&=& P_1+P_p-P_q-P_{pq}, \\
		D^{-+}&=& P_1-P_p+P_q-P_{pq}, \\
		D^{--}&=& P_1-P_p-P_q+P_{pq}.
	\end{eqnarray*}
	One can check that the order $D^{ab}$ is $\num\left( {(p+a)(q+b) \over 24} \right)$.
	Furthermore since $P_i$'s all map to a two torsion point and since
	$P_{pq}$ maps to the sum of the images of $P_p$ and $P_q$, we get that
	each $D^{ab}$ vanishes in $E$. Therefore, by theorem \cite{ARS} we get
	that $D^{ab}$ all have odd orders in $J_0(N)$.
	
	Considering $D^{--}$ we get that $16 \ndiv (p-1)(q-1)$. Therefore either $p$ or
	$q$ is congruent to $3 \pmod 4$. Assume without loss of generality that
	$p \equiv 3 \pmod 4$.
	Now considering $D^{+-}$ we get that $16 \ndiv (p+1)(q-1)$. This gives us
	that $p \equiv 3 \pmod 8$ and $q \equiv 3 \pmod 4$.
	Finally considering $D^{-+}$ we get that $16 \ndiv (p-1)(q+1)$, which implies
	$q \equiv 3 \pmod 8$.
  \end{proof}

\section{Level $N \equiv 0 \pmod 4$}
    When $E$ has additive reduction at $2$, we have to work quite a bit harder
    to get the results that we expect. In this case, as we mentioned before,
    the modular degree can be odd, even if the congruence number is even.
    Furthermore, we can not apply the methods of \ref{CMcase} unless
    $16 | N$, and as such there are many more cases that we need to 
    consider. However we still know that if an odd prime $p |N$, then by 
    thereom \ref{thmCE1} we get that $E$ has a rational two torsion point.
    Furthermore, if two distinct odd primes $p$ and $q$ divide $N$ then
    we get that $E[2](\QQ) \isom (\ZZ/2)^2,$ and finally no more than
    two odd primes can divide the conductor, since $\#E[2](\QQ) | 4$.

    In this section, we study these cases by considering elliptic curves
    with the desired torsion structure and conductor. Throughout this
    section, we will assume that $4 | N$.

    \subsection{Case $N=2^r pq$}
        We will first study what happens when $E$ has bad reduction at
        two distinct primes $p$ and $q$, and having odd congruence number.
        By results of section \ref{CMcase} we get that $v_2(N)<4$, otherwise
        $E$ must be a power of $2$.
        By theorem \ref{thmCE1} we have that
        $E[2](\QQ)=(\ZZ/2)^2$, and thus $E$ has a minimal model
        \[ E: y^2=x(x+a)(x+b),\]
        with discriminant $\Delta=2^4(ab(a-b))^2$.
        If needed, we can translate $x$ so that $b$ is even, and
        $a \neq \pm 1$.
        Since $E$ has multiplicative reduction at $p$ and $q$, we get
        that $\gcd(a,b,(a-b))=2^u$ for some $u$.
        There are few cases that we need to consider.
        Note that if $2|\gcd(a,b)$, then we may substitute
        $x'=2x$ and $y'=4y$ to get
        \[ E: 2(y')^2=x'(x'+a/2)(x'+b/2),\]
        which is just a twist of 
        \[ E': y^2=x(x+a/2)(x+b/2).\]
        Since conductor of $E'$ divides $E$,
        we get that $E$ will have even congruence number.
        Therefore, we have two cases that we need to consider: either
        $a=\pm p^\alpha q^\beta$ with $\alpha$ and $\beta>0$ and 
        $b=\pm 2^\gamma$ or 
        $a=\pm p^\alpha$ and $b=\pm 2^\gamma q^\beta$.
        We apply Tate's algorithm to calculate the conductor of such
        elliptic curves in each case.
        \begin{itemize}
            \item[Case 1] 
                Assume $a=s_1 p^\alpha q^\beta$ and $b=s_2 2^\gamma$,
                were $s_i^2=1$.
                Furthermore, assume that $\alpha, \beta>0$.
                Then using the notation of \cite{AECII} we 
                $a_2=(s_1 p^\alpha q^\beta + s_2 2^\gamma)$, 
                $a_4=s_1s_2 p^\alpha q^\beta w^\gamma$, and 
                $a_1=a_3=a_6=0$.
                Furthermore, 
                $\Delta=2^{4+2\gamma}p^{2\alpha}q^{2\beta}(p^\alpha q^\beta-s_1s_22^\gamma)^2.$
                Applying Tate's algorithm as presented in \cite{AECII}, page
                365, we see that when $\gamma=0$ or $1$
                then we end up in step 4, and $E$ will have Kodaira type
                $III$, and $v_2(N)=v_2(\Delta)-1 \geq 5$.
                Therefore $\gamma>1$. In that case, if $s_1=s_2$ then
                we end up in step 7, and $E$ will have Kodaira type
                $I_n^*$, were $n=2(\gamma-2)$. In this case 
                \[v(N)=v(\Delta)-n-4=4+2\gamma-2(\gamma-2)-4=4.\]
                Again, this contradicts the odd congruence number assumption.
                If $s_1=-s_2$ then if $\gamma=2$ we have type $I_n^*$
                again for $n=2(\gamma-2)$ which gives us $v(N)=4$.
                If $\gamma>3$ we get that $E$ has type $III^*$ and 
                $v(N)=3$. This does not contradict our assumption
                for odd congruence directly. However in this case,
                since $\Delta=2^* p^* q^*$ we get that
                $p^\alpha q^\beta-s_1s_2 8=\pm 1$. Therefore
                we get $p^\alpha q^\beta=9$ or $7$, which can't happen
                by our initial assumption.
                Therefore $\gamma\geq 4$. However, in this case our
                elliptic curve was not minimal, and changing coordinates
                we get that $E$ in fact has semistable reduction at $2$.

            \item[Case 2] 
                Assume $a=s_1 p^\alpha$ and $b=s_2 2^\gamma q^\beta$,
                were $s_i^2=1$.
                We get that
                $\Delta=2^{4+2\gamma}p^{2\alpha}q^{2\beta}(p^\alpha -s_1s_22^\gamma)^2.$
                Again, applying Tate's algorithm we get that if
                $\gamma=0,1$ then we are in type $III$ and $v_2(N)=5$,
                which is not possible.
                So $\gamma \geq 2$. If $s_1 p^\alpha \equiv 3 \pmod 4$
                then the Kodaira symbol for $E$ is $I_n^*$ were
                $n=2(\gamma-2)$ and $v(N)=4$, which contradicts
                the odd congruence number assumption.
                If $s_1 p^\alpha \equiv 1 \pmod 4$ and
                $\gamma>3$, then our elliptic curve was not minimal,
                and with a change of coordinate we get that $E$ in
                fact had semistable reduction.
                However, when 
                $\gamma=2$ we end up in step 7, with Kodaira symbol
                $I_1^*$ and 
                $v(N)=v(\Delta)-4-1=3$. Similarly if $\gamma=3$ we end
                up in step 9, with Kodaira symbol $III^*$ and 
                $v(N)=3$. Unfortunately, we can't rule out either of
                these cases using our techniques, and all we can say
                at this point is that
                $|p^\alpha-q^\beta|=4$ or $8$.

                However, one can check that $|p^\alpha-q^\beta|=4$
                then $E$ will have potentially good reduction.
                That means after making a base change to $K$,
                $E$ will attain good reduction. However, the
                degree of the map $X_0(N) \rightarrow E$ will not
                change, and now by reducing this map modulo a prime
                above $2$, we get that the Albanese induced action
                of one of Atkin-Lehner involutions must be trivial.
                Therefore the degree of the map must be even.
                Therefore, the only case that we can't deal with
                is $|p^\alpha-q^\beta|=8$.
        \end{itemize}

        We summarize the results of the above calculation in the following
        \begin{prop}
            If $E$ is an elliptic curve with odd congruence number and
            conductor $2^r pq$, then $f=3$ and $E$ has the form
            \[ E: y^2=x(x-s p^\alpha)(x-s q^\beta),\]
            for some $s^2=1$ and $\alpha$, $\beta$ positive integers,
            and $|p^\alpha-q^\beta|=8$.
        \end{prop}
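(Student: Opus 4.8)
The plan is to collect the case analysis carried out above into the stated normal form. First I would invoke Theorem \ref{thmCE1}: since $E$ has odd congruence number and two distinct odd primes divide its conductor, $E[2](\QQ) \isom (\ZZ/2)^2$, so $E$ admits a model $y^2 = x(x+a)(x+b)$ with $\Delta = 2^4\bigl(ab(a-b)\bigr)^2$. By Section \ref{CMcase} we have $16 \ndiv N$, hence $v_2(N) = r \leq 3$, and the whole point is to pin $r$ to exactly $3$. Translating $x$ I may assume $b$ is even and $a \neq \pm 1$, and the twisting argument shows that if $2 \mid \gcd(a,b)$ then $E$ is a quadratic twist of a curve whose conductor strictly divides $N$, which forces an even congruence number. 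This reduces the problem to the two cases $a = \pm p^\alpha q^\beta,\ b = \pm 2^\gamma$ (Case 1) and $a = \pm p^\alpha,\ b = \pm 2^\gamma q^\beta$ (Case 2).

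Next I would run Tate's algorithm (\cite{AECII}, p.~365) on each case, reading off the Kodaira type and the conductor exponent $f = v_2(N)$ as a function of $\gamma$ and the signs $s_1,s_2$. In Case 1 every branch is excluded: small $\gamma$ lands in type $III$ with $f \geq 5$; the branches $s_1 = s_2$ (and the small-$\gamma$ part of $s_1 = -s_2$) give type $I_n^*$ with $f = 4$; and the last branch either forces $p^\alpha q^\beta \in \{7,9\}$, impossible under the congruence constraints on $p,q$, or produces a non-minimal model that is in fact semistable at $2$. In Case 2 the same computation kills every branch except two: when $s_1 p^\alpha \equiv 1 \pmod 4$ with $\gamma = 2$ one gets type $I_1^*$ and $f = 3$, and with $\gamma = 3$ one gets type $III^*$ and $f = 3$. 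Both surviving branches give $r = 3$ and a curve of the asserted shape $y^2 = x(x - s p^\alpha)(x - s q^\beta)$; comparing $v_2(\Delta) = 4 + 2\gamma$ against the surviving conductor exponents then forces $|p^\alpha - q^\beta| \in \{4,8\}$.

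The final and most delicate step is to discard the value $4$. If $|p^\alpha - q^\beta| = 4$ one checks that $E$ acquires good reduction over a ramified extension $K/\QQ_2$; since the degree of the modular parametrization $X_0(N) \to E$ is unchanged under base change, reducing this map modulo a prime above $2$ makes the Albanese induced action of one of the Atkin--Lehner involutions trivial, whence Lemma \ref{lemCE} forces the geometric congruence number to be even, contradicting our hypothesis. Hence $|p^\alpha - q^\beta| = 8$, which completes the proof. The genuinely bulky part is the Tate-algorithm bookkeeping over the triples $(\gamma, s_1, s_2)$; the real obstacle, and the place where the odd-congruence hypothesis is used through the Atkin--Lehner action rather than just through Tate's algorithm, is exactly the potential-good-reduction argument eliminating $|p^\alpha - q^\beta| = 4$.
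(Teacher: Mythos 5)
Your proposal follows the paper's own argument essentially step for step: full rational $2$-torsion from Theorem \ref{thmCE1}, the bound $v_2(N)<4$ from Section \ref{CMcase}, the reduction to the two factorizations of $(a,b)$ via the twisting argument, the Tate's-algorithm bookkeeping that eliminates all of Case 1 and all but the $\gamma=2,3$ branches of Case 2, and finally the potential-good-reduction/Atkin--Lehner argument to rule out $|p^\alpha-q^\beta|=4$. This is the same proof, correctly reconstructed, including the identification of the last step as the one place where the odd-congruence hypothesis enters through Lemma \ref{lemCE} rather than through Tate's algorithm.
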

        We remark that we present $E$ in a slightly different form than
        what we used in our calculation, since it makes the proposition
        easier to read.
        We don't expect such elliptic curves to actually exist, and in
        fact Stein and Watkins's computations and conjecture suggests that
        there are no such elliptic curves.
        One can probably come up with a complete proof of this result
        by studying the cuspidal subgroup of $J_0(N)$ when $N=8pq$,
        and generalizing the results in section \ref{sec23}.

    \subsection{Case $N=2^rp$}
        In this case, we have that $E$ has a rational two torsion point,
        and therefore we can use Ivorra's classification of such 
        elliptic curves. When $r=2$, then we get that $p=m^2+4$ for
        some integer $m$ and
        \[ E: y^2=x(x^2+mx-1).\]
        We conjecture that infinitely many such elliptic curves have
        odd modular degree. 

        When $r=3$, again Ivorra has a classificaion, and we get 
        that for $p>31$ we are in 
        one of the following cases:
        \begin{enumerate}
            \item The integer $p-16$ is a square and $E$ is isogeneous to
                \[ y^2=x^3+\sqrt{p-16} x^2-4x,\]
            \item the integer $p-32$ is a square and $E$ is isogeneous to
                \[ y^2=x^3+\sqrt{p-32}x^2-8x,\]
            \item the integer $p+32$ is a square and $E$ is isogeneous to
                \[ y^2=x^3+\sqrt{p+32}x^2+8,\]
        \end{enumerate}
        By searching Cremona's database for elliptic curves of
        conductore less than $30000$ we only find elliptic curve $24A$
        of odd modular degree. We conjecture that $24A$ is the only 
        elliptic curve of odd modular degree with $8$ dividing its
        conductor.

\clearpage

\bibliographystyle{plain}
\bibliography{refs}

\end{document}